\documentclass[a4paper]{amsart}
\usepackage{lmodern}
\usepackage{amsthm}
\usepackage{amsmath, amsthm}
\usepackage{amssymb}
\usepackage{setspace}
\usepackage{bbm}
\usepackage{comment}

\usepackage{hyperref}
\author[I.~Giron]{Itamar Giron}
\address[Itamar Giron]{Racah Institute of Physics \\
Edmond J. Safra Campus \\
The Hebrew University of Jerusalem \\
Givat Ram. Jerusalem, 9190401, Israel}
\email{itamar.giron@mail.huji.ac.il}
\thanks{Most of the results of this paper were part of the first author master thesis.}
\author[Y.~Hayut]{Yair Hayut}
\address[Yair Hayut]{Einstein Institute of Mathematics \\
Edmond J. Safra Campus \\
The Hebrew University of Jerusalem \\
Givat Ram. Jerusalem, 9190401, Israel}
\email{yair.hayut@mail.huji.ac.il}
\thanks{The second author was partially supported by the Israel Science Foundation grant 1967/21}

\onehalfspacing

\newtheorem{theorem}{Theorem}[section]
\newtheorem{definition}[theorem]{Defintion}
\newtheorem{lemma}[theorem]{Lemma}
\newtheorem{claim}[theorem]{Claim}
\newtheorem{proposition}[theorem]{Proposition}
\newtheorem{corollary}[theorem]{Corollary}
\newtheorem{notation}[theorem]{Notation}
\newtheorem{remark}[theorem]{Remark}
\newtheorem{question}[theorem]{Question}
\newtheorem*{theorem*}{Theorem}
\DeclareMathOperator{\dom}{dom}
\DeclareMathOperator{\range}{range}
\DeclareMathOperator{\level}{Lev}
\DeclareMathOperator{\height}{height}
\DeclareMathOperator{\proj}{proj}
\DeclareMathOperator{\Lev}{Lev}
\newcommand{\CH}{\mathrm{CH}}


\begin{document}




\date{}
\title{Sealed Kurepa Trees}
\begin{abstract}
In this paper we investigate the problem of the distributivity of Kurepa trees. We show that it is consistent that there are Kurepa trees and for every Kurepa tree there is a small forcing notion which adds a branch to it without collapsing cardinals. On the other hand, we derive a proper forcing notion for making an arbitrary Kurepa tree into a non-distributive tree without collapsing $\aleph_1$ and $\aleph_2$. 
\end{abstract}
\maketitle


\section{Introduction}

In 1920, Suslin asked whether a Suslin line exists. While Aronszajn could not solve Suslin's problem, he did manage to construct a related linear order --- an Aronszajn line. In \cite{Kurepa}, both objects were translated to trees of height $\omega_1$: Aronszajn trees, which are $\omega_1$-trees without uncountable chains and Suslin trees, which are $\omega_1$-trees without uncountable chains or antichains. The tree that Aronjszajn essentially constructed was a \emph{special Aronszajn tree}, meaning that there is a function $f \colon T \to \omega$ which is injective on chains. This is a strong, and upwards absolute, witness for the lack of uncountable chains in the tree. 

In \cite{BaumgartnerMalitzReinhardt}, Baumgartner showed that there is a c.c.c.\ forcing that forces an arbitrary $\omega_1$-Aronszajn tree to be special. While this forcing can be generalized to an $\omega_1$-tree with at most $\aleph_1$ many branches (thus, makes the set of branches upwards absolute), there is no known parallel forcing that works for $\omega_1$-Kurepa trees. 

Our version for a special Kurepa tree is as follows.
\begin{definition}
    A Kurepa tree $T$ in a model $M$ is called \textbf{sealed} if $\forall\mathbb{P}\in M$ such that 
    \begin{equation*}
        \mathbb{P}\Vdash \exists \text{ a new branch for } T
    \end{equation*}

    Then $\mathbb{P}$ collapses $\omega_1$ or $\omega_2$.
\end{definition}
This definition intend to rule out possible forcing notions such as Woodin's stationary tower. While fully sealed Kurepa trees were constructed, see \cite{Poor} and \cite{HayutMuller}, it is open whether one can seal an arbitrary Kurepa tree using forcing, and the main result of the paper is a step towards this goal. 

This paper is organized as follows. In Section \ref{section:preliminaries} we provide some preliminaries and basic definitions that we will use. 

In Section \ref{section:distributive-trees}, we prove that assuming the existence of an inaccessible cardinal there exists a model in which for every Kurepa tree there exists a forcing-notion of cardinality $\aleph_1$ which adds a branch to it without collapsing $\aleph_1$ and $\aleph_2$. Moreover, in this model $\diamondsuit^+$ can hold.

The main result of this paper is a partial parallel of Baumgartner's theorem on specializing Aronszajn trees, that appears in Section \ref{section:sealing-trees-forcing}:
\begin{theorem*}
    Let $T$ be a Kurepa tree then there is a proper forcing notion that makes $T$ non-distributive, and preserves $\aleph_1$ and $\aleph_2$.
\end{theorem*}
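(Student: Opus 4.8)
The plan is to force the canonical tree forcing $\mathbb{T}=(T,{>_T})$ --- whose conditions are the nodes of $T$, ordered so that a stronger condition lies higher in the tree and whose generic object is a cofinal branch --- to fail $\sigma$-distributivity in the extension; equivalently, to arrange that in $V^{\mathbb{P}}$ the forcing $\mathbb{T}$ adds a new real. Concretely, $\mathbb{P}$ will add a colouring $c\colon T\to 2$ together with a ground-model increasing sequence of levels $\langle \xi_n : n<\omega\rangle$, and non-distributivity will be witnessed by the real $r(n)=c(b(\xi_n))$ read off the $\mathbb{T}$-generic branch $b$, provided $r$ is forced to lie outside $V^{\mathbb{P}}$.

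The structural heart of the argument, which I would isolate first as a lemma in $V$, is that a Kurepa tree splits cofinally above some node. A Kurepa tree cannot have a dense set of nodes above which it is eventually a single chain: if $s$ is such an ``eventually linear'' node then the branches through $s$ are determined by $s$, and the map sending a cofinal branch to an eventually linear node lying on it is injective, so density of these nodes would bound the set of cofinal branches by $|T|=\aleph_1$, contradicting that $T$ is Kurepa. Hence there is a node $t_0$ every extension of which splits cofinally, and, the levels of $T$ being countable, one may then fix in $V$ an increasing sequence $\langle\xi_n\rangle$ such that every node at level $\xi_n$ extending $t_0$ already has at least two extensions at level $\xi_{n+1}$. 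This uniform splitting is precisely what makes the coded real new: for any $\rho\in V^{\mathbb{P}}\cap 2^\omega$, genericity of $c$ together with the splitting makes the set $\{t\geq_T t_0 : c(t\restriction\xi_n)\neq\rho(n)\text{ for some }n\}$ dense in $\mathbb{T}$ below $t_0$, so $\mathbb{T}\Vdash r\neq\check\rho$, and therefore $\mathbb{T}$ forces $r$ to be a new real.

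It remains to add such a colouring by a forcing that is proper and preserves $\aleph_1$ and $\aleph_2$. The finite-approximation colouring has size $\aleph_1$, hence is $\aleph_2$-c.c.\ and preserves $\aleph_2$, and being c.c.c.\ is proper and preserves $\aleph_1$; combined with the density computation above this already yields a non-distributive $T$. The reason the theorem is stated for a genuinely \emph{proper} (rather than c.c.c.) forcing is that for the sealing program one wants the finer variant in which $\mathbb{P}$ does not add reals and does not add a cofinal branch to $T$, so that making $T$ non-distributive is a structural change and not an artefact of freshly added Cohen reals. For that variant I would replace the plain colouring by a side-condition forcing, whose working parts are finite approximations to $c$ (or to a specialization-type function injective on chains) and whose side conditions are finite $\in$-chains of countable elementary submodels of some $H(\theta)$; conditions are finite objects over a structure of size $\aleph_1$, so $|\mathbb{P}|=\aleph_1$ and $\aleph_2$ is preserved automatically, while properness (hence preservation of $\aleph_1$) is proved in the usual way, adjoining a model $M\prec H(\theta)$ with $T,\mathbb{P}\in M$ to the side conditions of a condition $p\in M$ and showing the result is $(M,\mathbb{P})$-generic.

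The main obstacle is exactly the amalgamation lemma underlying this last step in the presence of the tree's uncountable branches. This is the same phenomenon that obstructs a direct transfer of Baumgartner's theorem: a Kurepa tree has $\aleph_2$ cofinal branches, and along a branch one builds uncountable antichains, so the chain-condition argument collapses, and a \emph{total} specialization would inject an uncountable branch into $\omega$ and collapse $\aleph_1$. The side conditions must therefore thread the finitely many nodes one adds through the ordinals $M\cap\omega_1$ so as to extend into a prescribed dense set while respecting the constraints carried along every branch that $M$ sees, and doing this without forcing the generic function to become injective on a whole branch --- so that $\aleph_1$ survives --- is the delicate point that replaces Baumgartner's chain-condition counting.
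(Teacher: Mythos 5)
Your proposal splits into two parts, and they fare very differently. The first part (a Cohen colouring $c\colon T\to 2$ added by finite conditions, with the new real read off the generic branch at a fixed sequence of splitting levels) is a genuinely different and far more elementary route to the \emph{literal} statement than the paper's: a c.c.c.\ forcing is proper and preserves all cardinals, and your density argument can be completed correctly (given $s$ and a name $\dot\rho$, choose $n$ large enough that two incomparable nodes $u_0,u_1$ at level $\xi_n$ above $s$ avoid the finite domain of the working condition, colour them $0$ and $1$, then decide $\dot\rho(n)$ and pass to the node of the opposite colour). The paper does not take this route because its forcing $\mathbb{S}$ proves something strictly stronger than non-distributivity: $\mathbb{S}$ adds a weakly monotone $f\colon T\cup[T]\to\omega$ which is unbounded along every branch added by \emph{any} ground-model forcing $\mathbb{Q}$, so that $\mathbb{S}\times\mathbb{Q}$ collapses $\omega_1$; non-distributivity of $T$ is just the instance $\mathbb{Q}=\mathbb{T}$. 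Your guess for why properness rather than c.c.c.\ is wanted (that $\mathbb{P}$ should add no reals or branches) misses this point --- a Cohen colouring cannot make a $\sigma$-closed branch-adding forcing collapse $\omega_1$, whereas $\mathbb{S}$ does.

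There is, however, a genuine error in your first part: the splitting lemma is false as stated and its proof has a gap. Density of the eventually-linear nodes does \emph{not} imply that every cofinal branch contains one, so your injection from $[T]$ into the eventually-linear nodes is not available. Concretely, attach to every node of a Kurepa tree a fresh cofinal chain incomparable with everything else above that node; the result is still a Kurepa tree with countable levels and $\aleph_2$ branches, yet the eventually-linear nodes are dense, no $t_0$ with cofinal splitting above it exists, and in fact $\mathbb{T}$ has a dense set of atoms and is distributive no matter what you force. So the theorem itself needs a normality hypothesis on $T$ (which the paper also uses implicitly, since its final step requires $\mathbb{T}$ to add a \emph{new} branch). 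The correct repair is either to assume normality or to pass to the subtree $T^*$ of nodes with $\aleph_2$ many branches above them, which does split cofinally --- but then you must also argue why non-distributivity of the forcing $T^*$ yields non-distributivity of the forcing $T$, which fails when $T\setminus T^*$ is dense.

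The second part of your proposal, the side-condition forcing, is the one that actually corresponds to the paper's construction, and there you have correctly located but not filled the essential gap. The paper's conditions are pairs $\langle f_p,\vec{M}_p\rangle$ where $f_p$ is a finite weakly monotone function on $T\cup[T]$ whose domain is closed under meets, under projections to the levels $N\cap\omega_1$, and under adding the (unique) branch of a model $N\in\vec{M}_p$ through any point of the domain above $N\cap\omega_1$; the side conditions are Neeman sequences of \emph{two} types of models (countable and transitive), the latter being what preserves $\aleph_2$. The properness proof is not ``the usual way'': one must classify the three possible reasons a condition in $M$ can be incompatible with the diagonal condition $d$, produce a finite colouring of the relevant dense set witnessing these reasons, reflect it into $M$ via the compactness theorem for propositional logic, and iteratively eliminate colours using the fact that an unbounded pairwise-compatible set in $M$ defines a branch in $M$ that must already be closed into $\dom f_d$. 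That amalgamation argument is the entire technical content of the paper's Section 4, and your proposal explicitly leaves it open.
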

Note that without preserving $\omega_2$, this result is rather trivial, as one can collapse the cardinality of the set of branches to $\aleph_1$ and use a variant of Baugartner's forcing in order to specialize it.  Getting a forcing notion that preserved $\aleph_2$ seems to be more challenging. Moreover, we prove that \emph{any} forcing notion from the ground model that adds a branch to $T$ will collapse $\omega_1$. Our proof uses Neeman's methods of two-type model sequence side conditions from \cite{Neeman}, and uses properness to prove that the cardinals are preserved. 

Section \ref{section:the-generic-object} contains the proofs that this forcing is well behaved and produces the correct combinatorial object. We conclude the paper with some open problems.

\section{Preliminaries}\label{section:preliminaries}
This section provides the necessary definitions, notations, lemmas and theorems we use in the proofs in this paper. 

For basic definitions related to trees, we refer the reader to \cite{Jech}. Our trees are always rooted (namely, contains a minimal element).







\begin{definition}
    Let $T$ be a tree and $x,y\in T$. The \underline{meet} of $x,y$ is the unique element $r\in T$ such that $r\leq_T x,y$
    and $\forall t\in T,$ if $r <_T t$ then $t\nleq x\vee t\nleq y$. 
    Assuming that $T$ is rooted, this element exists. We denote
    it by $x\wedge y$.
\end{definition}

\begin{lemma}
    \label{lem:wedge_properties}
    The following properties hold for the $\wedge$ operator:

    \begin{enumerate}
        \item \label{wedge_property_1} For $x,y\in T$ if $x\leq y$ then $x\wedge y = x$.
        \item \label{wedge_property_2} For $x,y,z\in T$ if $x\leq y$ then $z\wedge x\leq z\wedge y$
        \item \label{wedge_property_3} For $x,y,z \in T$ if $x\leq y$ and $x\nleq z$ then $x\wedge z = y\wedge z$.
        \item \label{wedge_property_idenpotency} For $x, y, z\in T$, then $(x \wedge y) \wedge z \in \{x \wedge y, x \wedge z\}$.
    \end{enumerate}
\end{lemma}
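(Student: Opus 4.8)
The plan is to reduce all four items to two elementary facts about rooted trees: first, that for any node $t$ the set of predecessors $\{s : s \leq_T t\}$ is linearly (indeed well-) ordered, so any two predecessors of a common node are $\leq_T$-comparable; and second, that $x \wedge y$ is exactly the $\leq_T$-maximum of the set $C(x,y) = \{s : s \leq_T x \text{ and } s \leq_T y\}$ of common predecessors. Existence and uniqueness of this maximum is precisely what the rootedness hypothesis supplies, and I would record this maximality characterization at the outset, since the definition phrases $x \wedge y$ via the "no strictly larger common lower bound" property rather than as a maximum. With the characterization $r = x \wedge y \iff r \in C(x,y)$ and $s \leq_T r$ for all $s \in C(x,y)$, most items become bookkeeping about containments of the sets $C(\cdot,\cdot)$.

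For item \eqref{wedge_property_1}, if $x \leq_T y$ then $x \in C(x,y)$ and $x$ is the top of $C(x,y)$, so $x \wedge y = x$. For item \eqref{wedge_property_2}, assume $x \leq_T y$; then any common predecessor of $z$ and $x$ is also a predecessor of $y$, so $C(z,x) \subseteq C(z,y)$, whence $z \wedge x \in C(z,y)$, and by maximality of $z \wedge y$ in $C(z,y)$ this yields $z \wedge x \leq_T z \wedge y$.

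Item \eqref{wedge_property_3} is the one requiring the comparability fact. Assuming $x \leq_T y$ and $x \nleq_T z$, I would first show $y \wedge z <_T x$: since $y \wedge z$ and $x$ are both predecessors of $y$ they are comparable, and $x \leq_T y \wedge z$ would give $x \leq_T z$, contradicting the hypothesis, so $y \wedge z \leq_T x$, with equality impossible as it would again force $x \leq_T z$. Consequently $y \wedge z \in C(x,z)$, giving $y \wedge z \leq_T x \wedge z$; conversely $x \wedge z \leq_T x \leq_T y$ shows $x \wedge z \in C(y,z)$, giving $x \wedge z \leq_T y \wedge z$, and the two inequalities yield $x \wedge z = y \wedge z$. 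Finally, item \eqref{wedge_property_idenpotency} follows by a case split on whether $w := x \wedge y$ satisfies $w \leq_T z$: if it does, item \eqref{wedge_property_1} gives $(x\wedge y)\wedge z = w = x \wedge y$; if not, then since $w \leq_T x$ and $w \nleq_T z$, item \eqref{wedge_property_3} applied with $(w, x, z)$ in place of $(x, y, z)$ gives $w \wedge z = x \wedge z$, i.e.\ $(x\wedge y)\wedge z = x \wedge z$.

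The only genuinely delicate step is item \eqref{wedge_property_3}, and within it the claim $y \wedge z <_T x$; everything else is a direct unwinding of the maximality characterization. The comparability of predecessors of a common node is the single tree-theoretic input that makes the argument work, so I would state it explicitly as a preliminary observation before beginning the case analysis.
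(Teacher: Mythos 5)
Your proposal is correct and rests on the same underlying observation as the paper's proof, namely that the predecessors of a common node are pairwise comparable; your case split in item (4) on whether $x \wedge y \leq_T z$ is just the paper's "WLOG $x \wedge y \leq x\wedge z$ or vice versa" in disguise, routed through item (3) instead of argued directly. The paper proves only item (4) and declares the rest immediate, so your write-up simply supplies the details at a finer grain.
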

\begin{proof}
We will prove \ref{wedge_property_idenpotency}. The other properties are immediate.

First, the meet operator is associative and commutative. Since $x \wedge y, x\wedge z \leq x$, they are comparable. Without loss of generality, $x\wedge y \leq x \wedge z$ and in particular is it below $z$ and thus $(x\wedge y)\wedge z = x\wedge y$.
\end{proof}


\begin{definition}

    We denote the set of branches of $T$ by
    \begin{equation*}
        \left[T\right] \equiv \left\{ b\subset T\ \bigg|\ b\mathrm{\ is\ a\ branch}\right\}
    \end{equation*}
\end{definition}


\begin{theorem}[Baumgartner]
    Let $T$ be an $\omega_1$-Aronszajn tree, then there exists a c.c.c.\ forcing notion that adds a specializing function $f\colon T\to\omega$ using finite approximations.
\end{theorem}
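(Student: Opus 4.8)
The plan is to build a c.c.c. poset whose conditions are finite partial functions approximating the specializing map, and then verify the countable chain condition via a $\Delta$-system argument. Specifically, I would let $\mathbb{P}$ consist of all finite partial functions $p \colon T \to \omega$ such that $p$ is injective on chains within its domain, ordered by reverse inclusion. Since the union of a generic filter is a function $f \colon T \to \omega$ that is injective on every chain, any chain on which $f$ is injective must be countable, so $T$ has no uncountable chains after forcing — and because $f$ witnesses this fact in an upward-absolute way, $T$ remains Aronszajn. The two things to check are that $\mathbb{P}$ is nontrivial/dense enough (genericity really does define a total specializing function, which follows from a routine density argument showing that for each $t \in T$ the set of conditions with $t \in \dom p$ is dense) and that $\mathbb{P}$ is c.c.c. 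The latter is the heart of the matter.

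First I would set up the chain condition argument. Given an uncountable family $\{p_\alpha : \alpha < \omega_1\}$ of conditions, I would apply the $\Delta$-system lemma to the domains $\dom p_\alpha$ (finite sets) to obtain an uncountable subfamily whose domains form a $\Delta$-system with root $R$. By further thinning I may assume all the $p_\alpha$ agree on $R$, have domains of a fixed size $n$, and that for each fixed position the elements lie at a fixed level of $T$ (using that each level of an $\omega_1$-tree is countable, so the level of the $i$-th element of $\dom p_\alpha$ can be pinned down on an uncountable set). The goal is then to find $\alpha \neq \beta$ such that $p_\alpha \cup p_\beta$ is a condition, i.e. still injective on chains. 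Since the two conditions agree on the common root $R$, a conflict can only arise from a chain connecting the disjoint parts $\dom p_\alpha \setminus R$ and $\dom p_\beta \setminus R$ that is assigned the same color.

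The crucial combinatorial step, and the main obstacle, is showing that one can arrange the disjoint parts to be \emph{incomparable} across the two conditions, so that no new chain is created and injectivity on chains is automatically preserved. This is where being an Aronszajn tree — really, just having countable levels — is used essentially. The idea is that for a pair of nodes $x \in \dom p_\alpha \setminus R$ and $y \in \dom p_\beta \setminus R$ at fixed levels, if $x <_T y$ then $x$ lies on the chain below $y$; but the nodes below $y$ at the level of $x$ form a single node determined by $y$, and as $\beta$ ranges over an uncountable set these determining nodes would have to be distinct (the $\dom p_\beta$ are disjoint off the root), giving uncountably many distinct nodes at a fixed level, contradicting countability of levels. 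Making this precise requires a careful simultaneous thinning so that the finitely many pairwise comparability patterns stabilize; I would use a pressing-down/counting argument at each relevant level to remove all cross-comparabilities, after which $p_\alpha \cup p_\beta$ introduces no chain not already inside a single condition, so it is a condition. This yields two compatible elements and establishes the c.c.c.

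Finally I would note that properness of the resulting generic function is immediate once c.c.c. and the density facts are in hand: $\mathbb{P}$ preserves all cardinals and cofinalities, $f = \bigcup G$ is total and injective on chains by the density argument, and hence specializes $T$. I would record that no new branches are added precisely because $f$ is an upward-absolute witness to the Aronszajn property, completing the proof.
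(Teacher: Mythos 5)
There is a genuine gap in the heart of your c.c.c.\ argument, in two places. First, the thinning step where you fix ``for each fixed position the elements lie at a fixed level of $T$'' is impossible: the map $\alpha \mapsto \Lev(\text{$i$-th element of } \dom p_\alpha)$ takes values in $\omega_1$, so it need not be constant on an uncountable set --- and in fact it \emph{cannot} be, since a single level is countable while the off-root parts are pairwise disjoint. You have the use of ``countable levels'' inverted. Second, and decisively, your argument for eliminating cross-comparabilities invokes only the countability of levels and never the absence of uncountable chains; you even say explicitly that ``really, just having countable levels'' is what is used. This cannot be right: a tree with countable levels may carry an uncountable branch $b$, and then the conditions $p_\alpha = \{(b(\alpha),0)\}$ form an uncountable antichain in your poset, so no proof of c.c.c.\ that applies to all trees with countable levels can be correct. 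Concretely, in your counting step the projections of the various $y_\beta$ to the level of $x$ need not be distinct --- uncountably many distinct nodes at a higher level can all project to the same node below --- so no contradiction with countability of levels arises.

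The missing ingredient is the genuinely nontrivial lemma that in an Aronszajn tree, any uncountable family of pairwise disjoint $n$-element subsets contains two members that are \emph{elementwise} incomparable. Its proof must use the non-existence of uncountable branches, and the standard routes are Baumgartner's ultrafilter argument or the elementary-submodel version that this paper gives for the closely related properness statement in Section \ref{section:sealing-trees-forcing}: one fixes a countable $M \prec H(\lambda)$ and a condition $d \notin M$ incompatible with everything in a dense set of $M$, colors the conditions of $M$ by the coordinate pair $(i,j)$ witnessing incompatibility, transfers the coloring into $M$ by a compactness argument, and then observes that a color class whose $j$-th coordinates are pairwise comparable and unbounded would define, inside $M$, an uncountable branch of $T$ --- contradicting that $M$ thinks $T$ is Aronszajn. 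That is exactly the point where the Aronszajn hypothesis (and not merely countable levels) does its work, and it is absent from your sketch; the surrounding scaffolding ($\Delta$-system, agreement on the root, density, upward absoluteness of specialness) is fine.
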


\begin{definition}
    A tree $T$ is called a Kurepa tree if $T$ is an $\omega_{1}$-tree
    and $\left|\left[T\right]\right|=\aleph_{2}$.
\end{definition}

\begin{theorem}[Solovay \cite{JechSolovay}]
    \label{thm:solovay}
    If $V = L$, then there exists a Kurepa tree.
\end{theorem}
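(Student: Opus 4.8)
The plan is to use the fact that $V=L$ yields the combinatorial principle $\diamondsuit^{+}$ and to extract a Kurepa tree from it. Recall that $\diamondsuit^{+}$ asserts the existence of a sequence $\langle \mathcal{A}_\alpha : \alpha<\omega_1\rangle$ with each $\mathcal{A}_\alpha\in[\mathcal{P}(\alpha)]^{\le\aleph_0}$ such that for every $A\subseteq\omega_1$ there is a club $C\subseteq\omega_1$ with $A\cap\alpha\in\mathcal{A}_\alpha$ and $C\cap\alpha\in\mathcal{A}_\alpha$ for all $\alpha\in C$. First I would verify $\diamondsuit^{+}$ in $L$: the canonical sequence is $\mathcal{A}_\alpha=\mathcal{P}(\alpha)\cap L_{\tau(\alpha)}$, where $\tau(\alpha)<\omega_1$ is obtained by collapsing a suitable countable elementary submodel of some $L_\theta$; G\"odel's condensation lemma then produces, for a given $A$, the required club of ordinals $\alpha$ at which $A\cap\alpha$ and the club itself are captured. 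I would treat this step as standard rather than re-proving it in detail.

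The main construction reduces the theorem to producing a \emph{Kurepa family}, i.e.\ a family $\mathcal{F}\subseteq\mathcal{P}(\omega_1)$ with $|\mathcal{F}|=\aleph_2$ such that for every $\alpha<\omega_1$ the trace set $\{X\cap\alpha : X\in\mathcal{F}\}$ is countable. Given such $\mathcal{F}$, I would let $T=\{\chi_X\restriction\alpha : X\in\mathcal{F},\ \alpha<\omega_1\}\subseteq{}^{<\omega_1}2$ ordered by end-extension of functions, so that a node at level $\alpha$ is $\chi_X\restriction\alpha\colon\alpha\to 2$. Then $\mathrm{Lev}_\alpha(T)$ is in bijection with the trace set at $\alpha$, hence countable; the tree has height $\omega_1$; and each $X\in\mathcal{F}$ determines the distinct cofinal branch $\{\chi_X\restriction\alpha:\alpha<\omega_1\}$. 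Thus $|[T]|\ge\aleph_2$, while $|[T]|\le 2^{\aleph_1}=\aleph_2$ by GCH in $L$, giving $|[T]|=\aleph_2$, i.e.\ $T$ is Kurepa.

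It remains to build $\mathcal{F}$. I would let $\mathcal{F}$ consist of ``captured'' sets produced by a coding that forces $\aleph_2$ of them into the family: using $\diamondsuit^{+}$, for each $A\subseteq\omega_1$ with associated club $C_A$ I would define a set $d(A)$ which agrees with $A$ on the blocks between successive points of $C_A$ and is reset to a canonical element of $\mathcal{A}_\alpha$ across the limits not in $C_A$, arranged coherently so that $d(A)\cap\alpha\in\mathcal{A}_\alpha$ at \emph{every} limit $\alpha$ and so that $A\mapsto d(A)$ is injective. The trace set at each $\alpha$ is then contained in the countable set $\mathcal{A}_\alpha$, and since the $2^{\aleph_1}=\aleph_2$ distinct choices of $A$ yield distinct $d(A)$, we obtain $|\mathcal{F}|=\aleph_2$.

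The hard part is precisely this last step: guaranteeing that the coded sets are captured at \emph{every} limit level and not merely on the $\diamondsuit^{+}$-club, while keeping the assignment $A\mapsto d(A)$ injective. This requires a coherence property of the sequence $\langle\mathcal{A}_\alpha\rangle$ --- namely that a suitable restriction of $\mathcal{A}_\beta$ is reflected into $\mathcal{A}_\gamma$ for $\gamma<\beta$ --- which is available for the canonical $L$-sequence by condensation but must be tracked carefully through the block-coding. The remaining verifications (that levels are countable, that the height is $\omega_1$, and that end-extension is a tree order) are routine.
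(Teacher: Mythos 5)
Your route is genuinely different from the paper's. The paper follows Solovay directly: working in $L$ it sets $f(\alpha)=\min\{\gamma \mid \alpha\in L_\gamma\prec L_{\omega_1}\}$ and takes the Kurepa family $\mathcal{F}=\{X\subseteq\omega_1 \mid \forall\alpha<\omega_1,\ X\cap\alpha\in L_{f(\alpha)}\}$, with countable traces coming from the countability of $L_{f(\alpha)}$ and the size $\aleph_2$ coming from condensation (this construction is displayed in the proof of Theorem \ref{thm:solovay-tree-is-distributive}). You instead factor through the combinatorial principle $\diamondsuit^{+}$ and invoke Kunen's theorem that $\diamondsuit^{+}$ implies the existence of a Kurepa family. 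Both are legitimate; the paper itself remarks that the standard construction of $\diamondsuit^{+}$ in $L$ is a variation of Solovay's construction, so the two proofs are close cousins. What your route buys is modularity (any model of $\diamondsuit^{+}$ works, not just $L$); what it costs is that you now owe two black boxes ($V=L\Rightarrow\diamondsuit^{+}$ and $\diamondsuit^{+}\Rightarrow$ Kurepa family) rather than one condensation argument. Your reduction from a Kurepa family to a Kurepa tree via characteristic functions, with the upper bound $|[T]|\le 2^{\aleph_1}=\aleph_2$ from GCH in $L$, is correct.

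There is, however, a real problem with the coding step as you describe it. If $d(A)$ literally ``agrees with $A$ on the blocks between successive points of $C_A$,'' then for a limit ordinal $\alpha$ lying strictly inside a block $[\gamma_\xi,\gamma_{\xi+1})$, the trace $d(A)\cap\alpha$ contains the arbitrary piece $A\cap[\gamma_\xi,\alpha)$; as $A$ varies this gives uncountably many traces at level $\alpha$, destroying exactly the property you need. The correct coding makes $d(A)$ \emph{canonical} on each block and records ``$\xi\in A$'' only by a binary choice attached to the $\xi$-th point of $C_A$ (for instance $d(A)\subseteq C_A$ with $\gamma_{\xi}\in d(A)\iff\xi\in A$), so that $d(A)\cap\alpha$ is computable from $C_A\cap\alpha$ and $A\cap\mathrm{otp}(C_A\cap\alpha)$ at limit points of $C_A$, and equals $d(A)\cap(\sup(C_A\cap\alpha)+1)$ at all other limits. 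The coherence you worry about is then upward, not downward, and requires no special property of the canonical $L$-sequence: replace $\mathcal{A}_\alpha$ by its union with $\bigcup_{\beta<\alpha}\mathcal{A}_\beta$ and close under the finitely many definable operations used in the coding; each $\mathcal{A}_\alpha$ stays countable and the guessing property is preserved. With that repair your argument goes through.
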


\begin{notation}
    A forcing notion is denoted by $\mathbb{P},\mathbb{Q},\mathbb{R}$ etc.
\end{notation}

\begin{notation}
    If $x,y\in \mathbb{P}$ and $\exists r\in \mathbb{P}$ such that $r\leq_{\mathbb{P}} x,y$ we denote $x\parallel y$. For a tree $T$ this translates to, $x, y\in T$ then $x\parallel y \iff x \leq_T y\vee y\leq_T x$.
\end{notation}

\begin{definition}
    Let $T$ be a tree, we define the tree-forcing $\mathbb{T} = T$ and $\leq_T\: =\: \geq_{\mathbb{T}}$ (reverse order).
\end{definition}

\begin{lemma}
    \label{lem:T_height_in_M}
    Let $T$ be an $\omega_1$-tree and let $M$ be a countable elementary submodel 
    of $H(\lambda)$ for some large regular cardinal $\lambda$, with $T\in M$. Then
    \begin{equation*}
        T\cap M = T\restriction M\cap\omega_1
    \end{equation*}
\end{lemma}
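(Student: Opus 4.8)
The plan is to let $\delta = M \cap \omega_1$, which is a countable ordinal since $M$ is countable and $\omega_1$ is regular, and to read $T \restriction M \cap \omega_1$ as the set of nodes of $T$ whose level is $< \delta$. I would then prove the two inclusions separately, using nothing beyond elementarity and the defining property of an $\omega_1$-tree.

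For the inclusion $T \cap M \subseteq T \restriction \delta$, I would note that the level function $\level_T \colon T \to \omega_1$ is definable from $T$, hence lies in $M$ by elementarity. Thus for any $x \in T \cap M$ the ordinal $\level_T(x)$ belongs to $M$; being a countable ordinal in $M$, it lies in $M \cap \omega_1 = \delta$, so $\level_T(x) < \delta$ and $x \in T \restriction \delta$.

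For the reverse inclusion, fix $x \in T$ with $\alpha := \level_T(x) < \delta$. Since $\alpha < \delta = M \cap \omega_1$ we have $\alpha \in M$, and therefore the $\alpha$-th level $\Lev_\alpha(T)$, being definable from the parameters $T, \alpha \in M$, also lies in $M$. The key point is that $\Lev_\alpha(T)$ is genuinely countable because $T$ is an $\omega_1$-tree; hence $M \models \text{``}\Lev_\alpha(T)\text{ is countable''}$ by elementarity (countability being correctly computed in $H(\lambda)$), so $M$ contains a surjection $g \colon \omega \to \Lev_\alpha(T)$. As $\omega \subseteq M$, we get $g(n) \in M$ for every $n$, whence $\Lev_\alpha(T) = \range(g) \subseteq M$. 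In particular $x \in M$.

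The only real obstacle is the standard but essential lemma used in the second inclusion: an element of $M$ that $M$ believes to be countable is in fact a subset of $M$. Once this is isolated the argument is routine. I would flag that we are implicitly using that the levels of an $\omega_1$-tree are genuinely countable, not merely of size $< \omega_1$, which is exactly what makes the surjection from $\omega$ available inside $M$.
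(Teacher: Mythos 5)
Your proof is correct and follows essentially the same route as the paper: the key step in both is that each level $\Lev_\alpha(T)$ for $\alpha < M\cap\omega_1$ is an element of $M$ that $M$ sees to be countable, so a bijection (or surjection) from $\omega$ onto it inside $M$ forces $\Lev_\alpha(T)\subseteq M$. You are slightly more complete in that you also spell out the easy inclusion $T\cap M\subseteq T\restriction(M\cap\omega_1)$, which the paper leaves implicit.
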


\begin{proof}
    $M \models T\ \mathrm{is\ an\ }\omega_1\mathrm{- tree}$. Thus, $T_\alpha \in M$ for all $\alpha < \omega_1\cap M$ since it is definable in $M$ using $T$. Since $M\models \left| T_\alpha\right| = \aleph_0$ there exists in $M$ a bijection $f:\omega\rightarrow T_\alpha$. Since $M$ is an elementary submodel $f$ is a bijection in the model. Since $\forall n < \omega, f(n)\in M$ and $f$ is a bijection it follows that $T_\alpha \subset M$.
\end{proof}

\begin{corollary}
    \label{cor:M_cap_omega1_geq}
    If $M,N$ are two countable elementary submodels of $H(\lambda)$ and $T\in M,N$ and $\exists x \in T\cap M \backslash N$ then $M\cap\omega_1 > N\cap\omega_1$.
\end{corollary}

\begin{theorem}[Shelah and Po{\'o}r \cite{Poor}]
    \label{thm:poor}
    Assume that there exists $A\subset \omega_1$ s.t.
    \begin{equation*}
        \omega_1^{L[A]} = \omega_1^V \wedge \omega_2^{L[A]} = \omega_2^V
    \end{equation*}
    then there exists a Kurepa tree with $[T]\subset L[A]$.
\end{theorem}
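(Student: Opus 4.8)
The plan is to carry out the entire construction inside $L[A]$ and then transfer it to $V$ using the two agreement hypotheses. First I would reduce the theorem to the following internal statement: there is a tree $T\in L[A]$ that $L[A]$ believes to be an $\omega_1$-tree with exactly $\aleph_2$ cofinal branches, and to which $V$ adds no new branch. Granting this, the transfer is immediate. Since $\omega_1^{L[A]}=\omega_1^V$, the levels of $T$ stay countable and its height stays $\omega_1$ in $V$; since $\omega_2^{L[A]}=\omega_2^V$ and $L[A]\models 2^{\aleph_1}=\aleph_2$, the $\aleph_2^{L[A]}$ branches of $L[A]$ number exactly $\aleph_2^V$; and, there being no new branches, $[T]^V=[T]^{L[A]}\subseteq L[A]$ has size $\aleph_2$. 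Hence $T$ is a Kurepa tree in $V$ with $[T]\subseteq L[A]$, and both hypotheses are used: $\omega_1$-agreement for $\omega_1$-tree-ness, $\omega_2$-agreement for having exactly $\aleph_2$ branches.

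For the construction in $L[A]$ I would use Gödel condensation for the relativised hierarchy, working in the model $L[A]$ itself (where $\diamondsuit^+$ holds). To each $\alpha<\omega_1$ I assign an ordinal $\gamma_\alpha<\omega_1$ — essentially the least $\gamma$ with $L_\gamma[A\cap\alpha]\models \mathrm{ZF}^-+{}$``$\alpha$ is the largest cardinal'' — and let the level-$\alpha$ nodes be the pairs $(X\cap\alpha, N_\alpha)$ with $X\subseteq\alpha$ lying in the structure $N_\alpha:=L_{\gamma_\alpha}[A\cap\alpha]$, ordered by end-extension of the first coordinate together with the natural $\Sigma_1$-embeddings of the second coordinates. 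Each level is countable in $L[A]$ because $N_\alpha$ is countable there. To produce the branches I would run, for each of the $2^{\aleph_1}=\aleph_2$ many $X\in\mathcal{P}(\omega_1)^{L[A]}$, a continuous elementary chain of countable elementary submodels of a large $L_\theta[A]$ with $A,X$ in the first model; condensation identifies the transitive collapses with the structures $N_\alpha$ at the critical points $\alpha=M\cap\omega_1$, yielding a club $C_X$ along which $X\cap\alpha\in N_\alpha$ and hence a cofinal branch $b_X$. Distinct $X$ give distinct branches, so $[T]^{L[A]}=\{b_X : X\in\mathcal{P}(\omega_1)^{L[A]}\}$ has size $\aleph_2$; this is just the canonical $\diamondsuit^+$ of $L[A]$.

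The heart of the argument, and the step I expect to be the main obstacle, is showing that $V$ adds no new branch. Let $b\in V$ be any cofinal branch and $X=\bigcup b\subseteq\omega_1$. Along $b$ the second coordinates $N_\alpha$ form a directed system of countable condensed structures linked by commuting $\Sigma_1$-elementary embeddings, and $X$ is read off as the union of the first coordinates. The plan is to form the direct limit $N_\infty$ of this system in $V$ and establish two things: that $N_\infty$ is well-founded, and that its transitive collapse is an initial segment $L_\gamma[A]$ of the $L[A]$-hierarchy with $X\in L_\gamma[A]$ and $\gamma<\omega_2^{L[A]}$. Well-foundedness is the key technical point, and is where $\omega_1^{L[A]}=\omega_1^V$ enters: each $N_\alpha$ stays well-founded and countable in $V$, and I would derive a contradiction from any infinite descending $\in$-chain through $N_\infty$, which would be captured on a countable set of levels. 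Once $N_\infty$ is well-founded, condensation for $L[A]$ forces its collapse to be a genuine level of the hierarchy, and $\omega_2^{L[A]}=\omega_2^V$ guarantees that level lies below $\omega_2$; hence $X\in L[A]$ and $b=b_X\in L[A]$.

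The reason the construction must carry the fine-structural codes $N_\alpha$, rather than the bare initial segments $X\cap\alpha$, is that a pure counting argument cannot exclude new branches: a new branch need only split from each of the $\aleph_2$ old branches at some countable level, and pigeonhole over the countable set of levels yields no contradiction. It is precisely the requirement that a cofinal branch assemble a coherent, $\Sigma_1$-directed system whose direct limit is forced to be well-founded that pins every branch to $L[A]$. Getting the bookkeeping of the ordinals $\gamma_\alpha$ right — slow enough that every $L[A]$-branch is captured on a club, yet rigid enough that no $V$-branch can produce an ill-founded or non-collapsing limit — is the technical crux, and is exactly where both agreement hypotheses are indispensable.
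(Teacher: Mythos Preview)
The paper does not actually prove this statement: Theorem~\ref{thm:poor} is simply quoted as a result of Shelah and Po\'or from \cite{Poor}, and only the two-line corollary that the resulting tree is sealed is argued here. So there is no in-paper proof to compare your proposal against.

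Your outline is in the right spirit --- build the tree inside $L[A]$ using condensation so that nodes carry fine-structural witnesses, then argue that any $V$-branch assembles a well-founded directed system that condenses back into $L[A]$. Two cautions are worth flagging. First, as written your level-$\alpha$ nodes all share the \emph{same} second coordinate $N_\alpha$, so the pair is redundant and the tree reduces to the Solovay-style family $\{X\cap\alpha : X\cap\alpha\in L_{\gamma_\alpha}[A\cap\alpha]\}$; for that tree the direct-limit machinery is not quite the natural tool, and you would instead need to show directly that a $V$-branch $X$ still satisfies $X\cap\alpha\in L_{\gamma_\alpha}[A\cap\alpha]$ for every $\alpha$ and that this pins $X$ to $L[A]$. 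If you intend the second coordinates to genuinely vary (nodes as pairs $(s,M)$ with $M$ a condensed code for $s$), say so. Second, the well-foundedness of the direct limit --- which you rightly identify as the crux --- is not settled by ``capturing a descending chain on countably many levels'': one has to arrange the tree order and the maps so that any putative ill-foundedness reflects into a single $N_\alpha$, which is transitive and hence well-founded in $V$. The construction in \cite{Poor} handles this through a careful choice of the $\gamma_\alpha$ and the embeddings; your sketch has the right architecture but leaves this genuinely nontrivial step as a promissory note.
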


\begin{corollary}
    The Kurepa tree constructed in Theorem \ref{thm:poor} is sealed.
\end{corollary}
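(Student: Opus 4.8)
The plan is to reduce sealing to a second application of Theorem \ref{thm:poor}, this time carried out inside the forcing extension. The essential point I would isolate first is that the tree produced by Theorem \ref{thm:poor} is \emph{canonical}: it is constructed inside $L[A]$ from the parameter $A$ alone, so $T \in L[A]$ and $T$ does not depend on the ambient universe in which the construction is run. Since $A \subset \omega_1$ already lies in $V$, the inner model $L[A]$ and the tree $T$ are computed identically in $V$ and in any generic extension $V[G]$.

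Next I would set up the contradiction. Suppose $\mathbb{P} \in V$ satisfies $\mathbb{P} \Vdash$ ``there is a new branch for $T$,'' and suppose toward a contradiction that $\mathbb{P}$ preserves both $\omega_1$ and $\omega_2$. Fix a generic $G$ and a branch $b \in [T]^{V[G]}$ with $b \notin V$; since $L[A] \subseteq V$, in particular $b \notin L[A]$.

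The key step is to verify that the hypotheses of Theorem \ref{thm:poor} hold with $V[G]$ playing the role of $V$. By assumption the hypothesis holds in $V$, that is $\omega_1^{L[A]} = \omega_1^V$ and $\omega_2^{L[A]} = \omega_2^V$. Because $\mathbb{P}$ preserves $\omega_1$ and $\omega_2$, we have $\omega_1^V = \omega_1^{V[G]}$ and $\omega_2^V = \omega_2^{V[G]}$, and therefore $\omega_1^{L[A]} = \omega_1^{V[G]}$ together with $\omega_2^{L[A]} = \omega_2^{V[G]}$. Applying Theorem \ref{thm:poor} inside $V[G]$ to the same canonical tree $T$ then yields $[T]^{V[G]} \subseteq L[A]$, which contradicts $b \in [T]^{V[G]} \setminus L[A]$. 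Hence $\mathbb{P}$ must collapse $\omega_1$ or $\omega_2$, which is exactly the assertion that $T$ is sealed.

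I expect the main obstacle to be the first step: one must confirm, by inspecting the construction behind Theorem \ref{thm:poor}, that the tree it delivers is literally the tree defined from $A$ in $L[A]$, and that the conclusion $[T] \subseteq L[A]$ is genuinely a statement about \emph{every} branch of this fixed tree in the surrounding model --- hence re-usable verbatim in $V[G]$ --- rather than a claim tied to the particular universe $V$. Once this absoluteness is pinned down, the cardinal-preservation bookkeeping is routine.
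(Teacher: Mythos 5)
Your proof is correct and follows essentially the same route as the paper: assume $\mathbb{P}$ preserves $\omega_1$ and $\omega_2$, use the absoluteness of the construction of $L[A]$ to see that the hypotheses of Theorem \ref{thm:poor} still hold in $V[G]$, and conclude that every branch of $T$ in $V[G]$ lies in $L[A]\subseteq V$, so no new branch is added. You are somewhat more explicit than the paper's one-line argument about the need to re-apply the theorem inside $V[G]$ and about the canonicity of $T$ as an object definable in $L[A]$ from $A$, but this is the same proof.
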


\begin{proof}
    Let $\mathbb{P}$ be some forcing poset and $G\subset \mathbb{P}$ a generic filter. Assume that

    \begin{equation*}
        \mathbb{P}\Vdash \omega_1^V = \omega_1^{V[G]}\wedge \mathbb{P}\Vdash \omega_2^V = \omega_2^{V[G]}
    \end{equation*}

    But then the construction of $L[A]$ is absolute thus $L[A]^V = L[A]^{V[G]}$. Since all the branches of $T$ are in $L[A]$ no branches are added.
\end{proof}

Let us recall the following combinatorial guessing principle.
\begin{definition}[Kunen]
A sequence $\langle \mathcal{A}_\alpha \mid \alpha < \omega_1\rangle$ is called a $\diamondsuit^+$-sequence if for every $\alpha < \omega_1$, $|\mathcal{A}_\alpha| \leq \aleph_0$ and for every $A \subseteq \omega_1$ there is a club $C \subseteq \omega_1$ such that for every $\alpha \in C$, $A\cap \alpha, C\cap \alpha \in \mathcal{A}_\alpha$.
\end{definition}

It was shown by Kunen that $\diamondsuit^+$ implies $\diamondsuit$, holds in $L$ (and generally, in fine structural inner models), and that $\diamondsuit^+$ implies the existence of a Kurepa tree. In fact the standard construction of $\diamondsuit^+$ in $L$ is a variation of Solovay's construction of a Kurepa tree.  

In Section \ref{section:distributive-trees}, we will show that it is consistent that $\diamond^+$ holds while no Kurepa tree is sealed.
\subsection{Properness and side conditions}
The definitions regarding properness are taken from \cite{Uri}.

\begin{definition}
    Let $\mathbb{P}$ be a forcing notion, $\lambda > 2^{\mathbb{P}}$ and $M\prec H(\lambda)$.

    A condition $p\in \mathbb{P}$ is called $M$-generic iff for every dense $D\subset \mathbb{P}$ such that $D\in M$ and for every $q\leq p$ there exists $d\parallel q$ such that $d\in D\cap M$.
\end{definition}

\begin{definition}
    A poset $\mathbb{P}$ is said to be proper for $M\prec H(\lambda)$ if $\forall p\in \mathbb{P}$, $\exists q\leq p$ such that $q$ is an $M$-generic condition.
\end{definition}

\begin{definition}
    For a set of models $\mathcal{M}$ we say $\mathbb{P}$ is proper for $\mathcal{M}$ if $\mathbb{P}$ is proper for all $M\in \mathcal{M}$.
\end{definition}

The main usage of properness is the preservation of cardinals (and more generally, preservation of stationarity of sets). Indeed, if there is a stationary set $\mathcal{M} \subseteq P_{\omega_1}(H(\lambda))$ for some large $\lambda$, such that $\mathbb{P}$ is proper for $\mathcal{M}$, then $\mathbb{P}$ does not collapse $\omega_1$.

If $\mathcal{M}$ is stationary in $P_{\omega_2}(H(\lambda))$ and consists of models that contain $\omega_1$, then properness for $\mathcal{M}$ implies the preservation of $\omega_2$.

Our intended forcing from Section \ref{section:sealing-trees-forcing} is designed to add a monotone function from a given Kurepa tree $T$ to $\omega$, in a way that will force it to be non-distributive. Adding this function using finite approximations in the naive way will end up in collapsing all relevant cardinals. So, instead we would like to limit to behaviour of those approximations in a way that will guarantee properness for some stationary set of models. This method, called \emph{forcing with side conditions}, was originated in the work of Todor\v{c}evi\'{c} for models of a single type, continued in various works for models of two types (see \cite{Mitchell, Krueger2, Krueger} and the formulation of Neeman that  we are using here, \cite{Neeman, Neeman2}). Later advances in the technique which were obtained by Velickovic and Mohhamadpour are likely to be crucial in extending this result.

Here we bring definitions and results from \cite{Neeman} that we use in this work. 

\begin{definition}
    \label{def:S_and_T}
    ${\mathcal S}$ and ${\mathcal T}$ are appropriate for cardinals $\kappa$,
    $\lambda$ and a transitive model $\left(K,\in\right)$ satisfying
    enough of \textbf{ZFC} and by enough we mean: closed under
    pairing, union, intersection, set difference, Cartesian product and
    transitive closure. Closed under range and restriction on functions
    and that for each $x\in K$ the closure of $x$ under intersections
    belongs to $K$, there is a bijection from an ordinal onto $x$ in
    $K$ and there is a sequence in $K$ consisting of members of $x$
    arranged in a non-decreasing Von-Neumann rank. If:
    \begin{enumerate}
        \item $\mathcal{T}$ is a collection of transitive $W\prec K$, and $\mathcal{S}$
              is a collection of $M\prec K$ with $\kappa\subset M$ and $\left|M\right|<\lambda$. $\forall N \in \mathcal{S}\cup\mathcal{T},\ N\in K,\ \left\{ \kappa,\lambda \right\}\in N$.

        \item $\forall M_{1},M_{2}\in{\mathcal S}$ and $M_{1}\in M_{2}$ then $M_{1}\subset M_{2}$.
        \item $W\in\mathcal{T}$ and $M\in\mathcal{S}$ and $W\in M$ then $M\cap W\in W$
              and $M\cap W\in\mathcal{S}$.
        \item Each $W\in\mathcal{T}$ is closed under sequences of length $<\kappa$.
    \end{enumerate}
\end{definition}

\begin{remark}
    Here, we concentrate on the definition \ref{def:S_and_T} in the case $\kappa =\omega$, $\lambda = \omega_1$ and $K = H(\omega_2)$. Thus, $\mathcal{S}$ becomes a collection of countable elementary submodels, and $\left( 2,4 \right)$ follow from elementarity. From now on we present only the discussed case (though we encourage reading about the general approach in \cite{Neeman}).
\end{remark}

\begin{definition}
    For $\mathcal{T},\mathcal{S}$ appropriate for $\omega,\omega_1,K$ we define the two-type model sequence poset $\mathbb{M} =\mathbb{M}_{\omega,\mathcal{S},\mathcal{T},K}$. $p\in\mathbb{M}$ iff:
    \begin{enumerate}
        \item $p = \langle  M_m\ \Big|\ m < n\rangle$ is a finite sequence that belongs to $K$ and $\forall m,\ M_m\in\mathcal{S}\cup\mathcal{T}$.
        \item The sequence is $\in$-increasing, $M_{m - 1} \in M_{m}$ for $0 < m < n$
        \item The sequence is closed under intersections, $\forall k,m < n,\ M_k\cap M_m$ is in $p$.
    \end{enumerate}
\end{definition}

\begin{definition}
    For $p,q\in\mathbb{M}$, where $p =\langle  M_\xi\ \Big|\ \xi <\gamma_p\rangle$ and $q =\langle  M_\xi\ \Big|\ \xi <\gamma_q\rangle$ we define $p\leq_\mathbb{M}q$ iff $\left\{ M_\xi\ \Big|\ \xi <\gamma_p \right\} \supset \left\{ M_\xi\ \Big|\ \xi <\gamma_q \right\}$.
\end{definition}

\begin{definition}
    Let $p\in\mathbb{M}$ and let $Q\in p$ define the residue of $p$ in $Q$.
    \begin{equation*}
        res_Q\left( p \right) = \left\{ M\in p\ \Big|\ M\in Q \right\}
    \end{equation*}
\end{definition}

\begin{lemma}
    \label{lem:why_d_M_is_a_condition}
    Let $p\in\mathbb{M}$ and $Q\in p$ then $res_Q\left( p \right)\in \mathbb{M}$.
\end{lemma}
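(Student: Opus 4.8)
The plan is to verify directly that $res_Q(p)$ satisfies the three defining clauses of a condition in $\mathbb{M}$. Write $p = \langle M_m \mid m < n\rangle$, and recall $res_Q(p) = \{M \in p \mid M \in Q\}$. Since this is a subcollection of the models appearing in $p$, every model occurring in it already lies in $\mathcal{S}\cup\mathcal{T}$, so the first half of clause (1) is inherited from $p$. Moreover, because $p$ is $\in$-increasing, its models are linearly ordered by $\in$, and hence so is any subcollection of them; listing the elements of $res_Q(p)$ in this $\in$-order yields a finite $\in$-increasing sequence, which is clause (2).

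For membership in $K$ (the second half of clause (1)), I would argue that $res_Q(p)$ is built from $p$ and $Q$ by operations under which $K$ is closed. First note that $Q\in K$, since $Q \in p$ and, by appropriateness, every model occurring in a condition belongs to $K$. The underlying set of $res_Q(p)$ is then $\range(p)\cap Q$, which lies in $K$ because $K$ is closed under range and under intersection; its arrangement into an $\in$-increasing (equivalently, Von-Neumann-rank-nondecreasing) sequence is again available in $K$ by the hypothesis that $K$ contains such orderings of each of its sets.

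The crucial clause is (3), closure under intersections. Let $M, M' \in res_Q(p)$. Since $M, M' \in p$ and $p$ is closed under intersections, $M\cap M'$ already appears in $p$; it remains to see that $M\cap M'\in Q$. Here I would use that $Q\prec K$: whether $Q\in\mathcal{S}$ or $Q\in\mathcal{T}$, $Q$ is an elementary submodel of $K$. As $M, M'\in Q$ and $K$ is closed under intersections, the object $M\cap M'$ is the unique $z\in K$ with $z = M\cap M'$, and by elementarity this witness is found inside $Q$, so $M\cap M'\in Q$. Combining the two facts gives $M\cap M'\in res_Q(p)$, establishing clause (3) and completing the verification.

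I expect the only real content to sit in clause (3). The elementarity step that keeps the intersection inside $Q$ is precisely where the assumption $Q\prec K$ (rather than merely $Q\in p$) is needed, and it is worth noting that this argument covers the $\mathcal{S}$ and $\mathcal{T}$ cases uniformly, so no case split on the type of $Q$ is required.
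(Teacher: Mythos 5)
The paper does not prove this lemma at all --- it is quoted from Neeman \cite{Neeman} --- so there is no in-paper proof to compare against; I will assess your argument on its own terms. Your verification of clause (3) is correct and is the cleanest way to see closure under intersections: since $Q \prec K$ and $K$ is closed under intersections, the witness to $z = M \cap M'$ is found inside $Q$ by elementarity, and $M \cap M'$ already lies in $p$ by closure of $p$. The membership-in-$K$ argument is also fine.

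The genuine gap is in your treatment of clause (2). You assert that because $p$ is $\in$-increasing, its models are \emph{linearly ordered} by $\in$, so that any subcollection is again $\in$-increasing. This is false in the two-type setting: an $\in$-increasing sequence only relates \emph{consecutive} nodes, and $\in$ is not transitive along a condition. Concretely, take $W \in \mathcal{T}$ and $M \in \mathcal{S}$ with $W \in M$; closing $\{W, M\}$ under intersections yields the condition $\langle M \cap W,\, W,\, M\rangle$, and here $M \cap W \in W \in M$ but $M \cap W \notin M$ (if $M \cap W$ were an element of $M$, then so would be $\sup(M\cap W\cap\omega_1) = M \cap \omega_1$, a contradiction). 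So the nodes of a condition are in general \emph{not} pairwise $\in$-comparable, and the claim that an arbitrary subcollection is $\in$-increasing does not follow. This is exactly where the real content of the lemma sits: one must show that if $M, M'$ are nodes of $p$ lying in $Q$ with no node of $res_Q(p)$ strictly between them, then $M \in M'$, and this requires Neeman's structural analysis of conditions (transitive nodes contain all earlier nodes as elements, while the set of earlier nodes belonging to a countable node is governed by the block decomposition between transitive nodes). Your proof as written replaces this analysis with a false premise, so clause (2) is not established.
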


\begin{definition}
    Conditions $p,q\in\mathbb{M}$ are compatible if $\exists r \in \mathbb{M}$ such that $r\supset p\cup q$ and are directly compatible if $r$ is the closure of $p\cup q$ under intersections.
\end{definition}

\begin{lemma}
    \label{lem:q_leq_res}
    Let $p\in \mathbb{M}$, $Q\in p$ and let $q\in Q\cap\mathbb{M}$ such that $q\leq res_Q\left( p \right)$ then $p,q$ are directly compatible.
\end{lemma}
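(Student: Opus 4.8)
The plan is to exhibit $r$, the closure of $p\cup q$ under intersections, as a condition of $\mathbb{M}$ extending both $p$ and $q$; direct compatibility is then immediate from the definition. Note first that $q\leq_{\mathbb{M}}res_Q(p)$ means, read as sets of models, $q\supseteq res_Q(p)$, and that $res_Q(p)\in\mathbb{M}$ by Lemma \ref{lem:why_d_M_is_a_condition}. The set $p\cup q$ is finite and lies in $K$, and its closure under intersections is still finite (every model it generates is $\bigcap S$ for some $\emptyset\neq S\subseteq p\cup q$, since $\cap$ is commutative, associative and idempotent) and belongs to $K$ by the closure properties assumed of $K$. Hence the verification that $r\in\mathbb{M}$ reduces to two points: that every model occurring in $r$ lies in $\mathcal{S}\cup\mathcal{T}$, and that the models of $r$ are pairwise $\in$-comparable, so that they can be listed as an $\in$-increasing sequence. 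Once $r\in\mathbb{M}$ is known, $r\supseteq p\cup q$ gives $r\leq_{\mathbb{M}}p$ and $r\leq_{\mathbb{M}}q$.

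First I would record that every model of $q$ is a member of $Q$: as $q\in Q\cap\mathbb{M}$ is a finite sequence, this follows by elementarity when $Q\in\mathcal{S}$ and by transitivity when $Q\in\mathcal{T}$. Consequently the models of $p$ split into $res_Q(p)=\{M\in p : M\in Q\}\subseteq q$ and the upper part $\{M\in p : M\notin Q\}$, each member $M$ of which satisfies $M=Q$ or $Q\in M$. The only pairs whose $\in$-comparability is not already guaranteed by $p,q\in\mathbb{M}$ are pairs $(M,N)$ with $M$ in the upper part of $p$ and $N\in q$, and for any such pair $N\in Q$. If $Q\subseteq M$ — which holds whenever $Q\in\mathcal{S}$ by Definition \ref{def:S_and_T}(2), and whenever $M,Q\in\mathcal{T}$ by transitivity — then $N\in Q\subseteq M$, so $N\in M$. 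The remaining configuration is $Q\in\mathcal{T}$, $M\in\mathcal{S}$ with $Q\in M$; here $N\subseteq Q$ by transitivity of $Q$, and by Definition \ref{def:S_and_T}(3) we have $M\cap Q\in\mathcal{S}$ and $M\cap Q\in Q$, so $M\cap Q\in res_Q(p)\subseteq q$. Since $q$ is a condition, $M\cap Q$ and $N$ are $\in$-comparable within $q$, and combining this comparison with $N\subseteq Q$ yields comparability of $M$ and $N$.

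With pairwise $\in$-comparability in hand, closure under intersections keeps us inside $\mathcal{S}\cup\mathcal{T}$: for $\in$-comparable models with $M\in N$ the intersection is $M$ itself, except when $M\in\mathcal{T}$ and $N\in\mathcal{S}$, in which case Definition \ref{def:S_and_T}(3) gives $M\cap N\in\mathcal{S}$. One then checks that the finitely many newly generated models remain $\in$-comparable with all the others, so that $r$ can be enumerated as a finite, $\in$-increasing, intersection-closed sequence; thus $r\in\mathbb{M}$. As $r$ is by construction the closure of $p\cup q$ under intersections and $r\supseteq p\cup q$, the conditions $p$ and $q$ are directly compatible.

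The main obstacle is the comparability argument of the second paragraph, precisely the mixed case in which a transitive $Q\in\mathcal{T}$ lies below a small $M\in\mathcal{S}$: there $Q\not\subseteq M$ in general, so one cannot simply invoke $N\in Q\subseteq M$, and the comparison of $M$ with the models of $q$ must instead be routed through the trace $M\cap Q$, which lands in $res_Q(p)\subseteq q$ by Definition \ref{def:S_and_T}(3) together with Lemma \ref{lem:why_d_M_is_a_condition}. The secondary delicate point is to confirm that the intersection-closure introduces no $\in$-incomparable model — not merely that each generated model has the correct type — and it is exactly here that the full strength of the appropriateness of $\mathcal{S}$ and $\mathcal{T}$ is used.
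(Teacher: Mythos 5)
The paper itself offers no proof of this lemma --- it is quoted from Neeman's \cite{Neeman} --- so your attempt can only be judged against the definitions recorded here. Your overall plan (show that the intersection-closure $r$ of $p\cup q$ is a condition extending both) is the right one, but the reduction in your first paragraph is wrong: you reduce ``$r\in\mathbb{M}$'' to \emph{pairwise} $\in$-comparability of the models of $r$. The definition of $\mathbb{M}$ only requires that \emph{consecutive} models of the sequence satisfy $M_{m-1}\in M_m$, and pairwise comparability genuinely fails for these conditions. Concretely, in exactly the configuration you flag as the main obstacle --- $Q\in\mathcal{T}$, $M\in\mathcal{S}$, $Q\in M$ --- the model $M\cap Q$ is a node of $p$ (hence of $res_Q(p)\subseteq q$), but in general $M\cap Q\notin M$: Definition \ref{def:S_and_T}(3) places $M\cap Q$ inside $Q$ and inside $\mathcal{S}$, not inside $M$ (compare $M\cap\omega_1\notin M$ for countable $M$), and $M\notin M\cap Q$ since $M\cap Q\subseteq M$. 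So when your case analysis reaches $N=M\cap Q$, the asserted comparability of $M$ and $N$ is false; the same happens for other $N\in q$ with $M\cap Q\in N\in Q$, which in the amalgamated condition sit below $Q$ while $M$ sits above it, with no $\in$-relation between them. The sentence ``combining this comparison with $N\subseteq Q$ yields comparability of $M$ and $N$'' is therefore not just unproved but unprovable, and the statement you set out to verify in that paragraph is not the one that needs verifying.

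Two further gaps. First, your claim that every $M\in p$ with $M\notin Q$ satisfies $M=Q$ or $Q\in M$ relies on the fact that order of appearance in a condition coincides with membership; this is true but is itself a structural lemma about $\mathbb{M}$ (proved by Neeman using closure under intersections), not an immediate consequence of the definition given here, which only posits consecutive memberships. Second, the actual content of the lemma --- determining exactly which new models $M\cap N$ arise from the closure, showing each lies in $\mathcal{S}\cup\mathcal{T}$, inserting them together with the nodes of $q$ into a single sequence in which consecutive models are $\in$-related, and verifying that the resulting set is still closed under intersections --- is deferred to ``one then checks.'' That deferred verification is where essentially all of the work of Neeman's proof lives, so as written the argument establishes neither the comparability it aims for nor the weaker property it actually needs.
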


\begin{corollary}
    \label{cor:p_cup_M}
    Let $M\in\mathcal{S}\cup\mathcal{T}$ and let $p\in M\cap\mathbb{M}$ then $\exists q \leq p$ such that $M\in q$. Moreover, $q$ can be taken to be the closure of $p\cup \left\{ M \right\}$ under intersections.
\end{corollary}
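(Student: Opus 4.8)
The plan is to show directly that the set $q$ obtained by closing $p \cup \{M\}$ under intersections is a condition in $\mathbb{M}$; once this is established the two remaining demands are immediate, since $q \supseteq p$ gives $q \leq_{\mathbb{M}} p$ and plainly $M \in q$. The single observation that drives everything is that, because $p \in M$ is a finite sequence and $M \prec K$, every model listed in $p$ is itself an element of $M$ (each entry is definable from $p$ and a finite index, both in $M$). In particular $M$ sits $\in$-above every model of $p$, so appending $M$ to the end of $p$ keeps the sequence $\in$-increasing.

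I would first dispose of the case $M \in \mathcal{T}$. Here $M$ is transitive, so each model $P$ of $p$, being an element of $M$, is also a subset of $M$; hence $M \cap P = P$, and closing $p \cup \{M\}$ under intersections produces no new models at all. Thus $q = p{}^\frown\langle M\rangle$, which is finite, lies in $K$, is $\in$-increasing, and is closed under intersections because $p$ already is and the only new intersections $M \cap P$ equal $P$.

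The substantive case is $M \in \mathcal{S}$. The new models created by closing under intersections are exactly $N_W := M \cap W$ as $W$ ranges over the (finitely many) $\mathcal{T}$-models occurring in $p$: for an $\mathcal{S}$-model $P$ of $p$ we have $P \in M$, so clause (2) of Definition \ref{def:S_and_T} gives $P \subseteq M$ and $M \cap P = P$, while for a $\mathcal{T}$-model $W$ of $p$ clause (3) gives $N_W \in \mathcal{S}$ and $N_W \in W$. To see that no further models arise I would use the identities $N_W \cap X = M \cap (W \cap X)$ for $X \in p$ and $N_W \cap N_{W'} = M \cap (W \cap W')$; since $p$ is closed under intersections, $W \cap X$ and $W \cap W'$ already belong to $p$, and the previous paragraph shows that $M$ meets each member of $p$ either trivially or in some $N_{W''}$. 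Hence $q = p \cup \{M\} \cup \{N_W : W \in \mathcal{T}\cap p\}$ is finite and closed under intersections, and it belongs to $K$ by the appropriateness clause guaranteeing that the closure of a member of $K$ under intersections lies in $K$.

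It remains to arrange $q$ as an $\in$-increasing sequence, and this is the step I expect to be the main obstacle, because $N_W = M \cap W$ need not be an element of $M$, so $N_W$ and $M$ are typically $\in$-incomparable and only the comparabilities between \emph{consecutive} entries are available. The arrangement I would use keeps the original ordering of $p$, places $M$ at the top, and inserts each $N_W$ immediately before its witness $W$. Consecutivity then reduces to two checks: that $N_W \in W$, which is clause (3); and, writing $P$ for the $p$-predecessor of $W$, that $P \in N_W = M \cap W$, which holds because $P \in M$ (every model of $p$ is in $M$) and $P \in W$ (consecutivity in $p$). The same two facts handle runs of several consecutive $\mathcal{T}$-models. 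This yields $q \in \mathbb{M}$ with $q \leq_{\mathbb{M}} p$ and $M \in q$, as required.
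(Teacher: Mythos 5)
Your proof is correct, but it takes a different route from the one the paper intends. The paper states this corollary without proof (it is quoted from Neeman's framework), and the intended derivation is a one-line application of Lemma \ref{lem:q_leq_res}: the singleton sequence $\langle M\rangle$ is a condition, $\mathrm{res}_M(\langle M\rangle)=\emptyset$, and $p\in M\cap\mathbb{M}$ trivially extends the empty condition, so $p$ and $\langle M\rangle$ are directly compatible, which is exactly the assertion. You instead verify direct compatibility from scratch, and your verification is sound: every entry of $p$ lies in $M$ because $p$ is a finite sequence in $M\prec K$; clause (2) of Definition \ref{def:S_and_T} kills the intersections with $\mathcal{S}$-models of $p$, clause (3) handles the $\mathcal{T}$-models, and the identity $(M\cap W)\cap X=M\cap(W\cap X)$ together with closure of $p$ under intersections shows the process stabilizes after one round. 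Your approach buys something the short derivation does not: an explicit description of the closure as $p\cup\{M\}\cup\{M\cap W : W\in p\cap\mathcal{T}\}$ with each new model an intersection of $M$ with a single member of $p$, which is precisely the structural fact the paper later uses when verifying that $\tilde r=\langle f_p,\vec M_r\rangle$ is a condition in the proof of Lemma \ref{lemma:P_is_proper}. Two small points to tighten: the new models are \emph{at most} (not ``exactly'') the sets $M\cap W$, since some may coincide with each other or already lie in $p$; and the recipe ``insert $N_W$ immediately before $W$'' should be restricted to those $N_W$ not already occurring in $p$, as an old $N_W$ already occupies its correct (possibly non-adjacent) position in the $\in$-increasing order. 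Neither affects the argument.
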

    
\section{Every Kurepa tree is not-sealed}\label{section:distributive-trees}

Our aim in this section is to construct a forcing that gives us a model in which every Kurepa tree is not sealed. In the first part we deal with Solovay's tree in $L$ and show that it is distributive there. In the second part, we assume the existence of an inaccessible cardinal, and construct a model in which there are Kurepa trees (and even $\diamondsuit^+$ holds) but for every Kurepa tree there is a forcing notion of size $\aleph_1$ that introduces a new branch.

\subsection{Distributivity of Solovay's Tree}
In this subsection we show that the Kurepa tree that Solovay constructed in $L$ is distributive in $L$. This implies that the result of Section \ref{section:sealing-trees-forcing} is non vacuous. 

\begin{theorem}\label{thm:solovay-tree-is-distributive} Assuming $V = L$, $\mathbb{T}$ adds a branch to Solovay's Kurepa tree (the tree which was declared in Theorem \ref{thm:solovay}) without collapsing $\omega_1$ or $\omega_2$.
\end{theorem}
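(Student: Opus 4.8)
The plan is to prove the stronger statement that $\mathbb{T}$ is distributive --- i.e.\ that it adds no new $\omega$-sequences of ordinals --- and to treat the two cardinals separately. Since $T$ is an $\omega_1$-tree we have $|\mathbb{T}| = |T| = \aleph_1$, so every antichain has size at most $\aleph_1$ and $\mathbb{T}$ is $\aleph_2$-c.c.; this already preserves all cardinals and cofinalities $\geq \aleph_2$. If in addition $\mathbb{T}$ adds no new function $\omega \to \omega_1$, then $\omega_1$ is not collapsed, and combined with the $\aleph_2$-c.c.\ this shows $\omega_2^V$ remains a cardinal above $\omega_1^V = \omega_1^{V[G]}$, so neither cardinal is collapsed. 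That the generic branch is \emph{new} is immediate: the generic filter $G$ is a cofinal chain, and (assuming, as we may, that Solovay's tree splits cofinally above every node) for each ground-model branch $b$ the set $\{x \in T : x \notin b\}$ is dense open, so $G$ meets it and hence $G \neq b$; as this holds for every $b \in [T]^V$, we get $G \notin [T]^V$.

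The heart of the matter is distributivity. Fix a condition $p$ and dense open sets $\langle D_n : n < \omega\rangle$; I must find $q \leq_{\mathbb{T}} p$ with $q \in \bigcap_n D_n$. Choose a countable $M \prec H(\omega_2)$ with $T, p, \langle D_n \rangle \in M$, and set $\delta = M \cap \omega_1$. Working inside $M$, build a $<_{\mathbb{T}}$-descending (equivalently $<_T$-ascending) sequence $p = x_0 >_{\mathbb{T}} x_1 >_{\mathbb{T}} \cdots$ that is $M$-generic, by enumerating the countably many dense subsets of $\mathbb{T}$ lying in $M$ and meeting each in turn; this is possible since each such set is dense and $M$ is closed under the relevant choices, and the witnesses may be taken in $M$. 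Using that for each $\beta < \delta$ the set $\{x \in T : \level(x) > \beta\}$ is dense and lies in $M$, the levels $\level(x_n)$ are cofinal in $\delta$. Hence $b_\delta := \{t \in T : \exists n\; t \leq_T x_n\}$ is a cofinal branch of $T \restriction \delta$ meeting every $D_n$.

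The decisive step is to produce an upper bound for $b_\delta$ at level $\delta$: a node $x_\delta \in T_\delta$ with $x_n \leq_T x_\delta$ for all $n$. Granting this, $x_\delta$ extends some element of $D_n$ for each $n$, so by openness $x_\delta \in \bigcap_n D_n$, and $x_\delta \leq_{\mathbb{T}} p$, completing the argument. To obtain $x_\delta$ I will use the specific construction of Solovay's tree from Theorem \ref{thm:solovay}. The branch $b_\delta$ is definable from $M$ and the generic sequence, hence lies in $L$ and in fact appears in some $L_\gamma$ with $\gamma$ bounded in terms of the transitive collapse of $M$; Solovay's construction is arranged so that $T_\delta$ consists of (codes of) exactly those cofinal branches of $T \restriction \delta$ constructed by that stage. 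Concretely, in the $\diamondsuit^+$-driven version the guessing family at $\delta$ reflects $b_\delta \cap \delta$, forcing $b_\delta$ to be realized in $T_\delta$.

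The main obstacle is precisely this last step: matching the $M$-generic branch $b_\delta$ with a node of $T_\delta$. This requires unwinding the level-by-level definition of Solovay's tree and verifying that the closure imposed at limit levels $\delta$ is exactly strong enough to capture every branch that is $M$-generic for $\delta = M \cap \omega_1$; this is the point where the fine-structural ($V = L$) features of the construction are genuinely used. Everything else --- the $\aleph_2$-c.c., the genericity construction, and the bookkeeping that the levels $\level(x_n)$ are cofinal in $\delta$ --- is routine.
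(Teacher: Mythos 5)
Your overall skeleton matches the paper's: prove $\sigma$-distributivity via a countable elementary submodel $M$ with $\delta = M\cap\omega_1$, note that $|\mathbb{T}|=\aleph_1$ gives the $\aleph_2$-c.c., and reduce everything to showing that the $M$-generic branch $b_\delta$ through $T\restriction\delta$ has an upper bound in $T_\delta$. But that last step is exactly where your argument stops, and you say so yourself (``the main obstacle is precisely this last step''). It is not a routine verification to be deferred --- it is the entire content of the theorem, and the hints you offer for filling it are not the right ones. In particular, the appeal to ``the $\diamondsuit^+$-driven version'' in which ``the guessing family at $\delta$ reflects $b_\delta\cap\delta$'' does not work as stated: a $\diamondsuit^+$-sequence guesses initial segments of subsets of $\omega_1$ that are fixed in advance, whereas $b_\delta$ is a set manufactured from an external countable model $M$ and an enumeration of its dense sets; nothing guarantees it is guessed at $\delta$ unless the construction of $b_\delta$ is arranged very carefully. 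Likewise, observing that $b_\delta\in L$ is vacuous under $V=L$; the issue is \emph{which} $L_\gamma$ it lands in.

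What the paper does to close this gap, and what is missing from your write-up, is the following. First, the descending sequence is chosen \emph{canonically}: $\overline{t}_{n+1}$ is the $\leq_L$-least member of $\pi(D_n)$ below $\overline{t}_n$, where $\pi$ collapses $M$ onto some $L_\zeta$; this makes the entire sequence definable inside $L_\zeta$, not merely ``definable from $M$ and the generic sequence.'' Second, a condensation computation shows $\zeta < f(\delta)$ (since $L_{f(\delta)}\prec L_{\omega_1}$ sees that $\delta$ is countable while $L_\zeta$ thinks $\delta=\omega_1$), hence $L_\zeta\in L_{f(\delta)}$ and the set $Y$ coded by the branch lies in $L_{f(\delta)}$ --- this is the precise bound needed, since level $\delta$ of Solovay's tree consists of the restrictions $g_X\restriction\delta$ for $X$ in the Kurepa family $\mathcal{F}$. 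Third, membership in $\mathcal{F}$ is a condition on \emph{all} initial segments, so one must also verify $Y\cap\beta\in L_{f(\beta)}$ for every $\beta<\delta$, which requires arranging the levels of the $t_n$ to be cofinal in $\delta$ and using that each $X_n\cap\beta\in L_{f(\beta)}$. None of these three ingredients appears in your proposal, so as it stands it is a correct plan with the decisive lemma unproved.
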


\begin{proof}
    We present briefly the construction of the tree (without proof that it is Kurepa). Define a function

    \begin{equation*}
        f(\alpha) = \min \left\{ \gamma\, \bigg|\, \alpha \in L_\gamma \prec L_{\omega_1} \right\}
    \end{equation*}

    Using this $f$ we define a Kurepa family (namely, a family of size $\aleph_2$ with countable restrictions).

    \begin{equation*}
        \mathcal{F} = \left\{ X\subset \omega_1\, \bigg|\, \forall
        \alpha <\omega_1, X\cap \alpha \in L_{f(\alpha)} \right\}
    \end{equation*}

    Using this family we define a Kurepa tree constructed from the functions $g_X(\alpha) = X\cap\alpha$

    \begin{equation*}
        T = \left\{ g_X\restriction\beta\, \bigg|\, \beta <\omega_1 \right\}
    \end{equation*}

    Let $\mathbb{T}$ be the tree forcing, we want to show $\mathbb{T}$ is $\omega_1$-distributive. Let $\vec{D} = \langle  D_n\: \bigg|\: n <\omega\rangle$ be some collection of open dense sets and let $M\prec L_{\omega_2}$ be a countable elementary submodel such that $T, \vec{D}\in M$. Let $\pi: M\rightarrow \overline{M}$ be the transitive collapse and suppose $M\cap\omega_1 = \gamma$. Note that $\mathrm{crit}\ \pi^{-1} = \gamma$, so for every $t\in T \restriction \gamma$, $t\in M$ and $\pi(t) = t$.\footnote{The notation $\mathrm{crit }\pi^{-1}$ refers to the \emph{critical point} of $\pi$. The critical point of an elementary embedding between (transitive) models of fraction of set theory is the least ordinal that moves.}
    
    Since

    \begin{equation*}
        M\models D_n \text{ are open dense} \Longrightarrow \overline{M}\models \pi(D_n)\text{ are open dense}
    \end{equation*}

    by \ref{lem:T_height_in_M} $\pi(T) = T\restriction\gamma$ and from the condensation lemma

    \begin{equation*}
        \exists\zeta<\omega_1,\: \overline{M} = L_\zeta\models\gamma = \omega_1\wedge V = L
    \end{equation*}

    Since $\gamma \in L_\zeta$, we claim that $L_\zeta\in L_{f(\gamma)}$. Indeed, since $L_{f(\gamma)}\prec L_{\omega_1}$, $\gamma \in L_{f(\gamma)}$ and $L_{\omega_1}\models |\gamma| = \aleph_0$ we have by elementarity that $L_{f(\gamma)} \models |\gamma|=\aleph_0$. In particular $f(\gamma) > \zeta$, as in $L_\zeta$, $\gamma$ is uncountable and thus the function witnessing $|\gamma|=\aleph_0$ cannot be a member of $L_{\zeta}$ and must be constructed at some level between $\zeta$ and $f(\gamma)$. 
    
    Let $t_0\in L_{f(\gamma)}\cap \mathbb{T}$. Take $\overline{t}_{n + 1}\in \pi(D_n)$ be the first in the $\leq_L$ ordering such that $\overline{t}_{n + 1} \leq_{\mathbb T} \overline{t}_n$. Define $t_n = \pi^{ -1}(\overline{t}_n)$ then $t_{n + 1}\in D_n$. Since $\pi(\vec{D})\in \overline{M}\subset L_{f(\gamma)}$ the sequence $\langle  \overline{t}_n\, \big|\, n <\omega\rangle\in \overline{M}$. Remember that $\pi(t) = t$, thus $\langle  t_n\, \big|\, n <\omega\rangle\in \overline{M}$, denote $t_n = g_{X_n}\restriction\alpha_n$ and consider

    \begin{equation*}
        \bigcup g_{X_n}\restriction \alpha_n = g_{\bigcup X_n\cap \alpha_n}
    \end{equation*}

    We want to show that $g_{\bigcup X_n\cap \alpha_n}\in L_{f(\gamma)}$. Since $\mathrm{cf}\, \gamma = \omega$ and $\pi(D_n)$ are open dense we can impose another restriction on $t_n$, and assume that they satisfy $\sup \alpha_n = \gamma$. Let $\beta < \gamma$ consider $\left(\bigcup X_n\cap \alpha_n\right)\cap \beta = (\bigcup X_n) \cap \beta $. By construction
    \begin{equation*}
        \forall n <\omega,\, X_n\cap\beta \in L_{f(\beta )}\wedge L_{f(\beta)}\prec L_{\omega_1}
    \end{equation*}

    Thus, $\langle  X_n\cap \beta\: \big|\: n <\omega\rangle\in L_{f(\beta)}$ therefore $\bigcup X_n\cap\beta\in L_f(\beta)$ and $\bigcup X_n \cap \alpha_n \in \mathcal{F}$.

    Since $\mathbb{T}$ is distributive it does not add any reals and in particular does not collapse $\omega_1$.

    As $|\mathbb{T}| = \aleph_1$, it is trivially $\aleph_2$-c.c.
\end{proof}

\subsection{A model without sealed trees}
In this subsection we intend to obtain two slightly different models in which for every Kurepa tree $T$ there is a $\sigma$-distributive forcing notion $\mathbb{Q}$ of cardinality $\aleph_1$ that adds a branch through $T$. By the result of Shelah and Po{\'o}r, \cite{Poor}, an inaccessible cardinal is required. 

Our first model is going to be constructed by first collapsing an inaccessible cardinal to be $\aleph_2$ and then introducing a generic Kurepa tree, $T$. In this model, there is a forcing notion which is a countable support product of length $\omega_1$, of adding a generic branch to $T$, that introduces a new branch to any Kurepa tree. 

In our second variant, instead of adding a generic Kurepa tree we will add a generic $\diamondsuit^+$-sequence. In this variant, the forcing for introducing a new branch to a Kurepa tree depends on the Kurepa tree.

The following forcing notion was introduces by Jech.
\begin{definition}
    \label{def:K_forcing_notion}
    Let $\mathbb{K}_\kappa$ be the following forcing notion. 
    
    $p\in \mathbb{K}_\kappa$ if and only if $p = \langle  T_p, b_p \rangle$ where $T_p$ is an initial segment of an $\omega_1$-tree with a maximal level and $b\colon \kappa\rightarrow T_{\max}$, where $T_{\max}$ is the maximal level of $T_p$ and $|b|<\aleph_1$. 
    
    Define the partial order $\leq_{\mathbb{K}_{\kappa}}$ such that $p\leq_{\mathbb{K}_\kappa} q$ if and only if $T_q$ is an initial segment of $T_p$ (which we denote by $T_q \trianglelefteq T_p$), $\dom b_q\subset\dom b_p$ and $\forall \alpha \in \dom b_q,\, b_q(\alpha)\leq_{T_p} b_p(\alpha)$.
\end{definition}

Jech showed that $\mathbb{K}_{\kappa}$ is $\sigma$-closed and assuming $\CH$ it is $\lambda$-Knaster for every regular $\lambda$ such that $\aleph_2 \leq \lambda \leq \kappa$. Thus, assuming \textbf{CH}, $\mathbb{K}_{\kappa}$ adds a Kurepa tree with $\kappa$ many branches without collapsing cardinals. 

In the construction ahead we will use the definition of $\mathbb{K}_\kappa$ for either $\kappa = \aleph_2$ or $\kappa$ inaccessible cardinal (which will be collapsed to $\aleph_2$). 

As we will need some specific details about the $\sigma$-closure of the forcing, we include here the standard proof. Let us fix now a regular cardinal $\kappa \geq \omega_2$.
\begin{lemma}
    \label{lem:K_sigma_closed}
    $\mathbb{K}_\kappa$ is $\sigma$-closed.
\end{lemma}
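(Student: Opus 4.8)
The plan is to take an arbitrary descending sequence $\langle p_n \mid n < \omega\rangle$ with $p_n = \langle T_n, b_n\rangle$ and $p_{n+1} \leq_{\mathbb{K}_\kappa} p_n$, and to build an explicit lower bound $p = \langle T, b\rangle$. First I would record the coherence that the order relation provides: the trees form a $\trianglelefteq$-increasing chain of initial segments, the domains increase, $\dom b_n \subseteq \dom b_{n+1}$, and for each $\alpha$ and all $m \leq n$ with $\alpha \in \dom b_m$ we have $b_m(\alpha) \leq_{T_n} b_n(\alpha)$; moreover $b_n(\alpha)$ lies on the maximal level of $T_n$, say at height $\eta_n$. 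Then I set $T' = \bigcup_n T_n$, let $D = \bigcup_n \dom b_n$ (a countable union of countable sets, hence countable), and put $\delta = \sup_n \eta_n$. For each $\alpha \in D$, the nodes $b_n(\alpha)$ (for $n$ past the stage at which $\alpha$ enters the domain) form a $\leq_{T'}$-increasing chain whose heights are cofinal in $\delta$, and therefore determine a branch $c_\alpha$ of $T'$ of height $\delta$.

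Next I would split according to $\delta$. If $\delta = \eta_N$ for some $N$, the heights are eventually constant, which forces $T_n = T_N$ for all $n \geq N$ (a $\trianglelefteq$-extension of equal height is an equality) and forces each $b_n(\alpha)$ to be eventually constant, since the maximal level of the fixed tree $T_N$ is an antichain and $b_m(\alpha) \leq_{T_N} b_n(\alpha)$ then forces equality; here I simply take $T = T_N$ and $b = \bigcup_n b_n$. In the remaining case $\delta$ is a limit ordinal of cofinality $\omega$, and $T'$ has height $\delta$ with no maximal level, so I manufacture one: I add a new top level at height $\delta$ containing, for each $\alpha \in D$, a single node $x_\alpha$ placed immediately above the branch $c_\alpha$ (with $x_\alpha = x_{\alpha'}$ exactly when $c_\alpha = c_{\alpha'}$), together with one further node above some fixed cofinal branch in the degenerate case $D = \varnothing$ so that the top level is nonempty. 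I then set $T = T' \cup \{x_\alpha \mid \alpha \in D\}$ (plus the auxiliary node if needed) and $b(\alpha) = x_\alpha$.

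Finally I would verify that $p = \langle T, b\rangle$ is a condition and a lower bound. Each level of $T$ below $\delta$ is a level of some $T_n$, hence countable, while the new top level has at most $|D| + 1 \leq \aleph_0$ nodes; so all levels are countable and $T$ has a maximal level. It is an initial segment of an $\omega_1$-tree, since one extends it to height $\omega_1$ by running a single branch upward from each top node (recall $\omega_1$-trees need not be normal, so nodes of $T'$ failing to reach level $\delta$ cause no difficulty). Since $\dom b = D$ is countable, $|b| < \aleph_1$. For $p \leq_{\mathbb{K}_\kappa} p_n$ I check the three clauses: $T_n \trianglelefteq T' \trianglelefteq T$; $\dom b_n \subseteq D = \dom b$; and for $\alpha \in \dom b_n$ the node $b_n(\alpha)$ lies on $c_\alpha$, whence $b_n(\alpha) \leq_T x_\alpha = b(\alpha)$.

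I expect the one genuinely delicate point to be the construction of the maximal level at the limit stage: one must see that the witnesses $b_n(\alpha)$ really cohere into a single cofinal branch $c_\alpha$, so that a single node $x_\alpha$ placed above it serves simultaneously for every condition $p_n$, and that only countably many such branches arise, so that adding the top level keeps every level countable and leaves $T$ an initial segment of an $\omega_1$-tree. The coherence is precisely what the clause $b_m(\alpha) \leq_{T_n} b_n(\alpha)$ of the order delivers, and the cardinality bound is exactly the countability of $D$; these are also the \emph{specific details} of the construction that the later arguments will rely upon.
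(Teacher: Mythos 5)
Your proof is correct and follows essentially the same route as the paper's: take the union of the trees, split on whether a maximal level already exists, and otherwise manufacture a new top level by placing one node above each of the (countably many) branches $c_\alpha$ determined by the coherent chains $b_n(\alpha)$. The only nitpick is in your first case: $\bigcup_n b_n$ taken over \emph{all} $n$ need not be a function, since $b_m(\alpha)$ for $m < N$ lies on a lower level than $b_N(\alpha)$; you should take $\bigcup_{n \geq N} b_n$ (equivalently, the eventual values), which is exactly what the paper does by restricting to those $n$ with $\max T_n = \max T_\omega$.
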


\begin{proof}
    Let $\langle  p_n \mid n <\omega \rangle$ be a sequence of decreasing conditions in $\mathbb{K}_\kappa$. Let $p_n =\langle  T_n, b_n\rangle$, and define $T_\omega = \bigcup_{n<\omega}T_n$. We would like to verify that $T_{\omega}$, maybe after adding some nodes, can be the tree part of a condition. 
    
    Since $T_n$ is an initial segment of $T_{n + 1}$ we get that $T_\omega$ is a tree. Moreover, as for every $n < \omega$, $\height(T_n) <\omega_1$, the height of $T_\omega$ is $\sup_{n < \omega} \height(T_n)$ which is countable as well.
    
We separate into two cases. If $T_\omega$ has a maximal level (i.e.\ $\height(T_\omega)$ is a successor ordinal), then there must be $n < \omega$ with $\height(T_n) = \height(T_\omega)$. Take
   \begin{equation*}
        b_\omega =\bigcup_{\max T_n = \max T_\omega} b_n,
    \end{equation*}

    and define the condition $p_\omega =  \langle  T_\omega, b_\omega \rangle$. This condition satisfies $\forall n < \omega,\, p_\omega \leq p_n$.

    If $T_\omega$ does not have a maximal level, we must add one to it in order to obtain a condition. Let us build this additional top level, using the branch functions $b_n$. 
    
    Consider $X = \bigcup_{n< \omega} \dom b_n$.     
    For every ordinal  $\alpha \in X$ we can look at

    \begin{equation*}
        B_\alpha = \left \{ b_n(\alpha) \mid n < \omega\wedge \alpha\in \dom b_n\right \}
    \end{equation*}

    Define $t_\alpha$ to be the element in the level $T_{\max}$ above the branch generated by $B_\alpha$. Explicitly, we define $t_\alpha$ s.t.\ $\forall x\in B_\alpha,\, x\leq t_\alpha$ and $T_{\max} = \left\{ t_\alpha\mid\alpha\in X \right\}$. We then define $T_{\omega + 1} = T_\omega \cup T_{\max}$ and $b_{\omega + 1} \colon X\rightarrow T_{\max}$ by $b_{\omega +1}(\alpha) = t_\alpha$. This $p_{\omega +1} = \langle T_{\omega +1}, b_{\omega + 1} \rangle $ is a condition in $\mathbb{K}_\kappa$ and satisfies $\forall n <\omega,\, p_{\omega +1} \leq p_n$.
\end{proof}

Let $G \subseteq \mathbb{K}_{\kappa}$ be a generic filter. Then, in $V[G]$ there is a tree $T = \bigcup_{\langle  t, b\rangle \in G} t$, and $B \subseteq T$ is a branch if and only if there is $\alpha$ such that $B = \{b(\alpha) \mid \langle  t,b \rangle \in G, \alpha \in \dom b\}$.
\begin{definition}
    \label{def:K_rest_delta}
    Let $\omega_1\leq \delta <\kappa$ be some ordinal, we define $\mathbb{K}_\delta$ to be

    \begin{equation*}
        \mathbb{K}_\delta = \left\{ r\in \mathbb{K}_\kappa\, \Big|\, \dom b_r\subseteq\delta \right\}
    \end{equation*}

    And we define $\mathbb{K}_{[\delta,\kappa)}$ in the generic extension by $\mathbb{K}_\delta$ to be
    \begin{equation*}
        \mathbb{K}_{[\delta,\kappa)} = \left\{ b_r \, \Big|\, r\in \mathbb{K}_\kappa, \dom \: b_r \subseteq [\delta, \kappa)\right\}
    \end{equation*}
\end{definition}

\begin{remark}
    Notice that the conditions in $\mathbb{K}_{[\delta,\kappa)}$ consists only of the branch function part. This is because $\mathbb{K}_{[\delta,\kappa)}$ is defined in the generic extension by $\mathbb{K}_\delta$ in which the tree part will be entirely determined.
\end{remark}

\begin{lemma}
    \label{lem:K_sim_K_delta}
    $\mathbb{K}_\kappa \simeq \left(\mathbb{K}_\delta\right) \ast \dot{\mathbb{K}}_{[\delta, \kappa)}$.
\end{lemma}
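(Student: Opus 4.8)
The plan is to exhibit a dense embedding $\pi \colon \mathbb{K}_\kappa \to \mathbb{K}_\delta \ast \dot{\mathbb{K}}_{[\delta,\kappa)}$, which suffices for forcing equivalence. For a condition $p = \langle T_p, b_p\rangle \in \mathbb{K}_\kappa$ write $b_p^- = b_p\restriction\delta$ and $b_p^+ = b_p\restriction[\delta,\kappa)$, and set
\begin{equation*}
    \pi(p) = \big\langle \langle T_p, b_p^-\rangle,\ \check{b}_p^+\big\rangle.
\end{equation*}
First I would check this is well defined: $\langle T_p, b_p^-\rangle$ is a legitimate member of $\mathbb{K}_\delta$ since $\dom b_p^- \subseteq \delta$ and $b_p^-$ still maps into the top level of $T_p$; and since the $\mathbb{K}_\delta$-generic determines a tree $T \trianglerighteq T_p$, below $\langle T_p, b_p^-\rangle$ the ground-model function $b_p^+$ is forced to be a branch function into a genuine level of $T$, i.e.\ a condition of $\mathbb{K}_{[\delta,\kappa)}$.

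Order-preservation is routine and amounts to splitting the domain. If $p \leq_{\mathbb{K}_\kappa} q$ then $T_q \trianglelefteq T_p$, $\dom b_q \subseteq \dom b_p$ and $b_q(\alpha) \leq_{T_p} b_p(\alpha)$ for $\alpha \in \dom b_q$; restricting to $\delta$ gives $\langle T_p, b_p^-\rangle \leq_{\mathbb{K}_\delta}\langle T_q, b_q^-\rangle$, and restricting to $[\delta,\kappa)$ gives $\langle T_p, b_p^-\rangle \Vdash \check{b}_p^+ \leq \check{b}_q^+$ (using that $\leq_{T_p}$ agrees with $\leq_T$ on the nodes of $T_p$). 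Hence $\pi(p)\leq \pi(q)$. In particular, if $p,q$ have a common extension $r$ then $\pi(r)$ witnesses $\pi(p)\parallel\pi(q)$, so compatibility is preserved.

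The main work is in the two remaining requirements, reflection of compatibility and density, both of which rest on the same construction; I expect the density step to be the principal obstacle. Given $\langle s, \dot c\rangle \in \mathbb{K}_\delta \ast \dot{\mathbb{K}}_{[\delta,\kappa)}$, I would first decide the name $\dot c$. Since $s$ forces $\dot c$ to be a branch function with countable domain into the generic tree, and since $\mathbb{K}_\delta$ is $\sigma$-closed (Lemma \ref{lem:K_sigma_closed}), I can build a descending $\omega$-sequence below $s$ deciding $\dom \dot c$ together with each of its values and take a lower bound $s' = \langle T_{s'}, b_{s'}\rangle \leq s$ forcing $\dot c = \check{c}$ for a concrete ground-model branch function $c$ with $\dom c \subseteq [\delta,\kappa)$. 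Because $s'$ forces every node in $\range c$ into the tree, those nodes already lie in $T_{s'}$, so $c$ maps into some level $L \subseteq T_{s'}$, while $b_{s'}$ maps into the top level of $T_{s'}$.

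To assemble a single condition of $\mathbb{K}_\kappa$ I would then extend $T_{s'}$ to a tree $T_r \trianglerighteq T_{s'}$ with one fresh top level in which every relevant node --- each $b_{s'}(\alpha)$ and each $c(\alpha)$ --- has an extension, and define $b_r$ on $\dom b_{s'} \cup \dom c$ by sending each such $\alpha$ to the chosen top-level node above $b_{s'}(\alpha)$ or $c(\alpha)$. The resulting $r = \langle T_r, b_r\rangle$ lies in $\mathbb{K}_\kappa$, and by construction $\langle T_r, b_r^-\rangle \leq_{\mathbb{K}_\delta} s'$ while $\langle T_r, b_r^-\rangle \Vdash \check{b}_r^+ \leq \check{c}$, so $\pi(r) \leq \langle s', \check c\rangle \leq \langle s, \dot c\rangle$, giving density of $\pi[\mathbb{K}_\kappa]$. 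Running the same assembly starting from a common extension $\langle s, \dot c\rangle$ of $\pi(p)$ and $\pi(q)$ produces a condition $r \leq_{\mathbb{K}_\kappa} p, q$, which shows that $\pi(p)\parallel\pi(q)$ implies $p \parallel q$, i.e.\ $\pi$ reflects compatibility. Thus $\pi$ is a dense embedding and $\mathbb{K}_\kappa \simeq \mathbb{K}_\delta \ast \dot{\mathbb{K}}_{[\delta,\kappa)}$. The one point demanding care is the level alignment in this last step: one must ensure the new top level genuinely revives the (possibly dead-end) branches through the nodes $c(\alpha)$, which is exactly why a fresh level, rather than $T_{s'}$ itself, is used.
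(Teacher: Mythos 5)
Your map is exactly the composition of the paper's two embeddings: the paper first exhibits $\langle T,b\rangle\mapsto\langle T,b\restriction\delta\rangle$ as a projection onto $\mathbb{K}_\delta$, passes to the generic quotient $\dot{\mathbb{Q}}$ via the standard dense embedding $p\mapsto(\pi(p),\check{p})$, and then replaces $\check{p}$ by $\check{b}_p\restriction[\delta,\kappa)$; you collapse this into the single map $p\mapsto\bigl\langle\langle T_p,b_p\restriction\delta\rangle,\ \check{b}_p\restriction[\delta,\kappa)\bigr\rangle$ and check directly that it is a dense embedding. This is the same argument in a more self-contained packaging, and on the density step you are in fact more explicit than the paper, which merely ``chooses a condition which determines $\dot{b}_\star$'' where you correctly invoke the $\sigma$-closure of $\mathbb{K}_\delta$ (Lemma \ref{lem:K_sigma_closed}) to decide the countable domain of $\dot{c}$ together with all of its values.

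The one claim that is wrong as written is your closing remark that a fresh top level ``revives'' dead-end nodes. In a tree, a node of the new maximal level has exactly one predecessor at \emph{every} earlier level, in particular at the old maximal level of $T_{s'}$; so if some $c(\alpha)$ sits at a level strictly below the top of $T_{s'}$ and has no extension at the top level of $T_{s'}$, then no end-extension of $T_{s'}$ whatsoever contains a top-level node above $c(\alpha)$, and your $b_r(\alpha)$ does not exist. What the construction actually requires is that each $c(\alpha)$ admit an extension at the current top level of $T_{s'}$; this can be arranged, e.g., by interleaving into the fusion sequence that decides $\dot{c}$ the promise that each already-decided value is kept alive up to each new top level (the limit step of Lemma \ref{lem:K_sigma_closed} lets you put the new maximal level above the branches through these nodes), or by reading the definition of $\mathbb{K}_{[\delta,\kappa)}$ so that its conditions' values are never dead ends. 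To be fair, the paper's own proof silently assumes the same alignment when it declares $(T',b'\cup b_\star)$ to be a condition without checking that $b_\star$ maps into the maximal level of $T'$, so this is a shared gap rather than a defect of your route; but the specific mechanism you offer to close it does not work, and this is precisely the point you yourself identified as the one demanding care.
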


\begin{proof}
    We build a projection map $\pi \colon \mathbb{K}_\kappa \rightarrow \mathbb{K}_\delta$

    \begin{equation*}
        \pi \left(\langle  T, b\rangle\right) = \langle  T, b \restriction \delta\rangle
    \end{equation*}

    We show it is indeed a projection, let $p,q\in \mathbb{K}_\kappa$ be conditions such that $p\leq q$. By definition, $T_q \triangleleft T_p$, $\dom b_q\subset\dom b_p$ and $\forall \alpha\in \dom b_p,\, b_q(\alpha) = b_p(\alpha)$. Consider $\pi(p),\pi(q)$ since the tree part does not change and  $\delta\cap\dom b_q\subset\delta\cap\dom b_p$, we have $\pi(p)\leq\pi(q)$.

    Let $p\in \mathbb{K}_\kappa$ and let $q\leq \pi(p)$, consider $\langle  T_q, b_q\cup b_p^\star\rangle $ (where $b_p^\star$ sends $\delta \leq  \alpha$ to some element in the maximal level of $T_q$ over the respective $b_p(\alpha)$ in the maximal level of $T_p$) since $b_q\in \mathbb{K}_\delta$ this is a condition in $\mathbb{K}_\kappa$ and $\pi(\langle  T_q, b_q\cup b_p^\star\rangle) = q\leq q$. Hence, $\pi$ is a projection.

    Let $H\subset \mathbb{K}_\delta$ be a generic filter and let

    \begin{equation*}
        \mathbb{Q} = \left\{ p\in \mathbb{K}_\kappa\, \Big|\, \pi(p)\in H \right\}
    \end{equation*}

    Denote by $\dot{\mathbb{Q}}$ the $\mathbb{K}_\delta$-name for $\mathbb{Q}$. We define an embedding $\iota\colon\mathbb{K}_\kappa\rightarrow (\mathbb{K}_\delta) \ast \dot{\mathbb{Q}}$ via $\iota(p) = (\pi(p),\dot{p})$ where $\mathbbm{1}_{\mathbb{K}_\delta}\Vdash_{\mathbb{K}_\delta} \dot{p}\in \dot{\mathbb{Q}}$ and $\pi(p)\Vdash_{\mathbb{K}_\delta} \dot{p} = \check{p}$. This is well known to be a dense embedding. Therefore, $\mathbb{K}_\kappa \simeq \mathbb{K}_\delta \ast \dot{\mathbb{Q}}$. We define another embedding $\iota' \colon \iota(\mathbb{K})\rightarrow\mathbb{K}_\delta \ast \dot{\mathbb{K}}_{[\delta, \kappa)}$ via $\iota'(\pi(p), \check{p}) = (\pi(p), \check{b_p}\restriction [\delta, \kappa))$\footnote{This definition applies for a dense subset of $\iota(\mathbb{K})$, and can be extended to the whole of $\iota(\mathbb{K})$.}. 
\end{proof}

\begin{lemma}
    $\iota'\colon \iota\left(\mathbb{K}_\kappa\right)\rightarrow\mathbb{K}_\delta \ast \dot{\mathbb{K}}_{[\delta, \kappa)}$ is a dense embedding.
\end{lemma}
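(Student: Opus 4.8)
The plan is to verify the three defining properties of a dense embedding: order preservation, preservation and reflection of incompatibility, and density of the range. Since $\iota'$ alters only the second coordinate, and since by the footnote it suffices to work on the dense set of conditions of the form $(\pi(p),\check p)$ with $p\in\mathbb{K}_\kappa$, everything reduces to analyzing the second-coordinate map $p\mapsto\check b_p\restriction[\delta,\kappa)$ inside the generic extension $V[H]$ by $\mathbb{K}_\delta$. There the forced map sends a condition $p$ of $\mathbb{Q}$ (so $\pi(p)\in H$, whence the tree part $T_p$ and $b_p\restriction\delta$ are already determined by $H$) to the branch function $b_p\restriction[\delta,\kappa)$, which carries exactly the information in $p$ not decided by $H$.

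Order preservation is immediate: if $p\le_{\mathbb{K}_\kappa}p'$ then $\pi(p)\le_{\mathbb{K}_\delta}\pi(p')$ because $\pi$ is a projection, and the defining inequalities $\dom b_{p'}\subseteq\dom b_p$ and $b_p(\alpha)\le_{T_p}b_{p'}(\alpha)$ restrict to $[\delta,\kappa)$, so $\pi(p)$ forces $b_p\restriction[\delta,\kappa)\le b_{p'}\restriction[\delta,\kappa)$ in $\dot{\mathbb{K}}_{[\delta,\kappa)}$. For incompatibility, note first that order preservation already gives that compatible $p,p'$ have compatible images. For the converse, that incompatible $p,p'$ have incompatible images, I would argue contrapositively: starting from a common extension $(\pi(r),\dot c)$ of the two images, the relation $\pi(r)\le\pi(p),\pi(p')$ amalgamates the tree parts and the $\delta$-branch functions in $H$, while $\pi(r)$ forces $\dot c$ below both $b_p\restriction[\delta,\kappa)$ and $b_{p'}\restriction[\delta,\kappa)$; decoding $\dot c$ as in the density argument then produces a genuine common extension of $p,p'$ in $\mathbb{K}_\kappa$.

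For density, fix a condition $(s,\dot c)\in\mathbb{K}_\delta\ast\dot{\mathbb{K}}_{[\delta,\kappa)}$, with $s=\langle T_s,b_s\rangle$, $\dom b_s\subseteq\delta$, and $s\Vdash\dot c\in\dot{\mathbb{K}}_{[\delta,\kappa)}$. A condition of $\mathbb{K}_{[\delta,\kappa)}$ is a branch function with countable domain whose values lie in the generic tree, hence a countable object; since $\mathbb{K}_\delta$ is $\sigma$-closed by Lemma \ref{lem:K_sigma_closed}, it adds no such objects, so by building a decreasing $\omega$-sequence and taking its lower bound I obtain $s'\le s$ and $c_0\in V$ with $s'\Vdash\dot c=\check c_0$, arranging moreover (by extending the tree part of $s'$ if necessary) that the values of $c_0$ lie on the maximal level of $T_{s'}$. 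Then $p=\langle T_{s'},\,b_{s'}\cup c_0\rangle$ is a condition of $\mathbb{K}_\kappa$ with $\pi(p)=s'$ and $b_p\restriction[\delta,\kappa)=c_0$, whence $\iota'(\iota(p))=(s',\check c_0)\le(s,\dot c)$, as required.

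The main obstacle is the passage from iteration data back to a single $\mathbb{K}_\kappa$-condition, needed both for reflecting compatibility and for density: one must decide the name $\dot c$ for a branch function as a ground-model object and ensure the tree part is tall enough to carry its values on a common top level. This is precisely where the $\sigma$-closure of $\mathbb{K}_\delta$ from Lemma \ref{lem:K_sigma_closed} is essential, guaranteeing both that $\dot c$ can be decided and that no new branch functions appear in $V[H]$.
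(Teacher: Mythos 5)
Your proof is correct and follows essentially the same route as the paper: the heart of both arguments is the density computation, where one strengthens the first coordinate to decide the name for the branch function as a ground-model object $c_0$ and then reassembles a single $\mathbb{K}_\kappa$-condition $\langle T_{s'}, b_{s'}\cup c_0\rangle$ projecting correctly. You are in fact somewhat more careful than the paper, which leaves the order-preservation and incompatibility checks implicit and does not spell out that the $\sigma$-closure of $\mathbb{K}_\delta$ is what allows the name $\dot{c}$ to be decided.
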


\begin{proof}
    Let $r = \langle \langle  T, b\rangle, \dot{b}_\star\rangle\in \mathbb{K}_\delta \ast \dot{\mathbb{K}}_{[\delta, \kappa)}$ be some condition. We want to show that there exists a condition in $\iota(\mathbb{K}\ast \dot{\mathbb{Q}})$ whose image is below $r$. First choose a condition $ \langle  T', b' \rangle \leq \langle  T,b\rangle$ which determines $\dot{b}_\star$ i.e. $\langle  T', b'\rangle\Vdash_{\mathbb{K}_{\delta}} \dot{b}_\star = \check{b}_\star$ for some $\mathbb{K}_\delta$-name $\check{b}_\star$ of a branch function.

    Now consider $(\pi(p), \check{p})$ where $p$ is the condition $(T, b \cup b_\star)\in \mathbb{K}_\kappa$. By definition $\pi(p) =\langle T, b\rangle$ since $\dom b\subset \delta$ and $\dom \, b'\in [\delta, \kappa)$. Thus,
    \begin{equation*}
        \iota'(\pi(p), \check{p}) =\langle \pi(p), \check{b}_\star\rangle = \langle  \langle  T', b' \rangle, \check{b}_\star \rangle\leq \langle  \langle  T,b\rangle , \dot{b}_\star \rangle = r.
    \end{equation*}
\end{proof}

\begin{proof}[Back to the proof of lemma \ref{lem:K_sim_K_delta}]
    Since, $\iota, \iota'$ are both dense embeddings (and thus, also their composition) we get the equivalence

    \begin{equation*}
        \mathbb{K}_\kappa \simeq \mathbb{K}_\delta \ast \dot{\mathbb{Q}} \simeq \mathbb{K}_\delta \ast \dot{\mathbb{K}}_{[\delta,\kappa)}
    \end{equation*}
\end{proof}

\begin{theorem}
    \label{thm:every_Kurepa_not_sealed}
    It is consistent, relative to the existence  of a strongly inaccessible cardinal, that there exists a $\sigma$-distributive forcing-notion $\mathbb R$ of cardinality $\aleph_1$, such that for every Kurepa tree $T$ in the generic extension, $\mathbb{R}$ adds a branch to $T$. In particular, $T$ is not sealed.
\end{theorem}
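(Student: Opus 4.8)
The plan is to realize the model as a two-step forcing over a ground model $V \models \CH$ (indeed $\mathrm{GCH}$) carrying a strongly inaccessible cardinal $\kappa$. First I would Levy-collapse $\kappa$ to $\aleph_2$, setting $V_0 = V[\mathrm{Coll}(\omega_1,<\kappa)]$: this poset is $\sigma$-closed, so $\CH$ is preserved, $\kappa = \aleph_2^{V_0}$, and by Silver's theorem $V_0$ has no Kurepa trees. Then I would force with $\mathbb{K}_\kappa$ (Definition \ref{def:K_forcing_notion}) to add a generic Kurepa tree $T$, obtaining the final model $W = V_0[\mathbb{K}_\kappa]$; since $\mathbb{K}_\kappa$ is $\sigma$-closed and, under $\CH$, $\aleph_2$-Knaster, all cardinals and $\CH$ are preserved and $T$ has exactly $\kappa = \aleph_2$ branches. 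Finally I would let $\mathbb{R}$ be the countable-support product of $\omega_1$ copies of the branch-adder $\mathbb{B}_T$ for $T$; via Lemma \ref{lem:K_sim_K_delta} this is the same as the restriction of a tail $\dot{\mathbb{K}}_{[\delta,\kappa)}$ to $\aleph_1$-many coordinates.

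Next I would record the basic properties of $\mathbb{R}$. Its size is $\aleph_1$: a condition is a countable coherent partial branch-function into the fixed tree $T$ (which has $\aleph_1$ nodes), so there are at most $\aleph_1^{\aleph_0} = \aleph_1$ of them under $\CH$. It is $\sigma$-closed: this is the product version of Lemma \ref{lem:K_sigma_closed}, the limit of a decreasing $\omega$-chain being formed coordinatewise, with the required top nodes available because $\mathbb{R}$ is (isomorphic to) a tail of $\mathbb{K}_\kappa$ over an already-determined tree (Definition \ref{def:K_rest_delta}). Hence $\mathbb{R}$ is $\sigma$-distributive and preserves $\aleph_1$; and as $|\mathbb{R}| = \aleph_1 < \aleph_2$ it is $\aleph_2$-c.c.\ and preserves $\aleph_2$.

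The heart of the argument is that this single $\mathbb{R}$ adds a branch to an \emph{arbitrary} Kurepa tree $T'$ of $W$. Since $T'$ has size $\aleph_1$ and $\mathbb{K}_\kappa$ is $\aleph_2$-Knaster, a nice-name argument places $T' \in V_1 := V_0[\mathbb{K}_\delta]$ for some $\delta$ with $|\delta| = \aleph_1$, and I arrange $T \in V_1$ as well. Because $V_1$ extends $V_0$ by a poset of size $\aleph_1$, it still has no Kurepa trees, so $T'$ has at most $\aleph_1$ branches in $V_1$; hence all $\kappa$-many branches of $T'$ in $W$ are added by the tail product $\mathbb{K}_{[\delta,\kappa)}$, which over $V_1$ is a countable-support product of copies of $\mathbb{B}_T$ indexed by $I = [\delta,\kappa)$. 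I fix a branch $b \in [T']^W \setminus V_1$; by the Knaster nice-name bound it has a support $J \subseteq I$ with $|J| \le \aleph_1$ and $b \notin V_1[\mathbb{K}_{I\setminus J}]$. Writing $W[\mathbb{R}] = V_1[\mathbb{K}_{I \sqcup I_0}]$ with $I_0$ the fresh block of $\mathbb{R}$, I would use the coordinate-permutation automorphism of the product carrying $J$ onto $I_0$ to transport the name of $b$ to a name supported on $I_0$; its realization $b'$ is again a branch of $T'$, lies in $V_1[G\restriction I_0]$, and is not in $V_1$, so by the mutual genericity of the blocks $I$ and $I_0$ it is not in $V_1[G\restriction I] = W$. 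Thus $\mathbb{R}$ adds the branch $b'$ to $T'$, witnessing that $T'$ is not sealed.

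The step I expect to be most delicate is exactly this transport, because the new branch must be new over the \emph{final} model $W$ and not merely over an intermediate extension: any branch captured by a sub-forcing of $\mathbb{K}_\kappa$ already lies in $W$. The resolution rests on three points I would need to verify carefully. First, that $V_1$ genuinely has no Kurepa trees, so that $b \notin V_1$ is available — this is where the inaccessibility of $\kappa$ is essential (cf.\ Theorem \ref{thm:poor}). Second, that a size-$\aleph_1$ branch has a size-$\aleph_1$ support, which comes from the $\aleph_2$-Knaster property. Third, and most subtly, that $\mathbb{K}_{[\delta,\kappa)}$ behaves enough like a genuine product that disjoint coordinate blocks are mutually generic and permutations of coordinates induce automorphisms; the common-maximal-level constraint linking the coordinates of a condition must be handled here. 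The combinatorial identity $|I| = |I \setminus J| = \kappa = \kappa + \aleph_1$ is what makes the permutation sending $J$ to the fresh block $I_0$ possible, and mutual genericity then converts ``$b'$ is non-ground-model and supported off $I$'' into ``$b'$ is new over $W$''.
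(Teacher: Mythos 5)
Your overall strategy coincides with the paper's: collapse an inaccessible to $\aleph_2$, add a generic Kurepa tree by $\mathbb{K}_\kappa$, localize an arbitrary Kurepa tree $T'$ to an intermediate model via the chain condition, and then duplicate the block of coordinates responsible for a new branch, using mutual genericity of the two blocks to see that the duplicate adds a branch over the \emph{final} model. The paper phrases the duplication as $\mathbb{R}_2 \times \mathbb{R}_2 \cong \mathbb{R}_2$ together with a rearrangement of the iteration rather than as a coordinate permutation, and it bounds the branches of $T'$ in the intermediate model by directly counting nice names (getting $<\kappa$) rather than by invoking Silver; these are cosmetic differences.

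Two of your intermediate claims need repair. First, $\mathbb{R}$, computed in the final model $W$, is \emph{not} $\sigma$-closed: a decreasing $\omega$-chain in a single coordinate traces an increasing chain of nodes of the generic tree $T$ converging to some limit level $\gamma$, and a lower bound requires a node of $T_\gamma$ above that chain; but at each limit level the construction extends only the countably many branches designated by the branch functions, while $T\restriction\gamma$ has uncountably many cofinal branches, so most such chains are unbounded in $T$. Indeed, a genuinely $\sigma$-closed forcing cannot add a new cofinal branch to a ground-model $\omega_1$-tree with countable levels (the standard $2^{<\omega}$-splitting argument produces $2^{\aleph_0}$ nodes on a countable level), so if $\mathbb{R}$ were $\sigma$-closed over $W$ it could not do its job. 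What is true, and what the theorem asserts, is $\sigma$-distributivity, and this follows from your own factoring: $\mathbb{K}_\kappa \ast \dot{\mathbb{R}}$ is isomorphic to a $\sigma$-closed forcing and $\mathbb{K}_\kappa$ adds no new $\omega$-sequences, hence neither does the quotient. This is exactly how the paper treats $\mathbb{R}_2$. Second, ``$V_1$ has no Kurepa trees because it extends $V_0$ by a poset of size $\aleph_1$'' is not a fact you can simply quote; in your situation it holds because $\mathbb{K}_\delta$ adds no new countable sets of ordinals and therefore commutes past the tail of the Levy collapse, exhibiting $V_1$ as a Levy-collapse extension of a model in which $\kappa$ is still inaccessible. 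Alternatively, replace this step by the paper's nice-name count, which only needs that $T'$ has fewer than $\kappa$ branches in the intermediate model. With these two points fixed, your argument goes through.
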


\begin{proof}
    \renewcommand{\qedsymbol}{}
    Let $\kappa$ be an inaccessible cardinal. Consider the forcing notion $\mathbb{CK}_\kappa\equiv\mathbb{C}_{\omega_1, <\kappa}\times \mathbb{K_\kappa}$, where $\mathbb{C}_{\omega_1, < \kappa}$ is the Levy collapse\footnote{The Levy collapse is the collection of all partial functions $f \colon \kappa \times \omega_1 \to \kappa$ such that $\dom f$ is at most countable and for every $0 < \mu < \kappa$ and $\alpha < \omega_1$ such that $(\mu,\alpha)\in\dom f$, $f(\mu,\alpha) < \mu$. We order it by reverse inclusion.}.
\end{proof}

\begin{lemma}
    $\mathbb{CK}_\kappa$  is $\sigma$-closed.
\end{lemma}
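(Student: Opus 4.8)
The plan is to reduce to the general fact that a product of two $\sigma$-closed forcing notions is again $\sigma$-closed, together with the $\sigma$-closure of each factor. Since $\mathbb{K}_\kappa$ is $\sigma$-closed by Lemma \ref{lem:K_sigma_closed}, it remains to verify that the Levy collapse $\mathbb{C}_{\omega_1,<\kappa}$ is $\sigma$-closed and that closure passes to the product.

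First I would handle the Levy collapse directly. Given a descending sequence $\langle f_n \mid n < \omega\rangle$ in $\mathbb{C}_{\omega_1,<\kappa}$ (so, under reverse inclusion, $f_n \subseteq f_{n+1}$), set $f_\omega = \bigcup_{n<\omega} f_n$. Because each $\dom f_n$ is at most countable, $\dom f_\omega = \bigcup_{n<\omega} \dom f_n$ is a countable union of countable sets, hence at most countable; the conditions agree on overlaps, so $f_\omega$ is a well-defined partial function, and the constraint $f_\omega(\mu,\alpha) < \mu$ is inherited from the $f_n$. Thus $f_\omega \in \mathbb{C}_{\omega_1,<\kappa}$ and $f_\omega \leq f_n$ for all $n$, which gives $\sigma$-closure of the first factor.

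Then I would address the product. Let $\langle \langle  f_n, p_n\rangle \mid n < \omega\rangle$ be a descending sequence in $\mathbb{C}_{\omega_1,<\kappa} \times \mathbb{K}_\kappa$. By the definition of the product order the coordinate sequences $\langle f_n \mid n<\omega\rangle$ and $\langle p_n \mid n<\omega\rangle$ are descending in their respective factors. Using the previous paragraph choose a lower bound $f_\omega$ for the first coordinate, and using Lemma \ref{lem:K_sigma_closed} choose a lower bound $p_\omega$ for the second. Then $\langle f_\omega, p_\omega\rangle$ is a lower bound for the original sequence in the product, since the order is computed coordinatewise.

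I do not expect a genuine obstacle here: both factors are closed under unions of countable descending chains, and $\sigma$-closure is preserved by finite products precisely because lower bounds can be taken in each coordinate independently. The only point requiring (routine) care is that the countable union of the countable domains in the collapse coordinate stays countable, which is immediate in \textbf{ZFC}.
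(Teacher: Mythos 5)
Your proof is correct and follows essentially the same route as the paper: the paper likewise takes the coordinatewise lower bound, using the condition $\langle T_{\omega+1}, b_{\omega+1}\rangle$ produced in Lemma \ref{lem:K_sigma_closed} for the $\mathbb{K}_\kappa$ factor and the union $\bigcup_{n<\omega} f_n$ for the Levy collapse factor. Your write-up merely makes explicit the routine verification that the union of the collapse conditions is again a condition.
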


\begin{proof}
    Let $\langle \left( p_n,q_n \right)\in\mathbb{CK}_\kappa\mid n <\omega \rangle $ be a decreasing sequence of conditions. Consider the condition $\left(T_{\omega + 1}, b_{\omega + 1}\right)$ from lemma \ref{lem:K_sigma_closed} and the function $p_\omega = \bigcup_{n < \omega}p_n$.

    Then, $\left( p_\omega, \langle T_{\omega + 1}, b_{\omega + 1}\rangle \right)$ is a condition and $\left( p_\omega, \langle T_{\omega + 1}, b_{\omega + 1}\rangle \right) \leq (p_n,q_n)$ for all $n < \omega$.
\end{proof}

\begin{lemma}
    \label{lem:kappa_c_c}
    $\mathbb{CK}_\kappa$ is $\kappa$-c.c.
\end{lemma}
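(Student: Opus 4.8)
The plan is to show that $\mathbb{CK}_\kappa = \mathbb{C}_{\omega_1,<\kappa} \times \mathbb{K}_\kappa$ is $\kappa$-c.c.\ by a $\Delta$-system argument, using that $\kappa$ is inaccessible to guarantee that the relevant parameters of conditions can be forced to coincide. First I would take an arbitrary family $\langle (p_\xi, q_\xi) \mid \xi < \kappa\rangle$ of conditions, where $q_\xi = \langle T_\xi, b_\xi\rangle \in \mathbb{K}_\kappa$ and $p_\xi \in \mathbb{C}_{\omega_1,<\kappa}$, and seek two indices whose conditions are compatible. The key observation is that each condition is, in an appropriate sense, a countable object: $p_\xi$ has countable domain, and $q_\xi$ consists of a countable tree $T_\xi$ together with a branch function $b_\xi$ whose domain has size $<\aleph_1$, hence is countable.

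The main step is to apply a $\Delta$-system lemma to the domains. Since $\kappa$ is inaccessible (in particular regular and a strong limit), and each condition is coded by a countable subset of $\kappa$ — namely $\dom p_\xi$ together with $\dom b_\xi$ — I would first thin out the family so that these coding sets form a $\Delta$-system with root $R$. Here the strong-limit property of $\kappa$ is what makes the $\Delta$-system lemma applicable to countable sets below $\kappa$: there are at most $\kappa$ many countable subsets of any $\mu < \kappa$, so a counting argument on the roots goes through. After fixing the root, I would further thin the family so that all conditions agree on the root: the restrictions $p_\xi \restriction R$, the trees $T_\xi$ (which, being countable initial segments of $\omega_1$-trees, range over a set of size $<\kappa$ by inaccessibility), and the values $b_\xi \restriction R$ all coincide across the remaining $\kappa$ many indices. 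This uses that the number of possible ``countable patterns'' is strictly below $\kappa$.

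Having arranged agreement on the root, I would pick any two surviving indices $\xi \neq \eta$ and argue that $(p_\xi, q_\xi)$ and $(p_\eta, q_\eta)$ are compatible. For the collapse coordinate this is immediate: $p_\xi$ and $p_\eta$ agree on $\dom p_\xi \cap \dom p_\eta = R$, so $p_\xi \cup p_\eta$ is a condition. For the $\mathbb{K}_\kappa$ coordinate, since $T_\xi = T_\eta =: T$ share the same maximal level, and the branch functions $b_\xi, b_\eta$ agree on $R = \dom b_\xi \cap \dom b_\eta$, the union $\langle T, b_\xi \cup b_\eta\rangle$ is again a condition in $\mathbb{K}_\kappa$ extending both. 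Thus the two product-conditions are compatible, contradicting the assumption of an antichain of size $\kappa$.

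The hard part, and the place deserving care, will be the thinning that forces the tree parts $T_\xi$ to coincide rather than merely to be isomorphic or to cohere on the root. A priori two conditions can have different countable trees that are incompatible only because their maximal levels disagree, so agreement on $\dom b_\xi$ alone does not suffice; one genuinely needs $T_\xi = T_\eta$ as sets. The inaccessibility of $\kappa$ is exactly what lets me demand this: the collection of all possible tree-parts is a set of size $<\kappa$ (each $T_\xi$ is a countable structure on countable ordinals, and $\kappa$ is a strong limit), so by regularity of $\kappa$ some single tree $T$ is shared by $\kappa$ many of the $\xi$. Once that is secured, compatibility in the $\mathbb{K}_\kappa$ coordinate reduces to the same gluing argument used in the proof of $\sigma$-closure (Lemma \ref{lem:K_sigma_closed}), where a common extension is built by taking unions of coherent branch functions over a shared top level.
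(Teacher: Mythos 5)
Your proof is correct, but it takes a different written route from the paper's. The paper disposes of this lemma in one line: $\mathbb{CK}_\kappa$ is a product of two $\kappa$-Knaster posets (the Levy collapse, and $\mathbb{K}_\kappa$ by Jech's result quoted earlier in the section), and the Knaster property is productive, hence the product is $\kappa$-Knaster and in particular $\kappa$-c.c. Your direct $\Delta$-system argument is essentially an unwinding of those citations: you take $\kappa$ many conditions, stabilize the countable tree parts and the patterns on the root using that $\kappa$ is a strong limit, and glue. All the steps check out --- the generalized $\Delta$-system lemma applies to countable sets because $\mu^{\aleph_0}<\kappa$ for all $\mu<\kappa$; there are fewer than $\kappa$ countable trees on countable ordinals; and once $T_\xi=T_\eta$ and the branch functions and collapse conditions agree on the roots, the unions are conditions below both. (For the collapse coordinate you should note explicitly that the number of possible restrictions to the root is $\bigl(\sup_{(\mu,\alpha)\in R}\mu\bigr)^{\aleph_0}<\kappa$, which again uses regularity plus strong limit; this is the one counting step you wave at rather than perform.) What your approach buys is self-containedness and a slightly weaker hypothesis: the paper's appeal to Jech's Knaster result for $\mathbb{K}_\kappa$ is stated under $\CH$, whereas your counting only uses inaccessibility of $\kappa$; it also in fact establishes the stronger $\kappa$-Knaster property, since you find $\kappa$ many pairwise compatible conditions, not merely two. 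What the paper's approach buys is brevity and modularity, since the Knaster property of each factor is needed elsewhere anyway.
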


\begin{proof}
	Indeed, $\mathbb{CK}_{\kappa}$ is equivalent to a \emph{product} of two $\kappa$-Knaster forcing notions (see \cite[Chapter 5]{CummingsHandbook}), and thus it is $\kappa$-Knaster and in particular, $\kappa$-c.c. 
\end{proof}	

\begin{definition}
    Let $\delta <\kappa$ be some ordinal. We define the following posets
    \begin{equation*}
        \mathbb{C}_{\omega_1, \delta} = \left\{ r\in   \mathbb{C}_{\omega_1, <\kappa}\,  \Big|\, \dom r \subset \delta \times \omega_1\right\}
    \end{equation*}
    and
    \begin{equation*}
        \mathbb{C}_{[\delta, \kappa )} = \left\{ r\in \mathbb{C}_{\omega_1, <\kappa}\, \Big|\, \dom r \subset [\delta, \kappa )\times \omega_1 \right\}
    \end{equation*}
    And together with definition \ref{def:K_rest_delta} we build the posets
    \begin{equation*}
        \mathbb{CK}_\delta = \mathbb{C}_{\omega_1, \delta}\times \mathbb{K}_\delta
    \end{equation*}
    And
    \begin{equation*}
        \mathbb{CK}_{[\delta, \kappa)} = \mathbb{C}_{[\delta, \kappa )}\times \mathbb{K}_{[\delta, \kappa)}
    \end{equation*}

\end{definition}

\begin{proof}[Back to the proof of Theorem \ref{thm:every_Kurepa_not_sealed}]

    Let $\dot{T}$ be a name s.t.\ $\dot{T}$ is forced to be a Kurepa tree in the generic extension by $\mathbb{CK}_{\kappa}$. Clearly, there is such a name, as the generic tree added by $\mathbb{K}_{\kappa}$ is Kurepa.

Let
    \begin{equation*}
        \dot{T} = \left\{ \langle  r, \left( \check{\alpha}, \check{\beta} \right)\rangle\, \Big|\, r\in A_{\alpha, \beta},\, \alpha,\beta <\omega_1 \right\}
    \end{equation*}

    be a nice name for $T$ (for the definition of nice names and their properties see \cite{Kunen}). Since $A_{\alpha,\beta}$ is an anti-chain and $\mathbb{CK}_\kappa$ is $\kappa$-c.c.\ we have that $|A_{\alpha,\beta}|<\kappa$. Let
    \begin{equation*}
        \delta = \sup \left\{ \pi_1(\dom f_r)\cup\dom b_r\, \Big|\, \alpha, \beta <\omega_1,\, r\in A_{\alpha,\beta},\ r =\langle  f_r, \langle  T_r, b_r\rangle\rangle\right\}
    \end{equation*}

    where $\pi_1\colon\kappa\times\omega_1\rightarrow\kappa$ is the projection on the first coordinate. The ordinal $\delta$ is the supremum of a bounded set of ordinals below $\kappa$ and thus $\delta < \kappa$. In particular,  $\dot{T}$ is equivalent to a $\mathbb{CK}_\delta$-name.

    Since $\mathbb{CK}_\delta$ adds the new tree $\dot{T}$, we can ask how many branches does $\dot{T}^G$ have in $M[G]$, for $G\subset \mathbb{CK}_\delta$ a generic filter. An upper bound for the number of branches is $\left(2^{\left| \mathbb{CK}_\delta\right|}\right)^{\aleph_1}$, this is an upper bound because the number of anti-chains is at most $2^{\left| \mathbb{CK}_\delta\right|}$ hence the number of nice names of branches is $\left(2^{\left| \mathbb{CK}_\delta\right|}\right)^{\aleph_1}$, and since $\kappa$ is inaccessible $\left(2^{\left| \mathbb{CK}_\delta\right|}\right)^{\aleph_1} < \kappa$. $\mathbb{C}_{\omega_1, <\kappa}$ collapses all cardinals between $\omega_1$ and $\kappa$ (see \cite[Chapter 15]{Jech}), thus if no branches are added by the rest of the forcing the tree $\dot{T}$ would have $\aleph_1$ branches in the generic extension (by $\mathbb{CK}_\kappa$). Since it is assumed that
$\Vdash_{\mathbb{CK}_\kappa } \dot{T} \text{ is Kurepa},$

    if we manage to prove

    \begin{equation*}
        \mathbb{CK}_\kappa \simeq \mathbb{CK}_\delta \ast \dot{\mathbb{Q}}
    \end{equation*}

    for some name of a forcing $\dot{\mathbb{Q}}$, it would imply that $\dot{\mathbb{Q}}$ adds $\kappa$ branches to $\dot{T}$. In order to prove the above we follow the proof of lemma \ref{lem:K_sim_K_delta}, first we build a projection map $\pi\colon\mathbb{CK}_\kappa\rightarrow \mathbb{CK}_\delta$. Define for $f\in \mathbb{C}_{\omega_1, <\kappa}$ the restriction $f\restriction \delta = f\cap (\delta \times \omega_1\times \delta)$ and let

    \begin{equation*}
        \pi(p) = \langle  f_p\restriction \delta, T_p, b_p\restriction \delta \rangle , \quad p = \langle  f_p, T_p, b_p\rangle \in \mathbb{CK}_\kappa
    \end{equation*}

    We prove that $\pi$ is a projection. Let $p, q\in \mathbb{CK}_\kappa$ and assume $p\leq q$, we use the argument of lemma \ref{lem:K_sim_K_delta} and conclude that $\pi(p)\leq\pi(q)$. Let $p\in \mathbb{CK}_\kappa$, and let $q\leq \pi(p)$ and consider $ p_\star = \langle  f_q\cup f_p, T_q, b_q\cup b_p^\star \rangle$, where $b_p^\star :\kappa \rightarrow T_{q,\max}$, $\dom b_p\cap [\delta, \kappa) = \dom \, b_p^\star$ and $b_p^\star(\alpha) \geq b_p(\alpha)$. $ p_\star \leq p$ is a condition in $\mathbb{CK}_\kappa$ and $\pi(p_\star)\leq q$. Hence, $\pi$ is a projection. Let $H\subset\mathbb{CK}_\delta$ be some generic filter. Let us define $\mathbb{Q}$

    \begin{equation*}
        \mathbb{Q} = \left\{ p\in \mathbb{CK}_\kappa\, \Big|\, \pi(p) \in H \right\}.
    \end{equation*}

    Denote by $\dot{\mathbb{Q}}$ a $\mathbb{CK}_\delta$-name for $\mathbb{Q}$. As in lemma \ref{lem:K_sim_K_delta}, we have a dense embedding $\iota: \mathbb{CK}_\kappa\rightarrow\mathbb{CK}_\delta\ast \dot{\mathbb{Q}}$, defined via $\iota(p) = (\pi(p), \check{p})$. We define another embedding $\iota':\iota(\mathbb{CK}_\kappa)\rightarrow\mathbb{CK}_\delta\ast \dot{\mathbb{CK}}_{[\delta, \kappa)}$, via $\iota'(\pi(p), \check{p}) = (\pi(p), \check{f_p}\restriction [\delta, \kappa), \check{b_p}\restriction [\delta, \kappa))$. This is a dense embedding (same exact proof as lemma \ref{lem:K_sim_K_delta}). Hence

    \begin{equation*}
        \mathbb{CK}_\kappa\simeq\mathbb{CK}_\delta\ast\dot{\mathbb{CK}}_{[\delta, \kappa)}
    \end{equation*}

    Let $\dot{b}$ be a name of a new branch forced by $\dot{\mathbb{CK}}_{[\delta, \kappa)}$ to be in the model. We repeat the argument and get that $ \exists \zeta,\,  \delta < \zeta <\kappa$ such that $\dot{b}$ is equivalent to a $\mathbb{CK}_{[\delta, \zeta)}$-name. Denote by $\mathbb{R}_1 =\mathbb{CK}_\delta$, $\mathbb{R}_2 = \mathbb{CK}_{[\delta, \zeta)}$ and $\mathbb{R}_3 = \mathbb{CK}_{[\zeta, \kappa)}$ then

    \begin{equation*}
        \mathbb{CK}_\kappa \simeq \mathbb{R}_1\ast \dot{\mathbb{R}}_2 \ast \dot{\mathbb{R}}_3
    \end{equation*}

    Consider the forcing, $\mathbb{R}_1\ast \dot{\mathbb{R}}_2 \ast \dot{\mathbb{R}}_3\ast \check{\dot{\mathbb{R}}}_2$. Where $\check{\dot{\mathbb{R}}}$ is the $\mathbb{R}_1$-name for $\mathbb{R}_2$ i.e.

    \begin{equation*}
        (*)\, \Vdash_{\mathbb{R}_{1}\ast\dot{\mathbb{R}}_2\ast\dot{\mathbb{R}}_3} \forall x\in \check{\dot{\mathbb{R}}}_2, \exists \dot{\tau} = x, \exists \check{\sigma}\in \dot{\mathbb{R}}, \dot{\tau} = \check{\sigma}
    \end{equation*}

    From now on the $\check{}$ would be omitted from the second $\dot{\mathbb{R}}_2$. Since iterated forcing is associative we have that

    \begin{equation*}
        \mathbb{R}_1\ast \dot{\mathbb{R}}_2 \ast \dot{\mathbb{R}}_3\ast \dot{\mathbb{R}}_2 \simeq \mathbb{R}_1\ast \left(\dot{\mathbb{R}}_2 \ast \dot{\mathbb{R}}_3\ast \dot{\mathbb{R}}_2\right)
    \end{equation*}
\end{proof}

\begin{lemma}
    \begin{equation*}
        \mathbb{P}_1 =\mathbb{R}_1\ast \left(\dot{\mathbb{R}}_2 \ast \dot{\mathbb{R}}_3\ast \dot{\mathbb{R}}_2\right) \simeq \mathbb{R}_1\ast \left(\left(\dot{\mathbb{R}}_2 \ast \dot{\mathbb{R}}_3\right)\times \dot{\mathbb{R}}_2\right) =\mathbb{P}_2
    \end{equation*}
\end{lemma}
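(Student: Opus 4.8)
The plan is to pass to a generic extension $V[G_1]$ by $\mathbb{R}_1 = \mathbb{CK}_\delta$ and reduce the claim to the assertion that, in $V[G_1]$,
\begin{equation*}
    \mathbb{R}_2 \ast \dot{\mathbb{R}}_3 \ast \dot{\mathbb{R}}_2 \simeq \left( \mathbb{R}_2 \ast \dot{\mathbb{R}}_3\right) \times \mathbb{R}_2.
\end{equation*}
Since $\mathbb{P}_1$ and $\mathbb{P}_2$ share the outer factor $\mathbb{R}_1 \ast (-)$, and two-step iterations are forced equivalent whenever the tail names are forced equivalent, prepending $\mathbb{R}_1$ together with the associativity of iteration (already used above) will then give $\mathbb{P}_1 \simeq \mathbb{P}_2$. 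Recall from the convention fixed in equation $(*)$ that the final factor $\dot{\mathbb{R}}_2$ denotes the $\mathbb{R}_1$-version of $\mathbb{R}_2$; that is, over $V[G_1]$ it is the check-name $\check{\mathbb{R}}_2$ of a poset lying in $V[G_1]$, rather than a name depending on the generics for the intermediate factors $\mathbb{R}_2 \ast \dot{\mathbb{R}}_3$.

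The crux of the argument is to verify that the last copy of $\mathbb{R}_2$ is genuinely a poset of $V[G_1]$ that is left untouched by the intermediate forcing. Here I would use that $\mathbb{R}_2 = \mathbb{CK}_{[\delta,\zeta)} = \mathbb{C}_{[\delta,\zeta)} \times \mathbb{K}_{[\delta,\zeta)}$ and, as recorded in the remark following Definition \ref{def:K_rest_delta}, the conditions of $\mathbb{K}_{[\delta,\zeta)}$ are merely branch functions into levels of the tree $T$, because the tree part is \emph{entirely determined} already in $V[G_1]$ (the tree of height $\omega_1$ is built by the $\mathbb{K}_\delta$ component of $\mathbb{CK}_\delta$). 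Thus the underlying set and order of $\mathbb{R}_2$ depend only on $T$ and on the coordinates in $[\delta,\zeta)$, neither of which is altered by forcing with $\mathbb{R}_2 \ast \dot{\mathbb{R}}_3$: these later forcings only adjoin branches to the fixed tree $T$ and extend the collapse on other coordinate blocks. Hence $\Vdash_{\mathbb{R}_2 \ast \dot{\mathbb{R}}_3} \dot{\mathbb{R}}_2 = \check{\mathbb{R}}_2$, so the last factor is indeed a check-name over $V[G_1]$.

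With this in hand I would invoke the standard fact that an iteration whose final factor is (forced to equal) a ground-model poset is just a product: for a poset $\mathbb{A}$ and a poset $\mathbb{B}$ in the relevant ground model, the map $(a, \check{b}) \mapsto (a,b)$ is an order-preserving dense embedding of $\mathbb{A} \ast \check{\mathbb{B}}$ into $\mathbb{A} \times \mathbb{B}$, since $(a',\check{b}') \leq (a,\check{b})$ holds exactly when $a' \leq a$ and $b' \leq b$ (the latter being a statement about checked objects, decided by $\mathbbm{1}$). Applying this over $V[G_1]$ with $\mathbb{A} = \mathbb{R}_2 \ast \dot{\mathbb{R}}_3$ and $\mathbb{B} = \mathbb{R}_2$ yields the inner equivalence, and associativity finishes the proof. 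The main obstacle is precisely the second paragraph: one must be careful to argue that the poset $\mathbb{R}_2$ is an honest element of $V[G_1]$ whose definition is absolute between $V[G_1]$ and the further extension by $\mathbb{R}_2 \ast \dot{\mathbb{R}}_3$; the reduction in the first paragraph and the product identity in the last are routine.
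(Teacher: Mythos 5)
Your proposal is correct and follows essentially the same route as the paper: both arguments hinge on the convention $(*)$ that the final factor $\dot{\mathbb{R}}_2$ is the $\mathbb{R}_1$-name (hence a check-name over $V^{\mathbb{R}_1}$, untouched by the intermediate generics), and both realize the equivalence via the dense embedding identifying an iteration with a ground-model final factor with the corresponding product. The paper writes this dense embedding out directly at the level of conditions of $\mathbb{P}_2$, while you package it as the standard fact $\mathbb{A} \ast \check{\mathbb{B}} \simeq \mathbb{A} \times \mathbb{B}$ applied over $V[G_1]$; the substance is the same.
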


\begin{proof}
    Denote, by $\dot{\mathbb{R}}^\star \equiv \dot{\mathbb{R}}_2\ast \dot{\mathbb{R}}_3$. Let $(p_1, \dot{r}_1)\in \mathbb{P}_2)$, then $\exists p_2\leq p_1$ s.t.

    \begin{equation*}
        p_2\Vdash \dot{r_1} = (\dot{r}, \dot{r}_2)\in \dot{\mathbb{R}}^\star\times \dot{\mathbb{R}}_2
    \end{equation*}

    From $(*)$, $\exists p_3\leq p_2$ s.t.

    \begin{equation*}
        p_3 \Vdash \dot{r} = (\dot{q}_1, \dot{q}_2)\in \dot{\mathbb{R}}_2 \ast \dot{\mathbb{R}}_3 \wedge (\dot{q_1}, \dot{q_2}) \Vdash \dot{r}_2 = \dot{\check{\sigma}} \in \dot{\mathbb{R}}_2
    \end{equation*}

    Thus, the condition $(p_3, ((\dot{q}_1, \dot{q}_2), \dot{\check{\sigma}}))\leq (p_1, \dot{r}_1)$ and $\mathbb{R}_1\ast \dot{\mathbb{R}}_2 \ast \dot{\mathbb{R}}_3\ast \dot{\mathbb{R}}_2$ is densely embedded inside $\mathbb{R}_1\ast \left(\left(\dot{\mathbb{R}}_2 \ast \dot{\mathbb{R}}_3\right)\times \dot{\mathbb{R}}_2\right) =\mathbb{P}_2$ hence equivalent as a forcing notion.
\end{proof}
\begin{claim}\label{claim:R_2^2=R_2}
$\mathbb{R}_2 \times \mathbb{R}_2 \cong \mathbb{R}_2$. 
\end{claim}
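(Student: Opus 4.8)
The plan is to prove the stronger statement that $\mathbb{R}_2\times\mathbb{R}_2$ is forcing equivalent to $\mathbb{R}_2$ by splitting $\mathbb{R}_2=\mathbb{CK}_{[\delta,\zeta)}=\mathbb{C}_{[\delta,\zeta)}\times\mathbb{K}_{[\delta,\zeta)}$ into its two factors and treating them independently. I would work in $V[\mathbb{R}_1]=V[\mathbb{CK}_\delta]$, so that the generic Kurepa tree $\mathcal T$ built by the $\mathbb{K}_\delta$-part (the tree to which the coordinates of $\mathbb{K}_{[\delta,\zeta)}$ add branches) is available and $\mathbb{K}_{[\delta,\zeta)}$ is a genuine poset. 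Since finite products commute and associate up to isomorphism, $\mathbb{R}_2\times\mathbb{R}_2\cong\bigl(\mathbb{C}_{[\delta,\zeta)}\times\mathbb{C}_{[\delta,\zeta)}\bigr)\times\bigl(\mathbb{K}_{[\delta,\zeta)}\times\mathbb{K}_{[\delta,\zeta)}\bigr)$, so it suffices to show $\mathbb{C}_{[\delta,\zeta)}\times\mathbb{C}_{[\delta,\zeta)}\cong\mathbb{C}_{[\delta,\zeta)}$ and $\mathbb{K}_{[\delta,\zeta)}\times\mathbb{K}_{[\delta,\zeta)}\cong\mathbb{K}_{[\delta,\zeta)}$.

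The collapse factor is routine. A condition in $\mathbb{C}_{[\delta,\zeta)}\times\mathbb{C}_{[\delta,\zeta)}$ is a pair $(f^0,f^1)$ of countable partial functions on $[\delta,\zeta)\times\omega_1$ satisfying the Levy constraint $f^i(\mu,\alpha)<\mu$, which I identify with a single countable partial function $g$ on $[\delta,\zeta)\times\omega_1\times 2$ obeying $g(\mu,\alpha,i)<\mu$. Because the constraint depends only on the first coordinate $\mu$, I can assemble a bijection $\omega_1\times 2\to\omega_1$ for each fixed $\mu$ into one bijection $[\delta,\zeta)\times\omega_1\times 2\to[\delta,\zeta)\times\omega_1$ that preserves the $\mu$-coordinate; the induced reindexing is a genuine order isomorphism onto $\mathbb{C}_{[\delta,\zeta)}$.

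The branch factor is the crux and the step I expect to be the main obstacle. Here a condition is a countable partial branch function into a single level of $\mathcal T$, so a product condition $(b^0,b^1)$ carries two a priori distinct heights and cannot be merged into a single-level condition by a literal bijection; thus I expect only a dense embedding (forcing equivalence) rather than a strict isomorphism. To get density I would first observe that $\mathcal T$ is \emph{normal}, i.e.\ every node extends to every higher countable level: given a node $t$ and a target level $\eta$, the conditions in which $t$ has been prolonged to level $\eta$ are dense, since one may keep extending the tree part upward while designating a fresh branch-function coordinate below $\delta$ to follow $t$, which forces $t$'s branch to acquire a top node at limit stages exactly as in the proof of Lemma \ref{lem:K_sigma_closed}. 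Normality makes the set of product conditions of common height dense in $\mathbb{K}_{[\delta,\zeta)}\times\mathbb{K}_{[\delta,\zeta)}$. On that dense set, the map $b\mapsto(b\restriction_0,b\restriction_1)$ sending a single-level condition on the index set $[\delta,\zeta)\times 2$ to its two restrictions is an order-preserving dense embedding of $\mathbb{K}_{[\delta,\zeta)\times 2}$ into the product; incompatibility is preserved in both directions because two branch functions are incompatible exactly when they are $\mathcal T$-incomparable at some common index, and such an index lies in one of the two copies.

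Finally, since $|[\delta,\zeta)\times 2|=|[\delta,\zeta)|=|\zeta|$, any bijection of these index sets induces a literal isomorphism $\mathbb{K}_{[\delta,\zeta)\times 2}\cong\mathbb{K}_{[\delta,\zeta)}$, the poset depending on its index set only as a bare set once $\mathcal T$ is fixed; hence $\mathbb{K}_{[\delta,\zeta)}\times\mathbb{K}_{[\delta,\zeta)}\cong\mathbb{K}_{[\delta,\zeta)}$. Combining this with the collapse factor and using that products of dense embeddings are dense embeddings, I conclude $\mathbb{R}_2\times\mathbb{R}_2\cong\bigl(\mathbb{C}_{[\delta,\zeta)}\bigr)^2\times\bigl(\mathbb{K}_{[\delta,\zeta)}\bigr)^2\cong\mathbb{C}_{[\delta,\zeta)}\times\mathbb{K}_{[\delta,\zeta)}=\mathbb{R}_2$, as required.
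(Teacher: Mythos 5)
Your proof is correct and takes essentially the same route as the paper, which likewise factors $\mathbb{R}_2$ into its collapse and branch components and obtains each isomorphism by reindexing coordinates. The only difference is that you spell out the common-height subtlety in the branch factor (so that one really gets a dense embedding onto the common-level pairs rather than a literal isomorphism of the full product), a point the paper's one-line proof leaves implicit.
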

\begin{proof}
Recall that $\mathbb{R}_2 = \mathbb{C}_{\omega_1, [\delta, \zeta)} \ast \mathbb{K}_{ [\delta, \zeta)}$. So, it is enough to show that $\mathbb{C}_{\omega_1, [\delta, \zeta)} \times \mathbb{C}_{\omega_1, [\delta, \zeta)} \cong \mathbb{C}_{\omega_1, [\delta, \zeta)}$ and that in the generic extension, $\mathbb{K}_{ [\delta, \zeta)} \times \mathbb{K}_{ [\delta, \zeta)} \cong \mathbb{K}_{ [\delta, \zeta)}$. 

Both isomorphisms hold by reordering the coordinates in the collapse part and in the branches.
\end{proof}
\begin{proof}[Back to the proof of Theorem \ref{thm:every_Kurepa_not_sealed}]
    Let $G\subset \mathbb{R}_1$ be a generic filter, then in $V[G]$ the realization $\left(\left(\dot{\mathbb{R}}_2^G \ast \dot{\mathbb{R}}_3^G\right)\times \dot{\mathbb{R}}_2^G\right)$ is a product. Let $H_1\subset\left(\dot{\mathbb{R}}_2^G \ast \dot{\mathbb{R}}_3^G\right)$ and $H_2\subset \dot{\mathbb{R}}_2^G$ be generic filters, then $H_1, H_2$ are mutually generic thus $\dot{b}^{G,H_1}\neq \dot{b}^{G,H_1,H_2}$ and since $\dot{b}$ is a $\mathbb{R}_2$-name of a new branch and $V[G][H_1]$ is the model after forcing with $\mathbb{CK}_\kappa$ we have that $\dot{\mathbb{R}}_2^{G}$ adds a new branch to $T$. The cardinality of $\dot{\mathbb{R}}_2^G$ is at most $\zeta^{(\zeta\cdot\aleph_1)}\cdot\zeta^{\aleph_2}$ and since $\zeta <\kappa$ and $\kappa$ is strongly inaccessible $\zeta^{\zeta\cdot\aleph_1}\cdot\zeta^{\aleph_2} < \kappa$ therefore in $V[G][H_1]$ the cardinality is $\left|\dot{\mathbb{R}}_2^G\right| = \aleph_1$.
\end{proof}  

Next, we would like to show a variant of Theorem \ref{thm:every_Kurepa_not_sealed}, in which $\diamondsuit^+$ holds, but no Kurepa tree is sealed. 

\begin{theorem}
Assume that there is exists an inaccessible cardinal. Then, there is a generic extension in which $\diamondsuit^+$ holds and for every Kurepa tree $T$ there is a $\sigma$-distributive forcing notion of cardinality $\aleph_1$ that adds a branch to $T$.
\end{theorem}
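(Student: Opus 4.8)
The goal is to reproduce the conclusion of Theorem~\ref{thm:every_Kurepa_not_sealed}, but in a model where $\diamondsuit^+$ additionally holds. The natural strategy is to keep the same two-stage architecture, replacing the ``generic Kurepa tree'' part with a forcing that generically adds a $\diamondsuit^+$-sequence, since $\diamondsuit^+$ already implies the existence of a Kurepa tree and is more robust under the factorization arguments. First I would let $\kappa$ be inaccessible and consider a product $\mathbb{CD}_\kappa \equiv \mathbb{C}_{\omega_1, <\kappa} \times \mathbb{D}_\kappa$, where $\mathbb{D}_\kappa$ is the standard $\sigma$-closed forcing adding a $\diamondsuit^+$-sequence by countable approximations (conditions are sequences $\langle \mathcal{A}_\alpha \mid \alpha < \beta\rangle$ for some countable $\beta$, with each $\mathcal{A}_\alpha$ a countable family of subsets of $\alpha$, ordered by end-extension). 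This forcing is $\sigma$-closed and, under $\CH$, $\kappa$-Knaster, so by the same argument as in Lemma~\ref{lem:kappa_c_c} the product $\mathbb{CD}_\kappa$ is $\sigma$-closed and $\kappa$-c.c., hence preserves $\omega_1$ and all cardinals $\geq \kappa$ while collapsing everything strictly between $\omega_1$ and $\kappa$ down to $\aleph_2 = \kappa$.

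The key point is that $\diamondsuit^+$ is a $\Sigma$-type statement whose witness lives on $\omega_1$, so once the generic $\diamondsuit^+$-sequence is added it is added by an initial, small piece of the forcing and is preserved by everything afterwards. Concretely, I would argue that the generic $\diamondsuit^+$-sequence is determined by the first $\aleph_1$ coordinates and that the remaining forcing is $\sigma$-closed (hence adds no new subsets of $\omega_1$ and cannot destroy clubs witnessing guesses), so $\diamondsuit^+$ holds in the final extension. The heart of the matter, exactly as before, is the non-sealing claim: given an arbitrary Kurepa tree $T$ in the final model $V[G]$, I would take a nice name $\dot T$, use $\kappa$-c.c.\ to bound its antichains below some $\delta < \kappa$, and factor $\mathbb{CD}_\kappa \simeq \mathbb{CD}_\delta \ast \dot{\mathbb{CD}}_{[\delta,\kappa)}$ by the same projection-and-dense-embedding computation as in Lemma~\ref{lem:K_sim_K_delta} and Theorem~\ref{thm:every_Kurepa_not_sealed}. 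Then a new branch name $\dot b$ is captured on a further interval $[\delta,\zeta)$, I duplicate that block using associativity and the idempotence isomorphism $\mathbb{R}_2 \times \mathbb{R}_2 \cong \mathbb{R}_2$ (Claim~\ref{claim:R_2^2=R_2}, which transfers verbatim once I check $\mathbb{D}_{[\delta,\zeta)} \times \mathbb{D}_{[\delta,\zeta)} \cong \mathbb{D}_{[\delta,\zeta)}$ by reindexing coordinates), and mutual genericity of the two copies forces the two realizations of $\dot b$ to differ, exhibiting a new branch. The resulting branch-adding forcing $\mathbb{R}_2$ has cardinality $\aleph_1$ in $V[G]$ by the same inaccessibility computation.

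The main obstacle I anticipate is verifying that $\diamondsuit^+$ genuinely \emph{survives} to the final model, rather than merely being forced by the initial segment. The subtlety is that a $\diamondsuit^+$-sequence must guess \emph{every} $A \subseteq \omega_1$ of the final model along a club, so I cannot simply read it off an initial stage; I must check that the tail forcing adds no new subsets of $\omega_1$ and no new clubs that could escape the guessing families. This follows from $\sigma$-closure of the tail, but requires care because the factorization presents the tail as a two-step iteration $\dot{\mathbb{R}}_2 \ast \dot{\mathbb{R}}_3 \ast \dot{\mathbb{R}}_2$ over the ground model of the $\diamondsuit^+$-generic, and I must confirm each factor is $\sigma$-closed in the relevant intermediate model so that $\omega_1$-sequences—and in particular the guessing witnesses—are not disturbed. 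Once this preservation is secured, the rest of the argument is a direct adaptation of the proof of Theorem~\ref{thm:every_Kurepa_not_sealed}, with $\mathbb{D}$ in place of $\mathbb{K}$.
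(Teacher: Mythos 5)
There is a genuine gap, and it sits exactly at the point you flag as "the main obstacle": your architecture cannot deliver $\diamondsuit^+$ in the final model. First, the poset you propose for $\mathbb{D}$ --- plain countable initial segments of the sequence $\langle \mathcal{A}_\alpha\rangle$, ordered by end-extension --- is not the standard forcing for $\diamondsuit^+$ and is not known (and not argued by you) to force it: a density argument can arrange that a given $A$ is guessed at unboundedly many $\alpha$, but $\diamondsuit^+$ demands a \emph{club} $C$ with $A\cap\alpha$ \emph{and} $C\cap\alpha$ in $\mathcal{A}_\alpha$ for all $\alpha\in C$, and producing such a $C$ requires actively shooting clubs. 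The paper therefore takes $\mathbb{D}_\Phi$ to be a countable-support iteration in which $\mathbb{B}_0$ adds the candidate sequence and the later iterands are club-shooting posets guided by a bookkeeping function $\Phi$ over names for subsets of $\omega_1$ (following \cite{CummingsForemanMagidor}). Second, and independently, you arrange the collapse and the $\diamondsuit^+$-forcing as a \emph{product} $\mathbb{C}_{\omega_1,<\kappa}\times\mathbb{D}$ and then assert that $\diamondsuit^+$ survives because "the remaining forcing is $\sigma$-closed (hence adds no new subsets of $\omega_1$)." This confuses $\sigma$-closure (no new $\omega$-sequences) with $\omega_1$-distributivity: the Levy collapse making $\kappa=\aleph_2$ adds $\kappa$ many new subsets of $\omega_1$, and by mutual genericity the $\diamondsuit^+$-generic from the product factor has no reason to guess any of them. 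This is precisely why the paper uses an iteration $\mathbb{C}_{\omega_1,<\kappa}\ast\mathbb{D}_\Phi$ with $\Phi$ enumerating $\mathbb{C}\ast\mathbb{D}$-names, so that every subset of $\omega_1$ of the final model is eventually handled by some club-shooting iterand.

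A downstream consequence you partly notice but then wave away: because the $\diamondsuit^+$-part is an iteration, the tail beyond $\delta$ does \emph{not} decompose as a product, so Claim \ref{claim:R_2^2=R_2} does not "transfer verbatim by reindexing coordinates." The paper replaces it with a genuinely different statement: one constructs a new bookkeeping function $\Phi'$ of length $\omega_2 + (\delta'-\delta)$ such that $\mathbb{C}_{\omega_1,<\kappa}\ast\mathbb{D}_{\Phi'}\cong\mathbb{C}_{\omega_1,<\kappa}\ast\mathbb{D}_{\Phi}\ast\mathbb{R}_2$, i.e.\ one shows that appending an extra copy of the branch-adding block can be absorbed into a single iteration of the same kind, rather than invoking idempotence of a product factor. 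Your overall skeleton (bound the name of $T$ below $\delta<\kappa$ via $\kappa$-c.c., capture a branch name on $[\delta,\zeta)$, duplicate that block, use mutual genericity, and compute $|\mathbb{R}_2|=\aleph_1$) does match the paper's, but the two points above --- the club-shooting iteration with bookkeeping over names, and the iteration-based replacement for the idempotence claim --- are the actual content of this theorem beyond Theorem \ref{thm:every_Kurepa_not_sealed}, and they are missing.
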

\begin{proof}
The construction is extremely similar to the one in the proof of Theorem \ref{thm:every_Kurepa_not_sealed}, so let us focus of the differences. 

Assume $\mathrm{GCH}$. Let $\Phi \colon \eta \to H(\omega_2)$.\footnote{We eventually would like to take $\Phi$ to be a bookkeeping function, i.e.\ $\eta=\omega_2$ and $\Phi$ is a surjection in which every value appears unboundedly. We need to treat a slightly more general case first.}  

Let $\mathbb{D}_{\Phi}$ be the standard forcing for adding $\diamondsuit^+$, assuming that $\Phi$ is surjective. Namely, $\mathbb{D}$ is a countable support iteration of length $\eta$, $\langle \dot{\mathbb{B}}_\alpha, \mathbb{D}_\beta \mid \alpha < \beta \leq \eta\rangle$ such that $q \in \mathbb{B}_0$ if and only if $q$ is a function with domain $\rho < \omega_1$ and $q(\zeta) \subseteq \mathcal{P}(\zeta)$ and countable for every $\zeta < \rho$. We order the forcing by end extension. Let $\langle A_\alpha \mid \alpha < \omega_1\rangle$ be the union of conditions in the generic filter for $\mathbb{B}_0$. 

Let us define $\dot{\mathbb{B}}_\alpha$ to be the trivial forcing unless $x = \Phi(\alpha)$ is a $\mathbb{D}_\gamma$-name for a subset of $\omega_1$, and in that case $\dot{\mathbb B}_\alpha$ consists of closed bounded sets $a$ such that $\forall \rho \in a,\, a \cap \rho,\, x \cap \rho \in A_\rho$.  

In \cite[Section 12]{CummingsForemanMagidor}, this forcing is shown to have a dense $\sigma$-closed subset, and it forces $\diamondsuit^+$. In particular, by a theorem of Jensen, there is a Kurepa tree in the generic extension.

Let us define $\mathbb{CD}_\kappa$ to be $\mathbb{C}_{\omega_1, <\kappa} \ast \mathbb{D}_{\Phi}$ for some bookkeeping function $\Phi$. 
Let us remark that the main properties of the forcing, except for forcing the strong diamond, are true for \emph{any} function $\Phi$. 
\begin{remark}
Assume $\CH$. For every ordinal $\eta$ and function $\Phi \colon \eta \to H(\omega_2)$, $\mathbb{D}_{\Phi}$ is $\omega_2$-c.c.\ and contains a $\sigma$-closed dense subset.
\end{remark}
So if $G$ is generic for $\mathbb{CD}_\kappa$, $\aleph_1^{V[G]} = \aleph_1^V$ and $\aleph_2^{V[G]} = \kappa$.

Let $T$ be a Kurepa tree in $V[G]$. By the same chain condition arguments, there is an initial segment of the collapse and an initial segment of the iteration of $\mathbb{D}$. Note that in order for this to be true, we must be careful, as the bookkeeping function $\Phi$ can be evaluated only in the full generic extension by $\mathbb{C}_{\omega_1,<\kappa}$. 

We argue that by the chain condition of the Levy collapse, for every initial segment of the iteration $\mathbb{D}$ there is an initial segment of the Levy collapse realizing it. 

So, by the previous argument, there is a forcing notion $\dot{\mathbb{Q}}$ which is a tail of both the Levy collapse and the diamond iteration adding a branch to $T$. The next steps of the proof are slightly more involved, as the final segment of the forcing $\mathbb{D}$ does not decompose into a product as in the $\mathbb{K}_\kappa$ case, but rather to an iteration. In particular we need to replace Claim \ref{claim:R_2^2=R_2}, by something slightly different.
 
Let us decompose our forcing $\mathbb{CD}_{\kappa} = \mathbb{R}_1 \ast \mathbb{R}_2 \ast \mathbb{R}_3$ as in the previous proof. So, $\mathbb{R}_1$ is the component that adds the tree $T$, $\mathbb{R}_2$ is adding a new branch and $\mathbb{R}_3$ is the rest of the forcing. Without loss of generality, $\mathbb{R}_1$ is of the form $\mathbb{C}_{\omega_1, <\delta} \ast \mathbb{D}_\delta$ for $\delta > \omega_1$. $\mathbb{R}_2$ is of the form $\mathbb{C}_{\omega_1, [\delta,\delta')} \ast \mathbb{D}_{[\delta, \delta')}$ and $\mathbb{R}_3 = \mathbb{C}_{\omega_1, [\delta',\kappa)} \ast \mathbb{D}_{[\delta', \kappa)}$.
Clearly, in the generic extension, $|\mathbb{R}_2| = \aleph_1$. 
 
The following claim serves as a replacement for Claim \ref{claim:R_2^2=R_2}.
\begin{claim}
There is a name for a function $\Phi' \colon \omega_2 + \delta' - \delta \to H(\omega_2)$ such that 
\[\mathbb{C}_{\omega_1, <\kappa} \ast \mathbb{D}_{\Phi'} \cong \mathbb{C}_{\omega_1,<\kappa} \ast \mathbb{D}_{\Phi} \ast \mathbb{R}_2\]
\end{claim}
\begin{proof}
Let us decompose $\mathbb{R}_2$ into $\mathbb{C}_{\omega_1, [\delta,\delta')} \ast \mathbb{D}_{[\delta, \delta')}$. Recall that 
\[\mathbb{C}_{\omega_1, <\kappa} \times \mathbb{C}_{\omega_1, [\delta,\delta')} \cong \mathbb{C}_{\omega_1, <\kappa}.\]

Let $G_1 \times G_2 \subseteq \mathbb{C}_{\omega_1, <\kappa} \times \mathbb{C}_{\omega_1, [\delta,\delta')}$ be $V$-generic (which is equivalent to a $\mathbb{C}_{\omega_1,<\kappa)}$-generic). 

Let us define $\Phi'$ in $V[G_1][G_2]$ as follows. $\Phi'(\alpha)^{G_1 \times G_2} = \Phi(\alpha)^{G_1}$ for $\alpha < \omega_2$. For $\alpha \in [\omega_2, \omega_2 + (\delta' - \delta))$, we define $\Phi'(\alpha) = \Phi(\alpha - \omega_2)^{G_1 \restriction \delta \times G_2}$. 

So, $\Phi'$ is a function from $\kappa$ to $H(\omega_2)^{V[G_1][G_2]}$, 
and the iteration that it induces is equivalent to 
$\mathbb{D}_{\Phi}^{G_1} \ast \mathbb{D}_{[\delta, \delta')}^{G_1 \restriction \delta \times G_2}$.
\end{proof}

Since $\mathbb{D}_{\Phi'}$ contains a $\sigma$-closed dense subset, the whole forcing does not add countable sequences of ordinals and therefore $\mathbb{R}_2$ is forced to be $\sigma$-distributive, as wanted.
\end{proof}
\section{The Sealing Poset}\label{section:sealing-trees-forcing}
In this section we prove the first part of the main theorem of this work. Here we present the forcing notion $\mathbb{S}$ (called the sealing poset) and prove that $\mathbb{S}$ is proper for $\mathcal{S}\cup \mathcal{T}$. From this property we conclude that it preserves $\omega_1$ and $\omega_2$. 

The proof for properness relies heavily on reflection between a model and its elementary submodel. Before we delve into the details of our case, let us demonstrate this method on a simpler case. We start by providing a proof of a theorem by Baumgartner. The proof itself is almost equivalent to Baumgartner's original proof (using ultrafilters), but we are replacing the role of the ultrafilter by an elementary substructure and emphasize the hidden compactness argument.\footnote{We would like to thank Martin Goldstern for pointing our attention to this connection.} 
\begin{definition}
    Let $T$ be an Aronszajn tree, we define the specializing forcing notion
    \begin{equation*}
        \mathbb{A} = \left\{ f:T\rightarrow \omega\ \Big|\ |f|<\aleph_0 \wedge \forall x,y\in \dom f, x \parallel y\rightarrow x = y\vee f(x)\neq f(y) \right\}
    \end{equation*}
\end{definition}

\begin{theorem}[Baumgartner]
    Let $T$ be an Aronszajn tree then $\mathbb{A}$ adds a specializing function of $T$ without collapsing $\aleph_1$.
\end{theorem}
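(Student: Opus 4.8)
The plan is to prove the two assertions separately: that the generic object is a specializing function, which is a routine density argument, and that $\mathbb{A}$ preserves $\aleph_1$, which is where the reflection/compactness idea enters and which I would establish by proving that $\mathbb{A}$ is proper. For the first assertion, note that for each $t\in T$ the set $D_t=\{f\in\mathbb A : t\in\dom f\}$ is dense: given $f$ with $t\notin\dom f$, only finitely many nodes of $\dom f$ are comparable to $t$, so since $\omega$ is infinite we may assign $t$ a value distinct from all of theirs, keeping the condition specializing. Hence the generic $F=\bigcup G$ is a total function $T\to\omega$, and the defining clause of $\mathbb{A}$ guarantees that $F$ is injective on every chain, i.e.\ a specializing function.

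For preservation of $\aleph_1$ I would show that $\mathbb{A}$ is proper for every countable $M\prec H(\lambda)$ with $T,\mathbb{A}\in M$, which by the remarks following the definition of properness yields that $\omega_1$ is not collapsed. Fix such an $M$, write $\delta=M\cap\omega_1$, and let $p\in\mathbb{A}\cap M$. I would prove the strong statement that $p$ itself is $M$-generic. So let $D\in M$ be dense open and $q\le p$; extending $q$ inside $D$ we may assume $q\in D$. By Lemma \ref{lem:T_height_in_M}, $T\cap M=T\restriction\delta$, so $\dom q$ splits as $a\sqcup b$, where $a=\dom q\cap M\subseteq T\restriction\delta$ is the \emph{low} part and $b=\{s_1,\dots,s_k\}$ consists of \emph{high} nodes, all of level $\ge\delta$. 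Write $v_j=q(s_j)$ and $B_j=\{t\in T: t<_T s_j\}$ for the branch below $s_j$. Any $d\in D\cap M$ has $\dom d\subseteq T\restriction\delta$, so if moreover $d$ extends $q\restriction a$ then $d\cup q$ is automatically a function, and its only possible failure of being specializing is a pair $x\in\dom d$, $s_j\in b$ with $x<_T s_j$ and $d(x)=v_j$. Thus the task reduces to producing $d\in D\cap M$ extending $q\restriction a$ that \emph{avoids the value $v_j$ along the branch $B_j$} for each $j$.

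The heart of the matter --- Baumgartner's hidden compactness --- is the following reflection. Since $T$ is Aronszajn, each $B_j$ is a chain of order type at least $\delta$, hence $B_j\notin M$ (otherwise $M$ would contain the ordinal $\mathrm{otp}(B_j)\ge\delta$, impossible as $M\cap\omega_1=\delta$). Thus the forbidden set $\bigcup_j B_j$ is a finite union of branches, none of which lies in $M$. I would then reflect into $M$ the existence of a \emph{copy} of the finite configuration $\langle (s_j,v_j)\rangle_{j\le k}$: working only with data visible in $M$ --- the function $q\restriction a$, the isomorphism type (comparabilities and assigned values) of $\langle s_j\rangle$, and the set $D$ --- elementarity yields configurations $\langle (t_j,v_j)\rangle\in M$ realizing that same type with $(q\restriction a)\cup\{(t_j,v_j)\}\in D$. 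The remaining and decisive point is to choose the copy with all $t_j$ lying \emph{off} every $B_j$, so that each $t_j$ is incomparable to each $s_{j'}$ and no collision can arise. Here one exploits that every level of $T$ is countably infinite (as recorded in the proof of Lemma \ref{lem:T_height_in_M}): a finite union of branches leaves room at each level for infinitely many pairwise-incomparable copies, and a chain can meet the nodes of at most one such copy, so all but finitely many copies avoid $\bigcup_j B_j$ entirely. Selecting a surviving copy produces $d\in D\cap M$ with $d\cup q$ specializing, i.e.\ $d$ is compatible with $q$, establishing that $p$ is $M$-generic.

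The main obstacle is precisely this final selection step: guaranteeing that a reflected copy both avoids the finitely many forbidden branches $B_j$ and still lies in $D\cap M$. This is the content that Baumgartner handles with an ultrafilter, and it is where the Aronszajn hypothesis is indispensable: if $T$ had an uncountable branch then a total specializing function could not exist at all, so $\mathbb{A}$ would be forced to collapse $\omega_1$ and no such reflection could go through. I would therefore expect the bulk of the write-up to be the careful bookkeeping that turns ``there are infinitely many pairwise-incomparable copies in $D$ extending $q\restriction a$'' into a statement with parameters in $M$ that can be reflected, together with the verification that a copy disjoint from the $B_j$ is genuinely compatible with $q$.
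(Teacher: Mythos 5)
Your overall strategy --- properness for countable $M \prec H(\lambda)$, splitting $\dom q$ into the part below $\delta = M\cap\omega_1$ and the finitely many high nodes, and reducing to finding $d \in D\cap M$ extending $q\restriction a$ that avoids the values $v_j$ along the branches $B_j$ --- is exactly the skeleton of the paper's argument. But the decisive step is missing, and the justification you offer for it is not correct. You claim that since each level of $T$ is countable, ``a finite union of branches leaves room at each level for infinitely many pairwise-incomparable copies, so all but finitely many copies avoid $\bigcup_j B_j$.'' The copies, however, cannot be taken from $T$ at large: they must come from the set $Y$, definable in $M$, of configurations $\langle t_j\rangle$ of the right comparability-and-value type with $(q\restriction a)\cup\{(t_j,v_j)\}\in D$. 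Nothing about the countability of levels guarantees that $Y$ contains even two pairwise-incomparable configurations; a priori all members of $Y\cap M$ could, say, have their $i$-th coordinate on the chain below $s_j$ with the colliding value, in which case every candidate fails. Ruling this out is precisely where the Aronszajn hypothesis must be applied \emph{to the definable family} $Y$: if the witnessing coordinates of a suitable (unbounded, since it contains the configuration $\bar s \notin M$) subfamily are pairwise comparable, they generate an uncountable branch of $T$ lying in $M$, a contradiction. Your own closing paragraph concedes that this selection step is ``the main obstacle'' and ``the bulk of the write-up,'' so the proposal as it stands does not contain the proof.

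For comparison, the paper closes this gap as follows: it fixes the family $\mathfrak D$ of same-size conditions below $d^M$ with a coherent enumeration, colors each member by a \emph{reason for incompatibility} with $d$ (a pair $(i,j)$ with $r_j \le_T d_i$ and $r(r_j) = d(d_i)$), and uses the compactness theorem for propositional logic to transfer the finite colorings, which are available inside $M$, to a total coloring of $\mathfrak D$ that $M$ can see. A color class containing $d$ whose $j$-th coordinates are pairwise comparable and include a node outside $M$ would define an uncountable branch in $M$, contradicting Aronszajn; otherwise one passes to a definable subfamily on which the pair $(i,j)$ can no longer witness incompatibility, and iterates through the finitely many colors. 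That coloring-plus-compactness mechanism (or an equivalent $\Delta$-system/ultrafilter argument) is the content your proposal still needs.
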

\begin{proof}[Proof. (Folklore)]
    In his proof, Baumgartner showed that $\mathbb{A}$ is c.c.c.\ which implies that $\omega_1$ is preserved. To achieve the same we prove that $\mathbb{A}$ is proper for every countable elementary submodel (which also implies $\omega_1$ is preserved). The proof itself can be adjusted to obtain the stronger result.

    The cardinality of $\mathbb{A}$ is $\aleph_1$. 
    Let $M\prec H((2^{\aleph_1})^ +)$ be a countable elementary submodel. 
    Assume $\mathbb{A}$ is not proper for $M$ then $\exists  p\in \mathbb{A}$ such that 
    $\forall q\leq p$, $q$ is not $M$-generic. 
    Thus, $p$ itself is not $M$-generic then $\exists D\in M, D\subset\mathbb{A}$ dense and 
    $\exists p' \leq p$ such that $\forall r\in D\cap M,\, r\perp p'$. 
    Since $D$ is dense $\exists d\in D$ such that $d\leq p'$ thus $\forall r\in D\cap M$ we have $r\perp d$. 
    
    Consider the set
    \begin{equation*}
        \mathfrak{D} = \left\{ r\in D\, \Big|\, |d| = |r| \wedge r\leq d^M\right\},
    \end{equation*}
    where $d^M = d\cap M$. Since $d^M\in M$ (as $d^M$ is finite), $\mathfrak{D}$ is definable in $M$ therefore $\mathfrak{D}\in M$. 
    
	Note that $d\notin M$, and thus the set $\mathfrak{D}$ is \emph{large} from the prespective of $M$. The following definition is the exact formulation of this fact.
\begin{definition}
    Let $X\subset T$, we say $X$ is \underline{unbounded} if 
    \[\sup\left\{ \level(x)\ \Big|\ x\in X \right\} = \omega_1.\]
\end{definition}

\begin{lemma}
    \label{lem:X_is_unbounded}
    Let $X\in M,\ X\subset T$ and assume $\exists x\in X\backslash M$. Then $X$ is unbounded.
\end{lemma}

\begin{proof}
    Let $X$ be such a set, we choose some element in $X\backslash M$ and denote it by $y$. Assume that $X$ is bounded in $T$. Let $\alpha = \sup \left\{ \level(x)\ \Big|\ x\in X \right\}$, since $X$ is bounded $\alpha < \omega_1$. Since $M\prec H(\omega_3)$ and $X\in M$ then $M$ also thinks that $X$ is bounded. Let $\sup \left\{ \level(x)\ \Big|\ x\in X \right\} =\beta <\omega_1^M$.  Since $y\notin M$ and $y\in X$ then $\exists x\in X$ s.t.\ $\level(x) \geq\omega_1^M > \beta$. Thus

    \begin{equation*}
        H\left( \omega_2 \right) \models \exists x\in X, \level(x) > \beta
    \end{equation*}

    And from elementarity
    \begin{equation*}
        M \models \exists x\in X, \level(x) > \beta
    \end{equation*}
    which is a contradiction.
\end{proof}

    Let us enumerate the domains of the conditions in $\mathfrak{D}$ in a way 
    that the enumeration of $d^M$ is constant across all conditions in $\mathfrak{D}$. 
    Then, the reason for the incompatibility of a condition $r\in D\cap M$ and $d$ must be 
    $\exists i,j <|d|$ such that $r_j\leq d_i$ and $r(r_j) = d(d_i)$.

    We show that there exists a \emph{coloring} $P_{ij}$ of $\mathfrak{D}$ for $i,j < |d|$ with the following property

    \begin{equation*}
        (*)\quad r^1, r^2\in P_{ij} \rightarrow r^1_j \parallel r^2_j \wedge r^1(r^1_j) = r^2(r^2_j) = d(d_i).
    \end{equation*}
    It will be convenient for us to allow an element to be colored with more than one color.  
    Let $X\subset\mathfrak{D}\cap M$ be a \textbf{finite} set and consider the set

    \begin{equation*}
        P_{ij}^X = \left\{ r\in X\, \Big|\, r_j\leq d_i \wedge r(r_j) = d(d_i)\right\}.
    \end{equation*}

    Since every element in $X$ is incompatible with the condition $d$, it has to be in one of the $P_{ij}^X$. In other words, $X = \bigcup_{ij < |d|} P_{ij}^X$.

    While the sets $P_{ij}^X$ are uniformly definable in $H(\omega_3)$, they are not uniformly definable in $M$. Nevertheless, they are finite and thus belong to $M$. 


	In particular,
    \begin{equation*}
        M \models \exists P_{ij}^X,\, \left(X = \bigcup_{ij < |d|} P_{ij}^X \right) \wedge \left(P_{ij}^X\text{ has property } (*)\right)
    \end{equation*}
	As this is true for every finite $X \subseteq \mathcal{D}$ in $M$,
    \begin{equation*}
        M \models \forall X \subseteq \mathcal{D},\,\text{finite } \exists P_{ij}^X,\, \left(X = \bigcup_{ij < |d|} P_{ij}^X \right) \wedge \left(P_{ij}^X\text{ has property } (*)\right)
    \end{equation*}
	Let us now apply the compactness theorem for propositional logic. Let adds an atomic proposition $c_{r, i, j}$ for each $r \in \mathcal{D}$ and $i, j < |d|$. We look at the collection of propositions encoding $(*)$, which are $\mathcal{T} = \mathcal{T}_0 \cup \mathcal{T}_1$, 
	\[\begin{matrix}
	\mathcal{T}_0 & = & \{\neg (c_{r,i,j} \wedge c_{r', i, j}) \mid & r, r'\in \mathcal{D},\, i, j < |d|, \\ 
	& & &  r(j) \perp r'(j) \vee \neg (r(j) = r'(j) = d(i))\} \\
	\mathcal{T}_1 & = & \{\bigvee_{i,j<|d|} c_{r, i, j} \mid r \in \mathcal{D}\} 
	\end{matrix}\] 




	By the remarks above
    \begin{equation*}
        M\models \mathfrak{T} \text{ is finitely satisfiable.}
    \end{equation*}



	Therefore,
    \begin{equation*}
        M\models P_{ij} \text{ is a coloring of } \mathfrak{D}\text{ with property } (*).
    \end{equation*}

    Hence, by elementarity

    \begin{equation*}
        H(\omega_3)\models P_{ij} \text{ is a coloring of } \mathfrak{D}\text{ with property } (*)
    \end{equation*}

    Hence, $\exists i,j < |d|$ such that $d\in P_{ij}$. We show that $d\in P_{ij}$ either leads to a contradiction or to the existence of a subset $X$ of $P_{ij}$ such that $d\in X$ and all elements of $X$ there $j$'th element is compatible with the $i$'th of $d$. First consider the case when $d_j\notin M$, in this case, we have that  $P_{ij}$ is unbounded and then since $\forall r^1,r^2\in P_{ij},\, r^1_j\parallel r^2_j$, $P_{ij}$ defines the branch

    \begin{equation*}
        B = \bigcup\left\{ seg(r_j)\, \Big|\, r\in P_{ij} \right\}
    \end{equation*}

    This is a branch of $T$ in $M$, but from elementary $M$ also thinks that $T$ is Aronszajn, contradiction. We remain with the case $d_j\in M$, consider the set

    \begin{equation*}
        X = \left\{ r\in P_{ij}\, \Big|\, r_j = d_j \wedge r(r_j) = d(d_j)\right\}
    \end{equation*}

    Let $r\in X$ be some condition, then since $r_j = d_j\wedge r(r_j) = d(d_j)$ we have that $r_j$ can't interfere with $d_i$. In particular, for every $r \in P_{i,j}$, the reason for its incompatibility with $d$ is never that the $j$-th member of $r$ lie below the $i$-th member of $d$ (in fact, they are equal). 
    
    Let us repeat the argument, without the color $(i,j)$, on the set $P_{i,j}$. We again obtain a coloring with $|d|^2 - 1$ many colors, and can either get a cofinal branch, or a small unbounded set of conditions for which certain $(i',j')$ can never witness incompatibility.  
    
	After a finitely many applications of this argument we would get a set (in $M$) with $d$ in it such that all the conditions in it from $M$ are compatible with $d$. Since $M$ would think that this set is not empty we get a condition in $M$ compatible to $d$, contradicting our hypothesis.
\end{proof}

We now begin the main proof of this section.

\begin{definition}
    \label{def:forcing_poset}
    Let $T$ be a Kurepa tree, define the \underline{sealing poset of $T$}, $\mathbb{S}$: \hfill

    $p\in\mathbb{S}$ iff $p =\langle  f_p, \vec{M}_p\rangle$ where $f_p : T\cup \left[T\right]\rightarrow\omega$ and $\vec{M}_p \in \mathbb{M}$ is a finite sequence of elementary submodels of two types. We also require that our conditions have the following properties:
    \begin{enumerate}
        \item {\bf Weak Monotonicity.} $\forall x\leq_T y\in \dom f_p,\ f_p\left( x \right)\leq f_p \left( y \right)\wedge \forall x,b\in \dom f_p,b\in \left[T\right]\wedge x\in b\rightarrow f_p\left( x \right)\leq f_p\left( b \right)$.
        \item \label{condition:2} {\bf Closure under branches.} $\forall N\in \vec{M}_p\forall b\in N\left( \exists t\in \left(\dom f_p\backslash N\right) \cap b \right) \rightarrow \left( b\in\dom f_p\wedge f_p\left( b \right) = f_p\left( t \right) \right)$.
        \item {\bf Closure under meets.} $\forall x_1, x_2 \in \dom f_p,\ x_1 \wedge x_2\in \dom f_p$.
        \item {\bf Inclusion of the root.} $\emptyset_{root}\in \dom f_p\wedge f_p \left( \emptyset_{root} \right) = 0$.
    \end{enumerate}

    $\mathbb{S}$ is ordered by reverse inclusion.
\end{definition}

\begin{remark}
    We want our forcing to add a generic function $f:T\cup[T]\rightarrow\omega$ without collapsing $\omega_1$ or $\omega_2$. Now in $M[G]$ if a new generic branch $b$ is added via some other forcing we want $f\restriction b$  to be the cause of $\omega_1$ collapse.

    Since we require monotonicity of $f$ we expect that $f\restriction b$ would then be unbounded.

    Condition \ref{condition:2} is needed to ensure properness. We need $N$ to know the bound on the value of a branch $b\in N$ that has a bound (from monotonicity) outside of $N$.

    Condition 3 is needed in case there are two branches which are determined by condition \ref{condition:2} have a common ancestor which determines them both. In this case we want to make sure both branches are compatible.

    Condition 4 sets the root to 0, which removes a redundant degree of freedom for our function.
\end{remark}

The following lemma is the technical heart of the main theorem of this paper.
\begin{lemma}
    \label{lemma:P_is_proper}
    $\mathbb{S}$ is proper for $\mathcal{S}$. In particular $\mathbb{S}$ does not collapse $\omega_1$.
\end{lemma}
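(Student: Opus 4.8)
The plan is to show that $\mathbb{S}$ is proper for the stationary collection $\mathcal{S}$ of countable elementary submodels by mimicking the structure of the Baumgartner properness argument presented above, but now carrying along the side-condition sequence $\vec{M}_p$. Fix $M \in \mathcal{S}$ and a condition $p \in M \cap \mathbb{S}$. By Corollary \ref{cor:p_cup_M}, the condition $q_0 = \langle f_p, \vec{M}_q\rangle$, where $\vec{M}_q$ is the closure of $\vec{M}_p \cup \{M \cap H(\omega_2)\}$ under intersections, is a legitimate condition extending $p$ with $M$ (i.e.\ its residue) appearing in the side-condition part. The bulk of the work is to verify that $q_0$ is $M$-generic, i.e.\ that for every dense $D \in M$ and every $r \leq q_0$ there is $d \in D \cap M$ with $d \parallel r$. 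I would assume toward a contradiction that some $r \leq q_0$ is incompatible with every element of $D \cap M$, and then run a reflection/compactness argument against $M$ to manufacture the desired compatible condition inside $M$.

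Next I would isolate the combinatorial obstruction to compatibility exactly as in the Aronszajn case. Two conditions fail to amalgamate only because the monotone-function parts conflict: for some nodes, forcing weak monotonicity together with closure under meets and closure under branches would violate the $\omega$-valued constraint. So I would pass to the set $\mathfrak{D}$ of conditions in $D \cap M$ whose function part agrees with $r$ on the ``$M$-part'' $r \cap M$ and has the same finite size, and argue via Lemma \ref{lem:X_is_unbounded} that $\mathfrak{D}$ is unbounded from $M$'s perspective whenever the relevant node of $r$ lies outside $M$. I would then define a finite coloring $P_{ij}$ of $\mathfrak{D}$ recording, for each pair of indices, \emph{which} interaction (a node of the candidate condition lying below a node of $r$ with a forbidden value, or a branch in $M$ being pinned by $r$ through closure under branches) causes incompatibility, and use the propositional compactness theorem inside $M$ exactly as in the folklore proof to reflect the existence of such a coloring with the key property down to $M$ and back up to $H(\omega_2)$.

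The step I expect to be the main obstacle is handling the interaction between the function part and the side conditions simultaneously --- in particular ensuring that when one reflects, the amalgamated condition genuinely satisfies \emph{all four} clauses of Definition \ref{def:forcing_poset}, not just weak monotonicity. The new difficulty over Baumgartner is that closure under branches (clause \ref{condition:2}) means a node of $r$ outside $M$ can force a \emph{branch} $b \in M$ into the domain with a prescribed value, so a color class $P_{ij}$ may correspond to a cofinal branch rather than a single conflicting pair. Here I would use the Kurepa structure: if the color class $P_{ij}$ is unbounded and witnesses a single branch, that branch is an object of $M$, and I would argue that the value constraint it imposes is already respected by some $d \in M$ (since $M$ knows the bound on $f_p(b)$ by closure under branches), so it cannot obstruct compatibility; and if the relevant node of $r$ lies inside $M$ (the case $d_j \in M$ above), the conflict is trivially avoidable and I peel off that color and iterate. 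After finitely many such reductions --- exactly as in the last paragraph of the Baumgartner proof --- all surviving color classes are harmless, so $M$ must believe $\mathfrak{D}$ contains a condition compatible with $r$, yielding $d \in D \cap M$ with $d \parallel r$ and the desired contradiction. Finally, since $\mathcal{S}$ is stationary in $P_{\omega_1}(H(\omega_2))$, properness for $\mathcal{S}$ gives preservation of $\omega_1$, as recorded in the preliminaries.
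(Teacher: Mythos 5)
Your overall architecture matches the paper's: insert $M$ into the side-condition sequence via Corollary \ref{cor:p_cup_M}, pass to a definable set $\mathfrak{D}$ of conditions below $d^M$ of the same shape as $d$, colour $\mathfrak{D}$ according to the reason for incompatibility with $d$, reflect the colouring into $M$ by propositional compactness, and peel off colours one at a time. But the two hardest pieces of content are missing rather than merely deferred. First, your taxonomy of obstructions is too coarse. You use only colours $P_{ij}$ indexed by pairs of nodes, together with the observation that a colour class may trace out a branch lying in $M$. The genuinely new phenomenon relative to Baumgartner, however, is that $d$ may carry side-condition models $N \in \vec{M}_d$ with $N \notin M$ and $N \cap \omega_1 < M \cap \omega_1$. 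Clause \ref{condition:2} of Definition \ref{def:forcing_poset} then forces any amalgam to assign a single value to all elements of the candidate's domain lying on a branch $b \in N$ above level $N \cap \omega_1$ (and to agree with $f_d(b)$ when $b \in \dom f_d$); a condition in $M$ cannot see $N$ and may freely violate this. This produces two further families of colours (the paper's $P_{Nk_0k_1}$ and $P_{Nkb_\star}$ in Lemma \ref{lem:reasons_for_incompatibility}), whose indexing objects are not members of $M$, so one must additionally check that the data actually used in their definitions ($N \cap \omega_1$ and $f_d(b_\star)$) do lie in $M$ for the reflection to be legitimate. Your single-indexed colouring does not cover these cases.

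Second, you correctly flag ``ensuring the amalgamated condition satisfies all four clauses'' as the main obstacle, but you give no argument for it, and it is not routine. The colouring strategy only produces a contradiction if one knows that a condition $r \in \mathfrak{D}\cap M$ exhibiting none of the listed conflicts is genuinely compatible with $d$. Establishing that requires closing $\dom f_r \cup \dom f_d$ under meets, under projections to the levels $N \cap \omega_1$ of every model in the combined sequence, and under adding branches, assigning values to all new elements, and proving both that this process terminates after finitely many steps and that it preserves weak monotonicity and clause \ref{condition:2}. The paper needs a preliminary density lemma (restricting attention to conditions whose domains are already closed under the relevant projections) and a dedicated amalgamation lemma with an induction on the order in which new elements are introduced. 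Without some version of this, the core implication ``no conflict of the listed types $\Rightarrow$ compatible'' is unsupported, and the peeling argument does not yield the desired $d' \in D \cap M$ compatible with $d$.
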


\begin{proof}[Proof of Lemma \ref{lemma:P_is_proper}]
    To prove $\mathbb{S}$ is proper we need to show that for club many $M\in \mathcal{S}$, $\forall p\in \mathbb{S}\cap M$ there exists an extension $q\leq p$ which is an $M$-generic condition.
    Let $\lambda = \left(2^{|\mathbb{S}|}\right)^{+}$. Let us pick $M\in \mathcal{S}$ such that $\exists M'\prec H(\lambda)\wedge M'\cap H(\omega_2) = M$. Let us assume, towards a contradiction, that $\exists p \in M$ such that $\forall q \leq p$, $q$ is not $M$-generic. 
    
    Since $p\in M$, the sequence of models is also in $M$. From corollary \ref{cor:p_cup_M} we have a sequence of models $\vec{M}_r\leq \vec{M}_p$ which is the closure of $\vec{M}_p\cup \left\{ M \right\}$ under intersections. Consider $\tilde{r} = \langle  f_p,\vec{M}_r\rangle$. LEt us prove that $\tilde{r}$ is a condition stronger than $p$. Thus, it has no extension to an $M$-generic condition. 

\begin{claim}
    $\tilde{r}$ is a condition and $\tilde{r} \leq p$.
\end{claim}

\begin{proof}
    In order to show that $\tilde{r}\in \mathbb{S}$ all we need to show is that condition (\ref{condition:2}) from definition \ref{def:forcing_poset} holds, since we only modified the model sequence. Let $N\in \vec{M}_{\tilde{r}}$, by Corollary \ref{cor:p_cup_M}, $N$ is an intersection of models $N = \bigcap_{i < m}N_i$ where $N_i\in \vec{M}_p\cup\left\{ M \right\}$. Let $b\in N$ and $x\in b\cap\dom f_p\backslash N$ then $\exists i < m$ such that $x\notin N_i$ and $b \in N_i$, thus from condition (\ref{condition:2}) $b$ would have already been in $\dom f_p$ satisfying $f_p\left( b \right) = f_p\left( x \right)$ ($N_i \neq M$ since $\dom f_p \subset M$).
\end{proof}

    In particular $\tilde{r}$ is not $M$-generic. Thus, there is $q \leq \tilde{r}$ and $D\in M$ dense such that $\forall r \in M\cap D, r\perp q$. Since $D$ is dense $\exists d\in D$, $d\leq \tilde{r}$. This condition $d$ also satisfies $\forall r\in M\cap D,\ r\perp d$, and in particular $d\notin M$. 

\begin{lemma}\label{lemma:closure-under-branches-meets-and-projections}
Let $E$ be the set of all conditions $r = \langle f_r, \vec{N}_r\rangle \in \mathbb{S}$ such that for every $x \in \dom f_r$ and $N \in \vec{M}_r$ if $\Lev(x) > (N \cap \omega_1)$ then the unique element in the tree $T$ in level $N \cap \omega_1$ below $x$, $\bar{x}$, belongs to $\dom f_r$. 

Then $E$ is dense. 
\end{lemma}
\begin{proof}
First, let us fix terminology. For an element $x \in T \cup [T]$, and an ordinal $\alpha < \Lev(x)$ (which is $\omega_1$ if $x\in [T]$), the \emph{projection} of $x$ to level $\alpha$ is the unique $\bar{x}$ such that $\Lev(\bar{x}) = \alpha$ and $\bar{x} \leq_T x$ (or $\bar{x} \in x$, where $x\in [T]$). We denote this $\bar{x}$ by $\proj_\alpha(x)$.

Let $p$ be an arbitrary condition. We must show that we can find an extension of $f_p$ to a weakly monotonic function $f_r$ such that its domain is closed under meets, projections to levels of the form $N \cap\omega_1$ for $N\in \vec{M}_r$, and closed under adding branches, as in the definition of the conditions of $\mathbb S$. 
We will do that in a finite sequence of extensions.



Let $B = \dom f_p$.
We define recursively a sequence of elements of $[T] \cup T$, $t_0, t_1, \dots$, as follows.  

Given $\{t_0, \dots, t_{n-1}\} \subseteq T \cup [T]$, let us look at the collection $C_n$ of all $t\in [T] \cup T \setminus B \cup \{t_0,\dots, t_{n-1}\}$ such that either:
\begin{itemize}
\item there are $s_0, s_1 \in B \cup \{t_0,\dots, t_{n-1}\}$ such that $t = s_0 \wedge s_1$, or
\item there is $N \in \vec{M}_p$ and $w \in B \cup \{t_0,\dots, t_{n-1}\}$ such that $t = \proj_{N \cap \omega_1}(w)$, or
\item there is $N \in \vec{M}_p$ and $t \in [T] \cap N$ and $w \in B \cup \{t_0,\dots, t_{n-1}\}$ , such that $w \in t \setminus N$. 
\end{itemize}
We define an ordinal $\alpha$ to be a \emph{level of occurrence} of an element $t\in C_n$ if $\alpha = \Lev(t)$ in the first two cases and $\alpha = N \cap \omega_1$ in the last one. We pre-order the elements of $C_n$ first by their minimal level of occurrence, then by the reason that they appeared (closure under meets before projections, and projections before adding cofinal branches). Note that an element in $C_n$ might have several reasons for being in $C_n$, and in the case that it is a branch, perhaps even many levels of occurrence. In those cases, we associate to this element the minimal level of occurrence and the minimal reason for appearance. 

Finally, we take $t_n \in C_n$ to be an element with the minimal level of occurrence and among those elements, a minimal reason of appearance. Let $\gamma_n$ be the level of occurrence of $t_n$.

We want to show that this process terminates after finitely many steps. 
\begin{claim}
Let $n_*$ be the least natural number such that $C_n = \emptyset$, if there is one, and $\omega$ otherwise.
\begin{enumerate}
\item The sequence $\langle \gamma_n \mid n < n_*\rangle$ is weakly increasing. 
\item Let $\gamma$ be an ordinal such that $\exists n\, \gamma = \gamma_n$ and let $k_0$ be the least such number. Then, $|\{n < n_* \mid \gamma_n = \gamma\}|$ is bounded by $\binom{|B| + k_0}{2} + (|B| + k_0) \cdot (1 + |\{N \in \vec{M}_p \mid N \cap \omega_1 = \gamma\}|)$ and in particular, finite. 
\item Let us assume that $\gamma_n = N \cap \omega_1$, $N\in \vec{M}_p$ and let $\delta > \gamma$ be the least ordinal of the form $N' \cap \omega_1$, for $N' \in \vec{M}_p$. Let us assume, moreover, that $n$ is maximal such that $\gamma_n = N \cap \omega_1$. Then, for $\ell = \binom{|B| + n}{2}$, if $n + \ell + 1 < n_*$, then $\gamma_{n + \ell + 1} \geq \delta$.  
\end{enumerate}
Therefore, $n_* < \omega$.
\end{claim}
\begin{proof}
\begin{enumerate}
\item Let us show first that $\langle \gamma_n \mid n < \omega\rangle$ is weakly increasing. Let us look at $t_n$ and let us assume, towards a contradiction, that $\gamma_n <\gamma_{n-1}$. Let us go over all possible reasons for $t_n$ to appear:
\begin{itemize}
\item There are $s_0, s_1 \in B \cup \{t_0,\dots, t_{n-1}\}$ such that $t_n = s_0 \wedge s_1$. Clearly, if $s_0\neq t_{n-1}$ and $s_1 \neq t_{n-1}$, we would have picked them earlier, so without loss of generality, $s_1 = t_{n-1}$. Let us check where $t_{n-1}$ was originated: 
\begin{itemize} 
\item $t_{n-1} = s_2\wedge s_3$. Then $t_n = s_0 \wedge s_2 \wedge s_3$. By Lemma \ref{lem:wedge_properties}, \ref{wedge_property_idenpotency}, $t_n \in \{s_0 \wedge s_2, s_0 \wedge s_3\}$ and thus should have appeared before $t_{n-1}$.
\item $t_{n-1} = \proj_{\gamma_{n-1}}(w)$. Then $t_n = s_0 \wedge w$ (using the assumption that $\gamma_n < \gamma_{n-1}$) and again should have appeared before $t_{n-1}$.
\item $t_{n-1}$ is a branch from $N \cap [T]$ that goes through $w \in t_{n-1} \setminus N$. In this case, $\gamma_{n-1} = N \cap \omega_1 \leq \Lev(w)$. Since $\Lev(t_n) < \gamma_{n-1}$, we obtain that $t_n = s_0 \wedge w$.  
\end{itemize}
\item There is $w\in B \cup \{t_0,\dots, t_{n-1}\}$ such that $t_{n} = \proj_{N\cap \omega_1}(w)$. Again, if $w \neq t_{n-1}$, there is nothing to show. So, $w = t_{n-1}$ and we again split into cases based on where $t_{n-1}$ came from:
\begin{itemize}
\item $t_{n-1} = s_0 \wedge s_1$. In this case, $t_n = \proj_{N \cap \omega_1}(s_0) = \proj_{N \cap \omega_1}(s_1)$.
\item $t_{n-1} = \proj_{N'\cap \omega_1}(s)$. Then $t_n = \proj_{N\cap \omega_1}(s)$.
\item $t_{n-1}$ is a branch from $N \cap [T]$ that goes through $w \in t_{n-1} \setminus N$. Then $\gamma_n < \gamma_{n-1} = N \cap \omega_1 \leq \Lev(w)$ and thus $\proj_{N\cap \omega_1}(w) = t_{n-1}$. 
\end{itemize} 
\item $t_n$ is a new branch in $N \cap [T]$ that goes through $w \in t_n \setminus N$. Again, clearly $w = t_{n-1}$. We would like to show that $\Lev(t_{n-1}) = \gamma_n$. Note that $\proj_{N \cap \omega_1}(t_{n-1})$ must appear before $t_n$, so either it is a member of $B \cup \{t_0,\dots,t_{n-2}\}$ and thus $t_n$ appears before $t_{n-1}$, or that it is simply $t_{n-1}$, as wanted.
\end{itemize}
This concludes the proof for weak monotonicity of the ordinals $\gamma_n$. The proof shows a bit more --- it shows that for a fixed value of $\gamma_n$, the reasons of appearance weakly increase: we first take all the meets, then all the projections and lastly add the branches. In particular, after adding all the branches we only obtain new meets which are of level strictly above $\gamma_n$.

\item We are now ready to show the bound on number of steps with a fixed $\gamma_n$. The number of possible meets is bounded by the number of pairs that have in the beginning. Each projection is obtained from a single element (and not a meet), and finally, for every one of the projections and every one of the models of height $\gamma_n$ we can add a single branch.\footnote{If a meet adds a branch, then there is a projection that adds the same branch.}

\item In order to bound the number of steps between height of models, note that they must be obtained by meets of pair of elements, as projections and adding branches are done only at level of occurrence which is of the form $N \cap \omega_1$, $N \in \vec{M}_p$. 
\end{enumerate}

Finally, this implies that the whole process is finite, as there are finitely many steps between the heights of models and there are finitely many models in the model sequence.  

\end{proof}
\begin{remark}
The bound for the number of steps that we obtained is extremely loose --- it is super-exponential in the size of $\vec M_p$. By a finer analysis we can show that it is at most linear in the product of $|B|$ and $|\vec{M}_p|$.
\end{remark}

Let us now proceed to the definition of $f_r$. The domain of $f_r$ is going to be $B \cup \{t_i \mid i < n_*\}$. Let $f_r \restriction B = f_p$ and by recursion on $i < n_*$, let us define:

\begin{itemize}
\item If $t_i \notin [T]$, then \[f_r(t_i) = \min\{f_r(s) \mid s\in B \cup \{t_0,\dots, t_{i-1}\},\, t_i \leq_T s\},\]
\item and if $t_i \in [T]$, then \[f_r(t_i) = \max\{f_r(s) \mid s \in t_i\}.\]

\end{itemize} 
We need to show that $f_r$ satisfies the requirements of the definition of the forcing. Let $\tilde{B} = B \cup \{t_0,\dots, t_{n_* - 1}\}$. Let $f_r^i = f_r \restriction B \cup \{t_0, \dots, t_{i - 1}\}$. We will show by induction on $i$ that $f_r^i$ satisfies all requirements, except for closure under branches as in Requirement \ref{condition:2}.

\begin{enumerate}
\item \emph{Weak Monotonicity}. For $i = 0$, it follows from the fact that $p$ is a condition. For successor $i$, if $f_r^i$ would fail to be weakly monotonic, then there is $s\in \dom f_r^{i-1}$ such that monotonicity fails between $f_r(t_{i-1})$ and $f_r(s)$. This clearly cannot happen if $t_i\in [T]$, so let us assume that $t_i\notin [T]$. 

If $t_{i-1} \leq s$ then $f_r^{i-1}(s)$ is one of the elements in the set defining the value of $f_r(t_{i-1})$. If $s \leq_T t_{i-1}$ then there is some $w\in \dom f_r^{i-1}$ such that $t_{i-1} \leq_T w$ and $f_r^{i}(t_{i-1}) = f_r^{i-1}(w)$. Thus, weak monotonicity already fails for $f_r^{i-1}$ for the pair $s,w$.
\item \emph{Closure under branches.} Clearly $\tilde{B}$ satisfies the assertion that if $w \in \tilde{B}$ and there is $b \in N \cap [T]$ such that $N \in \vec{M}_p$, $w\in b \setminus N$ then $b\in \tilde{B}$. So, we show by induction that in this case, $f_r(b) = f_r(w)$. 

First, let us note that without loss of generality $w = \proj_{N\cap\omega_1}(b)$. Indeed, by the order in which elements of $\tilde{B}$ are introduced, unless $b$ is already in $N$ it is introduced only after we introduce its projection to $N' \cap \omega_1$ where $N'$ is a model of minimal value of $N'\cap\omega_1$ that contains $b$ (as in any case in which $b$ can be introduced, we can introduce its projection and the projection is prioritized). If $w \in B$ then clearly $b\in B$ and there is nothing to show. Otherwise, $w$ is a new element of $b$ and the least such element of level $\geq N \cap \omega_1$ which is added is $\proj_{N\cap \omega_1}(b)$. 

{\bf Case 0:} Let us assume first that $t_i = b$ and $w \in B \cup \{t_0,\dots, t_{i-1}\}$. By the order that we introduce the new elements of $\tilde{B}$, $b \in N'$ for some $N' \in \vec{M}_p$ such that $N' \cap \omega_1 < N \cap \omega_1 = \Lev(w)$. 
Let us verify that in this case $f_r(w) = f_r(b)$. In other words, we must verify that there are no larger elements in $b$ which are enumerated before step $i$. By the order of introduction of elements to $\tilde{B}$, this is immediate. 

{\bf Case 1:} $b \in B \cup \{t_0,\dots, t_{i-1}\}$ and $w=t_i$.
Indeed, $f_r(w)$ is defined to be $\min \{f_r(u) \mid t \in B \cup \{t_0,\dots, t_{i-1}\},\, w \leq_T u\}$, so $f_r(w) \leq f_r(b)$ always holds. The elements that appear in this list are either members of $B$ or new branches $c$ that appeared before $w$. Let $c = t_j\in [T]$, $j < i$ such that $w\in c$ and $f_r(c) < f_r(b)$. By the order of inclusion of elements into $\tilde{B}$, $c \in N''$ for some $N'' \cap \omega_1 < N \cap \omega_1$. 

Let $w_0=\proj_{N'' \cap \omega_1}(c) \in b$, $w_1=\proj_{N'\cap \omega_1}(b) \in c$. Then $w_0, w_1 \leq_T w$, and both are introduced before step $i$. Now, we derive a contradiction: if $c$ is introduced before $b$, then the pair $(c, w_1)$ is a smaller counterexample then $(b, w)$. If $b$ is introduced before $c$ then $(b, w_0)$ is a smaller counterexample.      
\item \emph{Closure under Meets and Root} This is trivial.
\end{enumerate}
\end{proof}

	Since $E$ is dense, we may assume, without loss of generality, that $D \subseteq E$ (by picking the elements of $E$ that lie below elements from $D$).
	
    Denote $d^M =\langle  f_d\restriction M,\ \vec{M}_d\cap M\rangle$.

By Lemma \ref{lem:why_d_M_is_a_condition}, if $M\in \vec{M}_d$ then $\vec{M}_d\cap M = res_M\left( \vec{M}_d \right)$, making $d^M$ a condition in $\mathbb{S}$ as well. 

Assume that $\left| f_d \right|= n$ and let $\bar{\mu}=\{\mu_0,\dots, \mu_{k-1}\} = \{N \cap \omega_1 \mid N \in \vec{M}_d\} \cap M$. Consider the set of conditions

\begin{equation*}
    \mathfrak{D} = \left\{ r\in D\ \Big|\ r\leq d^M\wedge \left| f_r \right|= n,\, \{N \cap \omega_1 \mid N \in \vec{M}_r\} = \bar{\mu} \right\}
\end{equation*}

Notice that $\mathfrak{D}$ is definable in $H(\omega_3)$ using parameters from $M$ hence $\mathfrak{D}\in M$. Moreover, as $d\in\mathfrak{D}$, $\mathfrak{D}\neq \emptyset$. 
\footnote{The assumption that $M\in \vec{M}_d$ is important, since then $d^M$ is a condition (Lemma \ref{lem:why_d_M_is_a_condition}) and $d\in \mathfrak{D}$.}

Let us first verify that $f_d \notin M$. Indeed, otherwise any condition $r\in \mathfrak{D} \cap M$ is compatible with $d$. Let $r = \langle f_r, \vec{M}_r\rangle$, $d = \langle f_d, \vec{M}_d\rangle$. In this case $f_r = f_d$, as $f_{d^M} = f_d$ and the size of $f_r$ does not allow it to have any additional elements. Let $\vec{M}$ be the closure under intersections of $\vec{M}_r \cup \vec{M}_d$. In particular, every new element in $\vec{M}$ can be represented as the intersection of an element from $\vec{M}_d$ and an element of $\vec{M}_r$. In particular, if a cofinal branch $b$ appears in such a countable model then it appears in some model $N \in \vec{M}_r$ and in some other model $N' \in \vec{M}_d$. Clearly, not both of those models are transitive, so without loss of generality $N \cap \omega_1 \leq N' \cap \omega_1$ and thus $(N \cap N') \cap \omega_1 = N \cap \omega_1$. If there is some $w\in \dom f_d$ such that $w\in b \setminus (N\cap N')$ then already $b \in \dom f_d$, using the closure under branches of $r$ itself.   

\begin{corollary}
    The set $\bigcup\left\{\dom f_r\cap T \ \Big|\ r\in\mathfrak{D}  \right\}$ is unbounded in $T$.
\end{corollary}


    Since $\forall r\in M\cap D,\ r\perp d$ the same holds for all conditions in $\mathfrak{D}\cap M$. Since $M\in \vec{M}_d$ and $\forall r\in \mathfrak{D}\cap M,\ r\leq d^M$, it follows $\vec{M}_r \leq res_M\left( \vec{M}_d \right)$ hence by lemma \ref{lem:q_leq_res} $\vec{M}_r \parallel \vec{M}_d$.

    Thus, every condition in $\mathfrak{D}$ has a compatible model sequence part to the model sequence of $d$, this means that the incompatibility cannot come from the model sequence.\footnote{As explained above, the new elements which are originated as intersections of elements the model sequences cannot contradict the requirement of closure under branches.}

    So, what can cause this incompatibility? The following lemma enumerates the possible causes for incompatibility.
\begin{lemma}
    \label{lem:reasons_for_incompatibility}

    Let $p\in \mathfrak{D}$

    \begin{enumerate}
        \item \label{incompatibility:1} Incompatibility of monotonicity:
              \begin{equation*}
                  \exists x \in \dom f_p\cap T,\ \exists y\in \dom f_d\cap T,\ x < _T y\wedge f_p\left( x \right) > f_d\left( y \right)
              \end{equation*}
              or
              \begin{equation*}
                  \exists x \in \dom f_p\cap T,\ \exists b\in \dom  f_d \cap \left[ T \right],\ x \in b \wedge f_p\left( x \right) > f_d\left( b \right)
              \end{equation*}
        \item \label{incompatibility:2}$\exists N \in \vec{M}_d$ such that $N\notin M$, $\exists b\in N$, $\exists x_1, x_2\in b\cap\dom f_p\backslash N$ such that $x_1 < x_2$ and $f_p\left( x_1 \right) < f_p\left( x_2 \right)$.\footnote{Here the incompatibility comes from a model $N$ unknown to $M$, such that $N\cap\omega_1 < M\cap\omega_1$, and $\dom f_d$ has a branch from $N$, this branch limits the possible values of $x_1,x_2$ (condition (\ref{condition:2})), but being unknown to $M$, $f_p$ sets $f_p\left( x_1 \right) < f_p\left( x_2 \right)$.}

        \item \label{incompatibility:3}$\exists N\in \vec{M}_d$ such that $N\notin M$, $\exists b_\star \in N\cap\dom f_d\cap\left[ T \right]$, $\exists x\in b_\star\cap\dom f_p\backslash N$ such that $f_p\left( x \right) < f_d\left( b_\star \right)$.\footnote{Here the incompatibility comes from a model $N$ unknown to $M$, such that $N\cap\omega_1 < M\cap\omega_1$ and $\dom f_d$ has a branch from $N$. This branch sets the value of some $x_1\in \dom f_p$ but being unknown to this $f_p$ it set its value to be smaller the $f_d\left( b_\star \right)$.} 
    \end{enumerate}

We say that a condition $p$ has \emph{incompatibility of type \ref{incompatibility:1}} if requirement \ref{incompatibility:1} holds, and similarly for requirements \ref{incompatibility:2} or \ref{incompatibility:3}.  
\end{lemma}
\begin{proof}
First, let us rule out some other possible incompatibilities.
\begin{itemize}
\item  Notice that switching the direction of the inequality in \ref{incompatibility:1}, (i.e., with the roles of $p,d$ reversed), always leads to compatibility. Indeed, for a pair of elements $x \in \dom f_p$, $y\in \dom f_d$, if $x, y \in T$ and $y\notin \dom f_p$ then necessarily $x <_T y$. Moreover, if $b\in \dom f_p$ then $b\in M$ and if $x\in \dom f_d\backslash M$ from condition (\ref{condition:2}), $b\in \dom f_d$ hence in $\dom f_{d^M}$ and since $f_p \leq f_{d^M}$ it follows that $f_d\left( x \right) = f_d\left( b \right) = f_p\left( b \right)$.
\item In \ref{incompatibility:2}, we do not need to worry about $N \in M$, as for those models will appear in the model sequence of $d^M$. 
\item In \ref{incompatibility:3}, the case where the '$<$' changes to '$>$' is covered in (\ref{incompatibility:1}).
\end{itemize}

This lemma is non-trivial, as a condition $q$ must satisfy closure under meets and under adding branches. So we show that if all three possibilities fail, then there is a condition $q$ stronger than both $p$ and $d$. 

Indeed, we know that the model sequences can be combined by adding the required intersections. Let $\vec{M}_q$ be the obtained model sequence. Note that for every $N \in \vec{M}_q$, there is $N' \in \vec{M}_r \cup \vec{M}_d$ such that $N \cap \omega_1 = N' \cap \omega_1$. 

Next, we would like to construct the function $f_q$, extending $f_r \cup f_d$. We will use the following claim.
\begin{lemma}
Let $\vec{M}$ be a condition in the pure side condition forcing. Let $f$ be a finite function from $T \cup [T]$ to $\omega$ such that:
\begin{enumerate}
\item $f$ is weakly monotonic.
\item $\dom f$ is closed under projections to $N \cap \omega_1$ for $N \in \vec{M}$.
\item For every $N \in \vec{M}$, $b \in N \cap [T]$ there are {\bf no} $x_1, x_2 \in \dom f \cap \big((b \setminus N) \cup \{b\}\big)$ such that $f(x_1) < f(x_2)$. 
\end{enumerate}
Then, there is a function $f_*$ extending $f$ such that $\langle f_*, \vec{M}\rangle$ is a condition.
\end{lemma}
\begin{proof}
This lemma resembles Lemma \ref{lemma:closure-under-branches-meets-and-projections}, but differs from it as the starting point of Lemma \ref{lemma:closure-under-branches-meets-and-projections} is a function with domain closed under meets and we are adding the projections and the branches, while here we start with a function with domain closed under certain projections, and close it under meets and the added branches.

 Let us construct recursively a sequence of functions $f_i$ by starting with $f_0 = f$ and repeatedly closing the domain under meets, projections to levels of models and adding branches. We will show, as in Lemma \ref{lemma:closure-under-branches-meets-and-projections}, that this process terminates after finitely many steps and produces a condition. The order in which we introduce new elements is the same as in Lemma \ref{lemma:closure-under-branches-meets-and-projections}.

We will maintain the following inductive hypothesis:
\begin{itemize}
\item $f_i$ is weakly monotone.
\item there is no $b \in N \in \vec{M}$ and $x, y \in \dom f_i$ such that $x\in b\setminus N$ $y \in b \setminus N$ or $y=b$ and $f_i(x) \neq f_i(y)$. 
\end{itemize} 


For each $i$ we add to the domain of $f_{i}$ the element $t_i$ which is either a branch $b$ such that there is $N \in \vec{M}_r$ with $b \in N$, $x \in b \setminus N$, $x \in \dom f_i$, and then we define 
\[f_{i+1}(t_i) = \max(\{f_i(w) \mid w \leq_T t_i,\, w \in \dom f_i\},\]
or $t_i$ an element of the form $s \wedge t$ for $s, t\in \dom f_i$ or the form $\proj_{N\cap \omega_1}(s)$ for $N \in \vec{M}_q$, $s\in \dom f_i$. In this case we define 
\[f_{i+1}(t_i) = \min(\{f_i(w) \mid t_i \leq_T w\,\, w\in \dom f_i\}).\] 
By Lemma \ref{lemma:closure-under-branches-meets-and-projections}, such a process must converge after finitely many steps, and maintains the weak monotonicity.

Let us prove by induction on $i$ that the inductive hypothesis holds. The argument for the preservation of closure under branches is very similar to the proof of Lemma \ref{lemma:closure-under-branches-meets-and-projections}. Let us go through the different possibilities:
\begin{itemize}
\item $t_i$ is a new branch, $b$ add by a model $N \in \vec{M}$ and there is $w\in \dom f_i$ such that $w\in b$. We need to show that $f_i(w) = f_i(b)$. Indeed, before we introduce $b$ we must introduce $u = \proj_{N\cap \omega_1}(b)$. By the inductive hypothesis, $f_i(u) = f_i(w)$, and thus, this is the value of $b$.

\item $t_i$ is a new element, introduced as the meet of a pair of elements, and there is a branch $b \in N$ such that $t_i \in b \setminus N$. Let us assume that there is also $w \in (b \cup \{b\}) \cap \dom f_i$, $t_i < w$. 
By the order in which new elements are introduced, $w$ must be in $\dom f_i$. As $\dom f$ is closed under projections to $N \cap \omega_1$, $\proj_{N\cap \omega_1}(b) = \proj_{N\cap \omega_1}(w) \in \dom f$ and thus in $\dom f_i$. By the inductive hypothesis, $f_i(w) = f_i(\proj_{N\cap \omega_1}(b))$. Since $t_i$ is new, $\proj_{N\cap\omega_1}(t_i) < t_i$ and thus $b$ is introduced before $t_i$. 
So, by the inductive hypothesis, $f_i(w) = f_i(b)$. Moreover, there is no element $z$ above $t_i$ such that $f_i(z) < f_i(b)$, as this would contradict weak monotonicity. So, $f_i(t_i) = f_i(b)$, as needed.
\item $t_i$ is the projection to $N \cap \omega_1$ of some element. Clearly, $t_i$ must be the projection of a certain branch $c \in \dom f_i \setminus \dom f$, as otherwise it would not be new. As $c$ is introduced before $t_i$, it must belong to a model $N'$ with $N'\cap\omega_1 < N \cap \omega_1$. So, there is some $u < t_i$ in $\dom f_i$, $u = \proj_{N' \cap \omega_1}(c)$ and $f_i(u) = f_i(c) = f_{i+1}(t_i)$. Let us assume that there is a branch $b \in N''$ such that $t_i \in b \setminus N''$, and either $f_i(b) \neq f_i(u)$ or there is $w\in b \setminus N''$ with $f_i(w) \neq f_{i+1}(t_i)$. We split into cases:
\begin{itemize}
\item $b \in \dom f_i$. In this case, if $f_i(b) \neq f_i(u)$ then by monotonicity, $f_i(b) > f_i(u)$. If $\proj_{N'' \cap\omega_1}(b) < t_i$, then by the inductive hypothesis, $f_i(\proj_{N'' \cap\omega_1}(b)) = f_i(u)$, (as it is in $c$ as well), but then $f_i(b) = f_i(u)$. Otherwise, as $b\in \dom f_i$, we conclude that $b \in \dom f$, by the order of introduction of elements. But then, by the closure of $f$ under projections, $t_i = \proj_{N'' \cap\omega_1}(b) \in \dom f$ to begin with. 
\item $b\notin \dom f_i$ and there is $w\in b \setminus N''$, $f_i(w) \neq f_{i+1}(t_i)$. Again, if $w > t_i$ then it must be from $\dom f$ and then $t_i = \proj_{N \cap \omega_1}(w) \in \dom f$. So $w < t_i$, and in particular $w\in c$. We again obtain that $f_i(w) = f_i(c)$, as wanted.  
\end{itemize}

\end{itemize}
\end{proof}

In order to apply the lemma, we need to verify that $f = f_r \cup f_d$ satisfies the assumptions of the lemma.

Indeed, the requirements in the statement of the lemma indeed cover all cases with respect to models for $\vec{M}_d$, as the symmetric case cannot occur: if $N \in \vec{M}_q \setminus \vec{M}_d$ if $x \in \dom f_d \setminus N$ and there is $b \in N$ such that $x \in N$ then $N \subseteq M$ and thus $b \in M$ and thus $b \in \dom f_d$ and thus $b \in \dom f_{d^M}$. 



\end{proof}
Let $\left|\mathfrak{D}\right|=\zeta$. We want to fix an enumeration which is absolute between $H(\lambda)$ and $M$.

Since $M \models \left|\mathfrak{D}\right| = \zeta $, $\exists f \in M,\ f : \zeta \rightarrow \mathfrak{D}$ a bijection. Therefore, $H(\lambda)\models f:\zeta\rightarrow \mathfrak{D}$ is a bijection. This bijection now fixes an absolute enumeration of $\mathfrak{D}$.

Denote $f_\alpha = f_{p_\alpha}$ and consider $\mathfrak{E} = \left\{ a^\alpha\ \Big|\ \alpha < \zeta \right\}$ where $a^\alpha$ is a choice of enumerations of $\dom f_{\alpha}$ for $p_\alpha\in\mathfrak{D}$ $\left( \dom f_p = \left\{ a^\alpha_0, \dots a^\alpha_{n-1} \right\} \right)$.

\begin{remark}
    The enumerations in $\mathfrak{E}\cap M$ are absolute between $H(\lambda)$ and $M$, since $M$ can describe $a^\alpha$ explicitly using a first order formula.
\end{remark}

\begin{claim}\label{claim:coloring}
    There exists a coloring (not-disjoint!) of $\zeta$, with the colors $P_{ij}$, $P_{Nk_0k_1}$ and $P_{Nkb_\star}$ for $i,j,k, k_0,k_1 < n$, $N\in \vec{M}_d\wedge (N\cap \omega_1) < (M\cap\omega_1)$ and $b_\star \in \dom f_d\cap\left[ T \right]$ with the following properties:

    \begin{equation*}
        \left(\alpha, \beta\in P_{ij}\rightarrow a^\alpha_i\parallel_T a^\beta_i\right)\wedge \left( \forall \alpha \in P_{ij},\ f_\alpha\left( a^\alpha_i \right) > f_d\left( d_j \right) \right)
    \end{equation*}
    \begin{equation*}
        \begin{matrix}\left(\alpha,\beta\in P_{Nkb_\star}\rightarrow a^\alpha_{k}\parallel a^\beta_{k}\right) & \wedge \\ \left( \forall\alpha\in P_{Nkb_\star}, \level( a^\alpha_k ) \geq N\cap\omega_1 \wedge f_\alpha ( a^\alpha_k) < f_d( b_\star )\right) &\end{matrix}
    \end{equation*}
    and
    \begin{multline*}
        \left(\alpha, \beta \in P_{Nk_0k_1}\rightarrow \pi_{N\cap\omega_1}\left( a^\alpha_{k_1}\right)\neq\pi_{N\cap\omega_1}\left( a^\beta_{k_1} \right) \vee a^\alpha_{k_1} \parallel a^\beta_{k_1}\right) \wedge \\ \left( \forall \alpha \in P_{Nk_0k_1},\ \level(a^\alpha_{k_0}) \geq N\cap\omega_1\wedge a^\alpha_{k_0} < a^\alpha_{k_1} \wedge f_\alpha\left( a^\alpha_{k_0}\right) < f_\alpha\left( a^\alpha_{k_1} \right) \right) 
    \end{multline*}
\end{claim}

\begin{remark}
    Each color is supposed to capture a reason for an incompatibility with $d$. $P_{ij}$ captures reason \ref{incompatibility:1}, $P_{Nk_0k_1}$ captures reason \ref{incompatibility:2} and $P_{Nkb_\star}$ captures reason \ref{incompatibility:3}. 
    
Technically speaking, some of the \emph{names} of the colors are not members of $M$. This is a mere convention in the presentation of the proof. Formally, they should be replaced by natural numbers.    
\end{remark}

\begin{proof}
    Let $X\subset \zeta\cap M$ be a finite set. Consider the sets (defined in $H\left( \omega_3 \right)$).

    \[\begin{matrix}
        P_{ij}^X &= &\left\{ \alpha \in X\ \Big|\ a^\alpha_i < d_j \wedge f_\alpha\left( a^\alpha_i \right) > f_d\left( d_j \right)\right\} \\
        P_{Nk_0k_1}^X &= &\left\{ \alpha\in X\ \Big|\ \psi_1\left( \alpha, N\cap\omega_1, k_0, k_1 \right)\wedge\exists b\in \left[ T \right]\cap\left( N\backslash M \right), a^\alpha_{k_1}\in b \right\} \\
        P_{Nkb_\star}^X & = & \left\{ \alpha\in X\ \Big|\ \psi_2\left( \alpha, N\cap\omega_1, k, b_\star  \right)\wedge a^\alpha_k\in b_\star \right\} \end{matrix}\]
    where
\[\begin{matrix}
        \psi_1\left( \alpha, \beta, k_0, k_1 \right) & = & \level(a^\alpha_{k_0}) \geq \beta& \wedge & a^\alpha_{k_0} < a^\alpha_{k_1}& \wedge & f_\alpha(a^\alpha_{k_0}) < f_\alpha (a^\alpha_{k_1}) \\
        \psi_2\left( \alpha, \beta, k, b_\star \right) & = & \level(a^\alpha_k)\geq \beta &\wedge & f_\alpha(a^\alpha_k) < f_d(b_\star) & & \end{matrix}\]
        and
$i,j,k, k_0,k_1 < n$, $N\in \vec{M}_d$ such that $N\cap \omega_1 < M\cap\omega_1$, and $b_\star \in \dom f_d\cap\left[ T \right]$.

\begin{lemma}
    $P_{ij}^X,\ P_{Nk_0k_1}^X,\ P^X_{Nkb_\star}$ satisfy the properties listed in claim \ref{claim:coloring}.
\end{lemma}

\begin{proof}
    Let $\alpha,\beta \in X$.

    Assume $\alpha,\beta \in P_{ij}^X$ then $a^\alpha_i \parallel a^\beta_i$ since $a^\alpha_i,a^\beta_i \leq_T d_j$.

    Assume $\alpha,\beta \in P_{Nk_0k_1}^X$, if $\pi_{N\cap\omega_1}( a^\alpha_{k_1}) = \pi_{N\cap\omega_1}( a^\beta_{k_1} )$ then $\exists b_1,b_2\in \left[ T \right]\cap N\backslash M$ such that $a^\alpha_{k_1}\in b_1\wedge a^\beta_{k_1}\in b_2$. 
    
    We claim that $N \models b_1 = b_2$. Otherwise, $\exists \alpha < \omega_1^N$ such that $b_1\cap T_\alpha\neq b_2\cap T_\alpha$ which is a contradiction to  $\pi_{N\cap\omega_1}( a^\alpha_{k_1}) = \pi_{N\cap\omega_1}( a^\beta_{k_1} )$. Since $N\models b_1 = b_2$ and $N\prec H(\lambda)$ then $H(\lambda)\models b_1 = b_2$. Hence, $a^\alpha_{k_1}$ and $a^\beta_{k_1}$ are on the same branch i.e.\ $a^\alpha_{k_1}\parallel a^\beta_{k_1}$.

    Assume $\alpha,\beta \in P_{Nkb_\star}^X$, then $a^\alpha_k,a^\beta_k\in b_\star$ hence $a^\alpha_k \parallel a^\beta_k$.
\end{proof}

\begin{lemma}
    $X = \bigcup P_{ij}^X\cup\bigcup P_{Nk_0k_1}^X\cup\bigcup P_{Nkb_\star}^X$.
\end{lemma}

\begin{proof}
    Let $\alpha\in X$, since $X\subset \zeta\cap M,\ p_\alpha\perp d$, from one (or more) of the reasons \ref{incompatibility:1}, \ref{incompatibility:2} and \ref{incompatibility:3} for incompatibility of a condition from Lemma \ref{lem:reasons_for_incompatibility}.

    If the incompatibility is of type \ref{incompatibility:1} then there exists some index $i < n$ and $j < n$ such that $a^\alpha_i < d_j$ but $f_\alpha\left( a^\alpha_i \right) < f_d\left( d_j \right)$. In this case by definition $\alpha\in P_{ij}^X$.

    If the incompatibility is of type \ref{incompatibility:2} then $\exists N\in \vec{M}_d$ such that $N\notin M$ with some $b\in N$ and $x_1,x_2\in b\cap \dom f_\alpha\backslash N$ such that $x_1 < x_2$ and $f_\alpha\left( x_1 \right) < f_\alpha\left( x_2 \right)$. Since $\dom f_\alpha\cap T\backslash N \neq \emptyset$ and $p_\alpha\in M$ then $M\cap\omega_1 > N\cap\omega_1$ (by Corollary \ref{cor:M_cap_omega1_geq}). Let $k_0,k_1$ be the indices of $a^\alpha$ such that $a^\alpha_{k_0} = x_1\wedge a^\alpha_{k_1} = x_2$, then $\alpha \in P_{Nk_0k_1}^X$.

    If the incompatibility is from \ref{incompatibility:3} then $\exists N\in \vec{M}_d$ s.t. $N\notin M$ with some $b_\star \in N$ and $x_1\in b_\star\cap\dom f_\alpha \backslash N$ and $f_\alpha\left( x_1 \right) < f_d\left( b_\star \right)$. Let $k$ be the index such that $a^\alpha_{k} = x_1$ then $\alpha\in P_{Nkb_\star}^X$.

    Thus, $X = \bigcup P_{ij}^X\cup\bigcup P_{Nk_0k_1}^X\cup\bigcup P_{Nkb_\star}^X$.
\end{proof}

    We showed that we can construct a coloring in $H(\lambda)$ of every finite $X\subset \zeta\cap M$, satisfying the properties listed in claim \ref{claim:coloring}.

    Let $X\subset \zeta\cap M$, so
\[H(\lambda) \models \mathrm{There\ is\ a\ coloring\ of\ } X\ \mathrm{with\ the\ properties\ listed\ in\ claim\ \ref{claim:coloring}}.\]
    Recall that there is $M'\prec H(\lambda)$ such that $M = M'\cap H(\omega_2)$. So, 
\[M' \models \mathrm{There\ is\ a\ coloring\ of\ } X\ \mathrm{with\ the\ properties\ listed\ in\ claim\ \ref{claim:coloring}}.\]

    Let $P_{ij}^{X,M},P_{Nk_0k_1}^{X,M},P_{Nkb_\star}^{X,M}\in M$ be this coloring of $X$ inside $M$. Next we will use the compactness theorem to get a coloring of all of $\zeta$.

\begin{remark}
    \label{rem:why_in_M}
    For the reflection argument above to work every variable in the definition of the properties in claim \ref{claim:coloring} needs to be in $M$.

    Notice that in $P_{ij}$ the definition uses $f_d(d_j)\in\omega$ which is inside $M$. In $P_{Nk_0k_1}$ the definition uses $N\cap\omega_1$ which is in $M$ since $N\cap\omega_1 <M\cap\omega_1$. And in $P_{Nkb_\star}$ the definition uses $N\cap\omega_1$ and $f_d(b_\star)$ in the same manner as in $P_{ij}$.

As we remarked before, the mentions of $N$ and $b_\star$ as names for the colors are merely place holders.
\end{remark}

    Add $\zeta$ constant symbols $c_\alpha$ to our language $(\in)$ and another finite amount of constants $\widetilde{P}_{ij},\widetilde{P}_{Nk_0k_1},\widetilde{P}_{Nkb_\star}$ (or equivalently, unary predicates).
    Let us denote by $\omega_1^N = N \cap \omega_1$. Define the following theories in $M$
\[
\begin{matrix}
        \mathfrak{T}_0 &= &\{ c_\alpha\notin \widetilde{P}_{ij} \vee c_\beta\notin \widetilde{P}_{ij} & \mid & \alpha,\beta < \zeta,\ i, j< n,\ a^\alpha_i\perp a^\beta_i\} \\
        
        \mathfrak{T}_1 &= & \{ c_\alpha \notin \widetilde{P}_{ij} & \mid& \alpha <\zeta,\ i,j < n,\ f_\alpha(a^\alpha_i) < f_d(d_j) \} \\ 
        
        \mathfrak{T}_2 & = & \{ c_\alpha\notin \widetilde{P}_{Nk_0k_1} \vee c_\beta\notin \widetilde{P}_{Nk_0k_1}& \mid & \alpha <\zeta,\\ & & & & \pi_{\omega_1^N}\left( a^\alpha_{k_1}\right) = \pi_{\omega_1^N}\left( a^\beta_{k_1} \right)  \wedge a^\alpha_{k_1}\perp a^\beta_{k_1} \} \\
        
        \mathfrak{T}_3 &= & \{ c_\alpha \notin \widetilde{P}_{Nk_0k_1}& \mid& \alpha <\zeta, \\ & & & & \level(a^\alpha_{k_0}) < \omega^N_1\vee a^\alpha_{k_0} < a^\alpha_{k_1}\vee f_\alpha(a^\alpha_{k_0})\geq f_\alpha(a^\alpha_{k_1})  \} \\
        
        \mathfrak{T}_4 &= & \{ c_\alpha\notin \widetilde{P}_{Nkb_\star} \vee c_\beta\notin \widetilde{P}_{Nkb_\star}& \mid & \alpha < \zeta,\ \\ & & & & \pi_{\omega^N_1}(a^\alpha_{k}) = \pi_{\omega^N_1}(a^\beta_k)\wedge a^\alpha_k \perp a^\beta_k  \} \\
        
        \mathfrak{T}_5 &= & \{ c_\alpha\notin \widetilde{P}_{Nkb_\star}& \mid & \alpha  <\zeta, \\ & & & & \level(a^\alpha_k) < \omega_1^N\vee f_\alpha(a^\alpha_k)\geq f_d(b_\star)  \} \\
    \end{matrix}\]
and 
\[\begin{matrix}        \mathfrak{T}_6 &= & \{ \bigvee_{ij}c_\alpha \in \widetilde{P}_{ij}\vee\bigvee_{Nk_0k_1}c_\alpha \in \widetilde{P}_{Nk_0k_1} \vee \bigvee_{Nkb\star}c_\alpha\in\widetilde{P}_{Nkb_\star}\ & \mid & \alpha <\zeta \} 
\end{matrix}\]
    Consider $\mathfrak{T} = \bigcup_i\mathfrak{T}_i$.

\begin{lemma}
    $M\models \mathfrak{T}$ is finitely satisfiable.
\end{lemma}

\begin{proof}
    Since $\mathfrak{T}$ is definable in $M$ using parameters from $M$ (see Remark \ref{rem:why_in_M}), and $M$ is a model of enough set theory $\mathfrak{T}\in M$.
    Work in $M$ and let $\mathcal{X}\subset\mathfrak{T}$ be finite. Let $X = \left\{ \alpha\ \Big|\ c_\alpha \mathrm{\ is\ in\ a\ formula\ of\ }\mathcal{X} \right\}$. Interpret $c_\alpha$ as $\alpha$ and $\widetilde{P}_{ij} = P_{ij}^{X,M},\ \widetilde{P}_{Nk_0k_1} = P_{Nk_0k_1}^{X,M},\ \widetilde{P}_{Nkb_\star} = P_{Nkb_\star}^{X,M}$. By the previous argument this satisfies $\mathcal{X}$. Since this is true for every finite $\mathcal{X}\subset\mathfrak{T}$, $\mathfrak{T}$ is finitely satisfiable.
\end{proof}

    Since $\mathfrak{T}$ is finitely satisfiable, by the compactness theorem, it is satisfiable. Let $P_{ij},\ P_{Nk_0k_1},\ P_{Nkb_\star}$ be the sets which satisfy it. Thus, $P_{ij},\ P_{Nk_0k_1},\ P_{Nkb_\star}$ color all of $\zeta$ in $M$ i.e.
    \begin{equation*}
        M\models P_{ij},P_{Nk_0k_1},P_{Nkb_\star}\  is\ a\ coloring\ of\ \zeta\ with\ the\ properties\ listed\ in\ \ref{claim:coloring}
    \end{equation*}
    hence
    \begin{equation*}
        H(\lambda)\models P_{ij},P_{Nk_0k_1},P_{Nkb_\star}\  is\ a\ coloring\ of\ \zeta\ with\ the\ properties\ listed\ in\ \ref{claim:coloring}
    \end{equation*}
\end{proof}

Denote by $P_{ij}^\mathfrak{D},P_{Nk_0k_1}^\mathfrak{D},P_{Nkb_\star}^\mathfrak{D}$ the set of conditions in our forcing notion and let  $P^\mathfrak{D} = \left\{ p_\alpha\in \mathfrak{D}\ \Big|\  \alpha\in P\right\}$. Then there exists some color $P$ with $d\in P^\mathfrak{D}$.

\begin{notation}
    For condition $p$, if indices $i, j$ are not the reason for incompatibility with $d$ in the sense of incompatibility reason \ref{incompatibility:1} we write $p \parallel_{(1)_{ij}}d$. Similarly, with incompatibility reasons \ref{incompatibility:2} and \ref{incompatibility:3}.
\end{notation}

\begin{claim}
    \label{claim:parallel_1}
    If $d\in P_{ij}^\mathfrak{D}$ then $\exists X\subset P_{ij}^\mathfrak{D},\ X\in M $ such that $d\in X$ and $\forall p \in X\cap M,\, p\parallel_{(1)_{ij}} d$.
\end{claim}

\begin{proof}
    Suppose $d\in P_{ij}^\mathfrak{D}$ for some $i,j < n$. If $d_i \in M$, take the set

    \begin{equation*}
        X = \left\{ \alpha \in P_{ij}\, \Big|\, a^\alpha_i = d_i \right\}
    \end{equation*}

    Since $f_d(d_i) > f_d(d_j)$ and $d_i,d_j\in \dom f_d$ then $d_i \nleq d_j$ proving every condition in $X$ is $\parallel_{(1)_{ij}}$ to $d$. Since $d_i\in M$, $X$ is definable in $M$ i.e.\ $X\in M$.

    If $d_i \notin M$, then the set $Y = \left\{ a^\alpha_i\ \Big|\ \alpha\in P_{ij} \right\}$ is unbounded since $d_i \in Y\wedge d_i\notin M$ by lemma \ref{lem:X_is_unbounded}. Moreover, $\forall x,y\in Y,\ x\parallel y$. $Y$ then defines a branch via
    \begin{equation*}
        B = \bigcup\left\{ seg(x)\ \Big|\ x\in Y \right\}
    \end{equation*}

    This branch goes through $d_i$. Since $Y$ is definable in $M$, $Y\in M$ and using $Y$ we can build $B$ thus $B\in M$. 
    Then, by condition (\ref{condition:2}), $B\in \dom f_d$ and $f_d(B) = f_d(d_i)$. We consider $d_i\wedge d_j\in \dom f_d$. If $d_i \wedge d_j\notin M$ then we get a contradiction since

    \begin{equation*}
        f_d(B) = f_d(d_i\wedge d_j) \leq f_d(d_j) < f_d(d_i) = f_d(B)
    \end{equation*}

    If $d_i\wedge d_j\in M$ then the set

    \begin{equation*}
        X = \left\{ p\in P_{ij}^\mathfrak{D}\ \Big|\, \forall x\in \dom f_p\backslash \dom f_{d^M},\, \level(x) > \level(d_i\wedge d_j) \right\}
    \end{equation*}

    will satisfy the requirement. First $d\in X$ since $\forall x\in \dom f_d\backslash \dom f_{d^M}$, $\level(x) \geq  M\cap\omega_1$ and since $d_i\wedge d_j\in M,\, \level(d_i\wedge d_j) < M\cap\omega_1$. Let $p\in X\cap M$, then the $i^{th}$ element is above $d_i\wedge d_j$ (by definiton) and below $d_i$ ($d_i \notin M$) hence it cannot have incomptibility of type \ref{incompatibility:1}, for the indices $i,j$.
\end{proof}

\begin{claim}
    \label{claim:parallel_2}
    If $d\in P_{Nk_0k_1}^\mathfrak{D}$ then $\exists X\subset P_{Nk_0k_1}^\mathfrak{D},\ X\in M $ such that $d\in X$ and $\forall p \in X\cap M,\, p \parallel_{(2)_{Nk_0k_1}} d$.
\end{claim}

\begin{proof}
    Suppose $d\in P_{Nk_0k_1}^\mathfrak{D}$ for some $N,k_0,k_1$ and suppose $d_{k_0}\in M$. If $d_{k_1}\notin M$ then

    \begin{equation*}
        Y = \left\{ a^\alpha_{k_1}\, \Big|\, \alpha\in P_{Nk_0k_1},\ \pi_{N\cap\omega_1}(a^\alpha_{k_1}) = \pi_{N\cap\omega_1}(d_{k_1}) \right\}
    \end{equation*}

    defines a branch $B$ going through $d_{k_1}$. Then by closure under branches of $d$, $B\in \dom f_d$. Consider the set

    \begin{equation*}
        X = \left\{ \alpha \in P_{Nk_0k_1}\, \Big|\, a^\alpha_{k_0} = d_{k_0}\right\}
    \end{equation*}

    Then $d\in X$ and taking the subset

    \begin{equation*}
        X' = \left\{ \alpha\in  X\,  \Big|\, \forall x\in \dom f_{p_\alpha}\backslash \dom f_{d^M},\ \level(x) > \level(d_{k_0}) \right\}
    \end{equation*}

    Assume $\exists b\in N\cap\left[ T \right]\cap\dom f_d$ and $\alpha\in X'$ such that $a^{\alpha}_{k_1}\in b$ then $d_{k_0} < a^\alpha_{k_1} < B\wedge b\in \dom f_d$. Notice that there can be at most one branch satisfying this, since the projection on $N\cap\omega_1$ is the same for all branches going through $d_{k_0}$. If $B\wedge b \notin M$ then from condition \ref{condition:2}, $ f_d(d_{k_1} )= f_d(B) = f_d(b) = f_d(d_{k_0})$ which is a contradiction. Hence, $B\wedge b \in M$, consider the set
    \[      X'' = \left\{ \alpha\in  X,  \Big|\, \forall x\in \dom f_{p_\alpha}\backslash \dom f_{d^M},\ \level(x) > \level(B\wedge b) \right\}\]
    By definition $d \in X''$, and for all $b'\in N$ and for all $p_\alpha\in X''$ we have that $a^\alpha_{k_1} \notin b'$. Hence, $X''$ is the required set.

    If $d_{k_1}\in M$, then taking
    \begin{equation*}
        X = \left\{\alpha\in P_{Nk_0k_1}\, \Big|\, a^\alpha_{k_0} = d_{k_0}\wedge a^\alpha_{k_1} = d_{k_1}\right\}.
    \end{equation*}
    gives the required set. If $d_{k_0}\notin M$, we define
    \begin{equation*}
        Y = \left\{ a^\alpha_{k_1}\ \Big|\  \alpha \in P_{Nk_0k_1}\wedge \pi_{N\cap\omega_1}\left( a^\alpha_{k_1} \right) = \pi_{N\cap\omega_1}\left( d_{k_1} \right) \right\}.
    \end{equation*}

    Since $d_{k_1}\in Y$ and $d_{k_1}\notin M$, by lemma \ref{lem:X_is_unbounded} $Y$ is unbounded and defines a branch

    \begin{equation*}
        B = \bigcup\left\{ seg(x)\ \Big|\ x\in Y \right\}
    \end{equation*}

    This branch goes through $d_{k_1}$ therefore also through $d_{k_0}$ and we get a contradiction since

    \begin{equation*}
        f_d(B) = f_d(d_{k_0}) < f_d(d_{k_1}) = f_d(B).
    \end{equation*}
\end{proof}

\begin{claim}
    \label{claim:parallel_3}
    If $d\in P_{Nkb_\star}^\mathfrak{D}$ then $\exists X\subset P_{Nkb_\star}^\mathfrak{D},\ X\in M $ such that  $d\in X$ and $\forall p \in X\cap M,\,  p \parallel_{(3)_{Nkb_\star}} d$.
\end{claim}

\begin{proof}
    Suppose $d\in P_{Nkb_\star}^\mathfrak{D}$ for some $N,k,b_\star$ and suppose $d_k\in M$ consider the set

    \begin{equation*}
        X = \left\{\alpha \in P_{Nkb_\star}\, \Big|\, a^\alpha_k = d_k \right\}.
    \end{equation*}

    By the same arguments this is the required set.

    If $d_k\notin M$, then the set $\left\{ a^\alpha_k\, \Big|\, \alpha \in P_{Nkb_\star} \right\}$ defines a branch $B\in M$ going through $d_k$ and by closure under branches, $B\in \dom f_d$
    \begin{equation*}
        f_d(B) = f_d(d_k) < f_d(b_\star)
    \end{equation*}

    Consider $b_\star\wedge B\in \dom f_d$. If $b_\star\wedge B\notin M$ then

    \begin{equation*}
        f_d(B) = f_d(B\wedge b) = f_d(b)
    \end{equation*}

    Contradiction. Hence, $b_\star\wedge B\in M$ and the set
    \begin{equation*}
        X = \left\{p \in P_{Nkb_\star}^\mathfrak{D}\, \Big|\, \forall x\in \dom f_p\backslash \dom f_{d^M}, x \parallel b_\star\wedge B\rightarrow b_\star\wedge B < x\right\}
    \end{equation*}
    contains $d$. Let $p\in X$ then since every new element of $\dom f_p$ which can cause incompatibility (from reason \ref{incompatibility:3}) is \underline{above} the point where the branches $B$ and $b_\star$ diverge thus cannot be the cause for the incompatibility.
\end{proof}

\begin{claim}
    \label{claim:p_parallel_d}
    $\exists p\in \mathfrak{D}$ such that $p\parallel d$.
\end{claim}

\begin{proof}
    Since

    \begin{equation*}
        \mathfrak{D} = \bigcup P_{ij}^\mathfrak{D}\cup \bigcup  P_{Nk_0k_1}^\mathfrak{D} \cup \bigcup P_{Nkb_\star}^\mathfrak{D}
    \end{equation*}
    $\exists P^\mathfrak{D}$ a color such that $d\in P^\mathfrak{D}$. By claims \ref{claim:parallel_1}, \ref{claim:parallel_2}, \ref{claim:parallel_3} there exists $X\subset P\wedge X\in M$ such that $d\in X$ and $\forall x\in X\cap M$ there is one less incompatibility reason between $x$ and $d$ (depends on the color $P$). Employing this argument repeatedly gives, after a finite number of steps, a set $X\in M$ such that $d\in X$ and $\forall x\in X\cap M,\, x\parallel d$. Since $d\in X$, $M$ thinks $X\neq\emptyset$ thus $\exists p\in X\cap M$.
\end{proof}

    By claim \ref{claim:p_parallel_d}, let $p\in \mathfrak{D}\cap M$ be a condition such that $p\parallel d$. This is a contradiction to the assumption that all conditions in $\mathfrak{D}\cap M$ are incompatible with $d$.
\end{proof}

\begin{theorem}
    $\mathbb{S}$ is proper for $\mathcal{T}$.
\end{theorem}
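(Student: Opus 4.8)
The plan is to mirror the proof of Lemma~\ref{lemma:P_is_proper}, exploiting the fact that a transitive $W\in\mathcal T$ is far better behaved than a countable $M\in\mathcal S$. I would reduce properness to showing that, for the relevant $W\prec H(\omega_2)$ in $\mathcal T$ (with $T\in W$) and every $p\in\mathbb S\cap W$, the condition obtained by appending $W$ to the model sequence,
\[q = \langle f_p,\ \vec{M}_p{}^\frown\langle W\rangle\rangle,\]
is a $W$-generic condition below $p$. That $q\in\mathbb S$ and $q\le p$ is immediate: since $W$ is transitive and every $N\in\vec{M}_p$ lies in $W$, we have $N\subseteq W$, so $W$ sits on top of the $\in$-increasing sequence and the closure under intersections is automatic ($N\cap W=N$); Corollary~\ref{cor:p_cup_M} handles the side-condition side. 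The function requirements are inherited from $p$, and Requirement~\ref{condition:2} for the new model $W$ is vacuous because $\dom f_p\subseteq W$. The crucial simplification is that $W$ is transitive with $\omega_1\subseteq W$, so $W\cap\omega_1=\omega_1$ and, as $|T|=\aleph_1$ and $T\in W$, in fact $T\subseteq W$: \emph{every tree node already lives in $W$}, and the only objects of a condition that can escape $W$ are cofinal branches in $[T]$ and models sitting $\in$-above $W$.

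To show $q$ is $W$-generic, let $D\in W$ be dense; passing to a dense subset I may assume $D\subseteq E$ (Lemma~\ref{lemma:closure-under-branches-meets-and-projections}), and given $q'\le q$ I may, after extending, assume $q'\in D$. I form the residue $\mathrm{res}_W(q')=\langle f_{q'}\restriction W,\ \{N\in\vec{M}_{q'}\mid N\in W\}\rangle$; its side-condition part is a condition by Lemma~\ref{lem:why_d_M_is_a_condition}, and its function part is a condition because all tree nodes of $q'$ lie in $W$ and every branch belonging to a model of $q'$ that is itself in $W$ again lies in $W$. Being a finite object over $W$, $\mathrm{res}_W(q')\in W$. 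I then set
\[\mathfrak D=\{r\in D\mid r\le \mathrm{res}_W(q'),\ |f_r|=|f_{q'}|,\ \{N\cap\omega_1\mid N\in\vec{M}_r\}=\{N\cap\omega_1\mid N\in\vec{M}_{q'}\}\cap W\},\]
so that $\mathfrak D\in W$ and $q'\in\mathfrak D$, and assume toward a contradiction that every $r\in\mathfrak D\cap W$ is incompatible with $q'$.

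The point is that, since all nodes of $q'$ lie in $W$ and the $W$-internal models of $q'$ are recorded in $\mathrm{res}_W(q')$, Lemma~\ref{lem:q_leq_res} already makes the side-condition parts directly compatible; so the incompatibility of $r$ with $q'$ must, as in Lemma~\ref{lem:reasons_for_incompatibility}, come from the function, and only through a branch living in a model $N\in\vec{M}_{q'}$ with $W\in N$ (hence $N\notin W$). These are precisely incompatibilities of types~\ref{incompatibility:1}, \ref{incompatibility:2} and~\ref{incompatibility:3}, so the full colouring of Claim~\ref{claim:coloring} applies verbatim, with $W$ in place of $M$ and $q'$ in place of $d$. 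Colouring every finite $X\subseteq\mathfrak D\cap W$ inside $H(\omega_2)$, reflecting into $W$ (using $M'\prec H(\lambda)$ with $M'\cap H(\omega_2)=W$) and applying compactness as in Claim~\ref{claim:coloring}, I obtain a single colouring of $\mathfrak D$ in $W$; the peeling argument of Claims~\ref{claim:parallel_1}--\ref{claim:p_parallel_d} then produces $r\in\mathfrak D\cap W$ compatible with $q'$, the desired contradiction.

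I expect the main obstacle to lie in the branch dichotomy inside the peeling step, because the tool used there in Lemma~\ref{lemma:P_is_proper}, namely Lemma~\ref{lem:X_is_unbounded}, is \emph{vacuous} for transitive $W$ (every subset of $T$ lying in $W$ is contained in $W$). I would replace it by a direct dichotomy on boundedness in $\omega_1$: for a branch-color such as $P_{Nkb_\star}$, the associated node set $Y=\{a^\alpha_k\mid\alpha\in P_{Nkb_\star}\}\in W$ consists of pairwise comparable nodes of the branch $b_\star$; if $\sup\{\Lev(y)\mid y\in Y\}=\omega_1$ then $Y$ generates the cofinal branch $b_\star=\{u\mid\exists y\in Y,\ u\le_T y\}$, whence $b_\star\in W$, contradicting $b_\star\notin W$, while if $Y$ is bounded I restrict $\mathfrak D$ to the conditions whose new nodes lie strictly above that bound, removing the color and recursing (the type~\ref{incompatibility:2} colors $P_{Nk_0k_1}$ being handled as in Claim~\ref{claim:parallel_2}). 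After finitely many steps a nonempty-from-$W$'s-viewpoint set of conditions compatible with $q'$ remains, yielding $r\in D\cap W$ with $r\parallel q'$ and establishing that $\mathbb S$ is proper for $\mathcal T$; together with Lemma~\ref{lemma:P_is_proper} and the stationarity of $\mathcal T$ in $P_{\omega_2}(H(\lambda))$, this gives preservation of $\omega_2$.
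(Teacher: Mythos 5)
Your setup (appending $W$ to the model sequence, forming $\mathrm{res}_W(q')$, noting that $T\subseteq W$ so only branches and models above $W$ can escape) matches the paper's, but from there you take a much heavier route than the paper does, and the route as written has a concrete gap. The gap is in your replacement for Lemma \ref{lem:X_is_unbounded} in the peeling step: for a branch-colour $P_{Nkb_\star}$ you argue that if $Y=\{a^\alpha_k \mid \alpha\in P_{Nkb_\star}\}$ is unbounded then it generates $b_\star$, ``whence $b_\star\in W$, contradicting $b_\star\notin W$.'' But nothing forces $b_\star\notin W$. The hypothesis of incompatibility type \ref{incompatibility:3} is only that the \emph{model} $N$ with $b_\star\in N$ is not in $W$ (e.g.\ $W\in N$); the branch $b_\star$ itself can perfectly well lie in $W$ (it then lies in $N\cap W\in\mathcal S$). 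In that case $b_\star\in\dom f_{d^W}$, yet a condition $r\in\mathfrak D\cap W$ can still place a new tree node $x\in b_\star\setminus N$ with $f_r(x)<f_r(b_\star)=f_d(b_\star)$ --- weakly monotone and legal for $r$, since $r$ carries no closure obligation for $N$ --- so type \ref{incompatibility:3} incompatibility genuinely occurs with $b_\star\in W$ and your dichotomy yields no contradiction there. More generally, the claim that Claim \ref{claim:coloring} and Claims \ref{claim:parallel_1}--\ref{claim:parallel_3} apply ``verbatim'' with $W$ in place of $M$ is asserted rather than checked, and several of those case analyses lean on countability of $M$ or on Lemma \ref{lem:X_is_unbounded} in ways that need real reworking.

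The reason the paper's proof is a short paragraph is a simplification you set up but did not use. Since $T\subseteq W$, the condition $d^W$ already contains \emph{every} tree node of $d$ with its $f_d$-value. The paper therefore adds one more constraint to $\mathfrak D$, namely $|\dom f_r\cap[T]|=|\dom f_d\cap[T]|$ in addition to $|f_r|=|f_d|$ and $r\le d^W$. Together these force $\dom f_r\cap T=\dom f_d\cap T$ (a finite superset of equal cardinality), so every $r\in\mathfrak D$ differs from $d^W$ only by branches. Then all three incompatibility types of Lemma \ref{lem:reasons_for_incompatibility} are vacuous for every $r\in\mathfrak D\cap W$: each of them requires a tree node of $r$ not already controlled by $d$, and there are none (the nodes of $r$ lying outside a model $N\in\vec M_d\setminus W$ are nodes of $d$ itself, whose values are already dictated by $d$'s own closure under branches). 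Hence any member of $\mathfrak D\cap W$ --- which is nonempty since $\mathfrak D\in W$ and $d$ witnesses $\mathfrak D\neq\emptyset$ --- is compatible with $d$, and no colouring, compactness, or peeling is needed. I would recommend restructuring your argument around this observation rather than repairing the peeling step.
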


\begin{proof}
    Even though this case is essentially covered by the proof of Lemma \ref{lemma:P_is_proper} we present it because of its simplicity. Let $W\in \mathcal{T}$, again we go toward contradiction, the difference here is that by a simple re-definition of $\mathfrak{D}$ we can avoid all reasons for incompatibility listed in Lemma \ref{lem:reasons_for_incompatibility}. Define
\[
        \mathfrak{D} =  \left\{ r\in D\, \Big|\, \begin{matrix}
r\leq d^W \wedge |\vec{M}_r| = |\vec{M}_d|\wedge \\
        |f_r| = |f_d|\wedge |\dom f_r\cap[T]| = |\dom f_d\cap[T]|\end{matrix} 
        \right\}
\]

    Since $d\in\mathfrak{D}$ this set is non-empty, and since $d\notin W$ then $\mathfrak{D}\cap W \neq \emptyset$. We claim that all conditions in $D\cap W$ are compatible to $d$, to do that we need to show all reasons for incompatibility are not applicable here. The argument is simple, since $T\subset W$ and $|f_r| = |f_d|\wedge |\dom f_r\cap[T]| = |\dom f_d\cap[T]|$, the conditions in $\mathfrak{D}$ can only add  branches. Now taking any $p\in \mathfrak{D}\cap W$ leads to a contradiction.
\end{proof}

\section{The Generic Object}\label{section:the-generic-object}

Let $G\subset \mathbb{S}$ be a generic filter. Consider the function

\begin{equation*}
    f_G = \bigcup_{ \langle  f,\vec{M}\rangle\in G} f
\end{equation*}

\begin{claim}
    $\range(f) =\omega$.
\end{claim}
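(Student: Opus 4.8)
The plan is to reduce everything to a density argument. Since $f_G = \bigcup_{\langle f,\vec M\rangle \in G} f$, we have $\range(f_G) = \bigcup_{p\in G}\range(f_p)$, so $n \in \range(f_G)$ exactly when $G$ meets the set
\[
D_n = \left\{\, p \in \mathbb{S} \ \middle|\ n \in \range(f_p)\,\right\}.
\]
Thus it suffices to show that $D_n$ is dense for every $n < \omega$; genericity then gives $n \in \range(f_G)$ for all $n$, i.e.\ $\range(f_G) = \omega$. The case $n = 0$ is immediate: by Requirement (4) of Definition \ref{def:forcing_poset} every condition satisfies $f_p(\emptyset_{root}) = 0$, so $0 \in \range(f_p)$ for all $p$ and $D_0 = \mathbb{S}$.

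For $n \geq 1$, I would fix an arbitrary $p = \langle f_p, \vec{M}_p\rangle$ and produce $q \leq p$ with $n \in \range(f_q)$ by adjoining a single fresh tree node $t$ with $f_q(t) = n$, chosen so that \emph{weak monotonicity} is respected. Let $m = \max \range(f_p)$. If $n \geq m$, then every value currently in use is $\le n$, so I would take $t$ at a level strictly above $\max\{\Lev(x) \mid x \in \dom f_p \cap T\}$ and off the finitely many branches in $\dom f_p \cap [T]$ (such $t$ exists since each level of the $\omega_1$-tree $T$ is infinite while only finitely many nodes are excluded); then no element of $\dom f_p$ lies above $t$, every element below $t$ has value $\le m \le n$, and setting $f(t)=n$ is monotone. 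If $n < m$, let $Z = \{z \in \dom f_p \cap T \mid f_p(z) > n\}$, which is finite and nonempty, and pick a $\le_T$-minimal $z_0 \in Z$. By closure of $\dom f_p$ under meets (Requirement (3)), every domain node strictly below $z_0$ has value $\le n$ (a domain node $a <_T z_0$ with $f_p(a) > n$ would lie in $Z$, contradicting minimality of $z_0$). Letting $u$ be the highest domain node strictly below $z_0$, with $f_p(u) \le n < f_p(z_0)$, I would insert a fresh node $t$ with $u <_T t$ and either $t <_T z_0$ (when there is room on the path below $z_0$) or $t$ incomparable to $z_0$ (a sibling obtained from a splitting of the tree below $z_0$, which exists because a Kurepa tree has $\aleph_2$ branches and hence splits cofinally). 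A meet computation as in Lemma \ref{lem:wedge_properties} shows that every domain node above $t$ is $\ge_T z_0$ and hence has value $> n$, so again $f(t) = n$ is weakly monotone.

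Finally, the partial function $f_p \cup \{(t,n)\}$ is weakly monotone but its domain need not be closed under meets, projections, or the branch-closure of Requirement (2). Here I would invoke the closure machinery developed for properness: by (the argument of) Lemma \ref{lemma:closure-under-branches-meets-and-projections} one extends this function to a genuine condition $q = \langle f_q, \vec M_p\rangle \le p$ in finitely many steps, adding meets and projections (which receive values below those already present) and cofinal branches forced by Requirement (2) (which receive the value of the witnessing node). None of these steps lowers or removes the value at $t$, so $n \in \range(f_q)$ and $q \in D_n$, establishing density. I expect the main obstacle to be the geometric existence of the fresh node $t$ in the case $n < m$: one must guarantee a usable node just below (or beside) a minimal high-value node $z_0$, which is where the branching structure of the Kurepa tree is genuinely used, together with the verification that the subsequent closure to a condition does not interfere with the realized value $n$.
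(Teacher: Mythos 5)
Your reduction to the density of $D_n$ is exactly the paper's, and your case $n \ge m$ is essentially sound, but the case $n < m$ contains a genuine gap. Two smaller issues first: $Z$ can be empty even when $n<m$, since the maximum of $\range(f_p)$ may be attained only at branches in $\dom f_p\cap[T]$; and nothing guarantees a fresh node in the interval $(u,z_0)$ nor a splitting of $T$ there, so the node $t$ you want may simply not exist. The serious problem is Requirement (\ref{condition:2}) of Definition \ref{def:forcing_poset}. Suppose some $N\in\vec M_p$ has $N\cap\omega_1\le\Lev(t)$ and some branch $b\in N\cap\dom f_p$ passes through $t$. By closure of $\dom f_p$ under meets, $b\wedge z_0\in\dom f_p$ and $t\le_T b\wedge z_0\le_T z_0$, so by maximality of $u$ we get $b\wedge z_0=z_0$, i.e.\ $z_0\in b\setminus N$; hence Requirement (\ref{condition:2}) applied to $p$ gives $f_p(b)=f_p(z_0)>n$. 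Applied to your extended function with witness $t\in(\dom f\setminus N)\cap b$, the same requirement forces $f(t)=f(b)=f_p(z_0)>n$, contradicting $f(t)=n$. This is not something the closure machinery can repair --- it is a hard violation at the node $t$ itself --- and it occurs in exactly the configurations the forcing is built around (a branch from a low model running through a high-valued node of $\dom f_p$).

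The paper avoids the entire case split by placing the new node at the \emph{bottom} of the tree rather than next to a high-valued node: pick $x\in T_1$ which is not the level-$1$ projection of any element of $\dom f_q$ (possible since $T_1$ is infinite and $\dom f_q$ is finite). Then the only element of $\dom f_q$ below $x$ is the root (value $0\le n$), nothing of $\dom f_q$ lies above $x$, every meet $x\wedge y$ is the root, and $x$ belongs to every model in $\vec M_q$ (level $1$ is contained in each countable elementary submodel and in each transitive one), so Requirement (\ref{condition:2}) is never triggered. Hence $\langle f_q\cup\{\langle x,n\rangle\},\vec M_q\rangle$ is already a condition --- no closure process is needed --- and this works uniformly for every $n$, whether or not $n<\max\range(f_q)$. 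The lesson is that the value $n$ must be realized at a node lying below $N\cap\omega_1$ for all $N\in\vec M_q$ and with nothing of $\dom f_q$ above it; level $1$ gives this for free.
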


\begin{proof}
    Consider the set

    \begin{equation*}
        D_n = \left\{ p\, \big| n\in\range(f_p) \right\},\quad n <\omega
    \end{equation*}

    We show it is dense. Let $q\in \mathbb{S}$ be some condition and let $\pi_1 : T\backslash T_0\rightarrow T_1$ be the projection on the first level of the tree. Since $\left| T_1 \right| = \aleph_0$ we have that $T_1 \backslash \pi_1 \left(\dom f_q\right)\neq\emptyset$. Choose $x\in T_1 \backslash \pi_1 \left(\dom f_q\right) $ then

    \begin{equation*}
        D_n \ni \langle  f_q\cup \langle  x,n \rangle,  \vec{M}_q \rangle = q' \leq q
    \end{equation*}

    Thus, $D_n$ is dense.
\end{proof}

\begin{proposition}
    \label{prop:dom_f_G}
    $\dom f_G = T\cup \left[ T \right]$.
\end{proposition}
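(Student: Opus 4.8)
The plan is to fix an arbitrary $s \in T \cup [T]$ and prove that the set
\[
D_s = \left\{ p \in \mathbb{S} \ \Big| \ s \in \dom f_p \right\}
\]
is dense. The inclusion $\dom f_G \subseteq T \cup [T]$ is immediate, since by Definition \ref{def:forcing_poset} every condition has $f_p \colon T \cup [T] \to \omega$. Once density of $D_s$ is established, genericity of $G$ yields a condition $p \in G \cap D_s$, whence $s \in \dom f_p \subseteq \dom f_G$; as $s$ was arbitrary this gives $T \cup [T] \subseteq \dom f_G$ and hence the proposition.

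To prove density I would fix a condition $q \in \mathbb{S}$ and produce $p \leq q$ with $s \in \dom f_p$, keeping the side condition fixed, $\vec{M}_p = \vec{M}_q$, and only enlarging the function part. The first step is to assign $s$ a value compatible with $f_q$. If $s \in [T]$, weak monotonicity imposes only a lower bound, so I would set $f_p(s) = \max\{ f_q(x) \mid x \in s \cap \dom f_q \}$ (the maximum being over a nonempty set, as the root lies on every branch). If $s \in T$, set $L = \max(\{0\} \cup \{ f_q(x) \mid x \in \dom f_q \cap T,\ x <_T s \})$ and $U = \min\big(\{ f_q(y) \mid y \in \dom f_q \cap T,\ s <_T y \} \cup \{ f_q(b) \mid b \in \dom f_q \cap [T],\ s \in b\}\big)$, with the convention $U = \infty$ when this set is empty; since $\dom f_q$ is closed under meets and $f_q$ is weakly monotone, any $x <_T s <_T y$ (or $x <_T s \in b$) in $\dom f_q$ satisfies $f_q(x) \leq f_q(y)$ (resp.\ $f_q(x) \le f_q(b)$), so $L \leq U$ and I may pick $f_p(s) \in [L,U]$. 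In either case $f' := f_q \cup \{\langle s, f_p(s)\rangle\}$ is a finite weakly monotone partial function.

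The second step is to close $\langle f', \vec{M}_q \rangle$ into a genuine condition. Here I would invoke the closure construction underlying Lemma \ref{lemma:closure-under-branches-meets-and-projections} (together with the companion closure lemma appearing in the proof of Lemma \ref{lem:reasons_for_incompatibility}): starting from the weakly monotone function $f'$, one recursively closes the domain under meets, under projections to the levels $N \cap \omega_1$ for $N \in \vec{M}_q$, and under the cofinal branches demanded by Requirement \ref{condition:2}, assigning each newly added tree node the minimum of the values of the domain elements above it and each newly added branch the maximum of the values of its elements in the domain. By that lemma the process terminates after finitely many steps, preserves weak monotonicity, and yields a function $f_p \supseteq f'$ whose domain is closed under meets and under branches; thus $p = \langle f_p, \vec{M}_q\rangle$ is a condition with $p \leq q$ and $s \in \dom f_p$.

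The main obstacle is the verification that the forced branch values produced by the closure are consistent, i.e.\ that whenever Requirement \ref{condition:2} compels a branch $b \in N$ to enter the domain through some $x \in b \setminus N$, the max-rule assigns $f_p(b) = f_p(x)$ without violating weak monotonicity. This is precisely the delicate bookkeeping already carried out in Lemma \ref{lemma:closure-under-branches-meets-and-projections}, whose key point is that such a branch $b$ is introduced only after its projection $\proj_{N \cap \omega_1}(b)$, so that the ordering of the closure guarantees no larger-valued element of $b$ is enumerated earlier. Consequently the whole construction reduces to that already-established closure machinery, and no new combinatorial difficulty arises beyond ensuring the initial value chosen for $s$ lies in the monotonicity window, which the previous paragraph handles.
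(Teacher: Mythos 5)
Your overall skeleton is the paper's: prove $D_s$ is dense by enlarging only the function part of $q$ and then closing the domain under meets, projections and branches. The genuine gap is in the initial value you give to $s \in T$. You claim any value in $[L,U]$ is admissible, but Requirement \ref{condition:2} does not merely impose the upper bound $f_p(s)\leq f_q(b)$ for branches $b\in\dom f_q$ through $s$: if $b\in N\cap[T]$ for some $N\in\vec{M}_q$ and $\Lev(s)\geq N\cap\omega_1$ (so $s\in b\setminus N$), the requirement forces the \emph{exact equality} $f_p(s)=f_p(b)=f_q(b)$. Concretely, take $\dom f_q=\{\emptyset_{root},b\}$ with $f_q(\emptyset_{root})=0$, $f_q(b)=5$ and $b\in N\in\vec{M}_q$; this is a legitimate condition, and for $s\in b$ of level $\geq N\cap\omega_1$ your window is $[0,5]$, yet any choice other than $5$ makes $\langle f_p,\vec{M}_q\rangle$ fail closure under branches (it would demand $f_p(b)=f_p(s)\neq 5$ while $p\leq q$ forces $f_p(b)=5$). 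No subsequent closure can repair this, since closing the domain only adds new elements and never changes $f_p(s)$. This is precisely hypothesis (3) of the companion lemma inside the proof of Lemma \ref{lem:reasons_for_incompatibility} --- no two elements of $(b\setminus N)\cup\{b\}$ in the domain may receive different values --- which you would have to verify before invoking that closure machinery, and which your free choice can violate.

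The paper avoids this by first passing, via the density of the set $E$ from Lemma \ref{lemma:closure-under-branches-meets-and-projections}, to a $q$ whose domain is closed under projections to the levels $N\cap\omega_1$, then adding $s$ \emph{together with} all projections $\proj_{N\cap\omega_1}(s)$ and assigning each new element $y$ the value $\max\{f_q(w)\mid w\leq_T y\}$. Over a projection-closed domain this maximum automatically equals $f_q(b)$ for every relevant branch $b$ already in $\dom f_q$, because $\proj_{N\cap\omega_1}(b)$ lies in $\dom f_q$ below $s$ and carries the value $f_q(b)$; so the forced equalities hold and the companion lemma applies. Note that the example above also shows that even the minimal choice $f_p(s)=L$ in your scheme fails without that preliminary projection-closure step, so both ingredients (restricting to $E$ and using the max-rule rather than a free choice) are needed.
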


\begin{proof}
    Let $x\in T\cup \left[ T \right]$ and consider the set
\[D_x = \left\{ p\, \big|\, x\in \dom f_p \right\}.\]
    Let us show that $D_x$ is dense. Let $q\in \mathbb{S}$ be some condition. We want to add $x$ to the domain of $f_q$ without changing $\vec{M}_q$ such that the new function is consistent with all the requirements of a condition in Definition \ref{def:forcing_poset}. We are going to construct a function $f$ such that $\langle f, \vec{M}_q\rangle$ is a condition in $D_x$ stronger than $q$. 
    
    Without loss of generality, $\dom f_q$ is closed under projection to $N \cap \omega_1$ for $N \in \vec{M}_q$. By the proof of Lemma \ref{lem:reasons_for_incompatibility} it is sufficient to show that there is a function $f_r$ with domain which is also closed under projections to levels $N \cap \omega_1$, for $N \in \vec{M}_q$, such that $x \in \dom f_r$ and $f_r \cup f_q$ is weakly monotonic. 
    
    So let $\dom f_r$ be the set \[\{x\} \cup \{\proj_{N\cap \omega_1}(x) \mid N \in \vec{M}, \Lev(x) \geq N \cap \omega_1\}.\] 
    Let $f_r(y) = \max \{f_q(w) \mid w \leq_T y\}$. As the root belongs to $\dom f_q$, this set is always non-empty. 
    
    We need to show that there is no $N \in \vec{M}_q$ and a branch $b \in N$ such that there are  
    $y \in \dom f_r$, $w\in \dom f_q$, $y, w \in \{b\} \cup (b\setminus N)$ and $f_r(y)\neq f_q(w)$.  
    Indeed, if $w \in b \setminus N$ then $b \in \dom f_q$ and thus for every $u \in (b \setminus N) \cap \dom f_q$, $f_q(u) = f_q(b) = f_q(w)$. Moreover, $\proj_{N\cap\omega_1}(b)\in\dom f_q$. So  as $y \in b \setminus N$, $\proj_{N\cap \omega_1}(b) \leq y$ and thus $f_r(y) = f_r(b)$.
    
	So, we can apply Lemma \ref{lem:reasons_for_incompatibility} and obtain a function $f$ such that $f\supseteq f_q \cup f_r$ and $\langle f, \vec{M}_q\rangle$ is a condition. 
\end{proof}
\begin{remark}\label{remark:arbitrary-values}
Note that if there is no $w \in \dom f_q$ such that $x \leq w$, then we can set the value of $f_r(x)$ to be arbitrarily high. Indeed, as long as we main the closure under projections and the weak monotonicity of the union, the construction goes through.
\end{remark}
\begin{theorem}
    Let $\mathbb{Q}$ be a forcing notion from the ground model and let us assume that  
    \begin{equation*}
        \mathbb{Q}\Vdash \dot{b}\subset \check{T}\mathrm{\, is\, a\, new\, branch}.
    \end{equation*}
    then
    \begin{equation*}
        \mathbb{S}\times \mathbb{Q}\Vdash \left|\check{\omega}_1^V\right| = \aleph_0.
    \end{equation*}
\end{theorem}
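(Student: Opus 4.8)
The plan is to work in the extension $V[G]$ by $\mathbb{S}\times\mathbb{Q}$, where $G = G_{\mathbb{S}}\times G_{\mathbb{Q}}$, and to extract from the realized branch $b = \dot b^{G_{\mathbb{Q}}}$ together with the generic function $f_G = \bigcup_{\langle f,\vec M\rangle \in G_{\mathbb{S}}} f$ a cofinal $\omega$-sequence in $\omega_1^V$. Write $g = f_G \restriction b$ and, for $\gamma < \omega_1^V$, let $b(\gamma)$ denote the node of $b$ at level $\gamma$. By the weak monotonicity requirement in Definition \ref{def:forcing_poset}, $g$ is a weakly monotone map from the chain $b$ into $\omega$; since $b$ is a cofinal branch of $T$, its order type (computed in $V[G]$) equals the height of $T$, namely $\omega_1^V$, as heights and levels of $T$ are absolute. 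The whole point is to prove that $g$ is \emph{unbounded} in $\omega$. Granting this, define for each $n<\omega$ the ordinal $\beta_n = \min\{\Lev(t) \mid t\in b,\ g(t) > n\}$. These are well defined and weakly increasing, and for any $\gamma < \omega_1^V$, if $g(b(\gamma)) = m$ then $\gamma < \beta_m$ by monotonicity. Thus $\langle \beta_n \mid n < \omega\rangle$ is cofinal in $\omega_1^V$, so $\mathrm{cf}^{V[G]}(\omega_1^V) = \omega$ and $\omega_1^V$ is collapsed to be countable, as desired.

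The heart of the argument is therefore the unboundedness of $g$, which I would establish by a density argument in $\mathbb{S}\times\mathbb{Q}$. Fix $n < \omega$ and set
\[D_n = \{(p,q)\in \mathbb{S}\times\mathbb{Q} \mid q \Vdash_{\mathbb{Q}} \exists t\in \dot b\cap \dom f_p,\ f_p(t) > n\}.\]
It suffices to show $D_n$ is dense, since then genericity produces a node of $b$ with $f_G$-value exceeding $n$, for every $n$. Given an arbitrary $(p,q)$, the key observation is that $\dom f_p\cap [T]$ is a finite set of \emph{ground-model} branches, each distinct from $b$ because $\dot b$ is forced to be new, while $\dom f_p \cap T$ is finite. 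Since $q \Vdash \dot b \neq \check c$ for every $c \in \dom f_p\cap [T]$, I would first extend $q$ finitely many times to some $q_0$ forcing $\dot b$ to diverge from each such $c$ below a fixed countable level, and then extend further to some $q' \leq q_0$ deciding $\dot b(\alpha^*) = t$ for a level $\alpha^*$ chosen above all of these divergence levels and above the finitely many levels of nodes in $\dom f_p\cap T$.

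By the choice of $\alpha^*$, the node $t$ lies strictly above no element of $\dom f_p$: it is too high to be below any node of $\dom f_p\cap T$, and it is off every branch $c\in\dom f_p\cap [T]$ since $b$ has already diverged from $c$. Hence Remark \ref{remark:arbitrary-values}, together with the construction in the proof of Proposition \ref{prop:dom_f_G}, applies and lets me extend $p$ to a condition $p' = \langle f_{p'}, \vec M_p\rangle$ with $t \in \dom f_{p'}$ and $f_{p'}(t) > n$, leaving the model sequence unchanged and preserving all closure requirements of Definition \ref{def:forcing_poset}. Then $(p', q') \leq (p,q)$ lies in $D_n$, proving density.

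I expect the main obstacle to be precisely the verification inside the density argument that $t$ can be added to $\dom f_p$ with an \emph{arbitrarily large} value: this is where the newness of $b$ is indispensable, as it guarantees that no ground-model branch recorded in the condition passes through $t$ (such a branch would otherwise freeze the value of $t$ through closure under branches, Requirement \ref{condition:2}). Once $t$ has no domain element above it, the freedom granted by Remark \ref{remark:arbitrary-values} closes the gap, and the subsequent cofinality computation is routine.
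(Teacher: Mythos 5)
Your overall architecture (density of the sets $D_n$, then reading off a cofinal $\omega$-sequence in $\omega_1^V$ from the monotone unbounded function $f_G\restriction b$) is the same as the paper's, and the cofinality computation at the end is fine. The gap is in the density argument, specifically in the claim that Remark \ref{remark:arbitrary-values} applies once $t$ avoids the finitely many branches in $\dom f_p\cap[T]$ and sits above the nodes of $\dom f_p\cap T$. The obstruction to raising the value of a new node does not come only from elements of $\dom f_p$ lying above it: Requirement \ref{condition:2} of Definition \ref{def:forcing_poset} quantifies over \emph{all} branches $c\in N\cap[T]$ for $N\in\vec{M}_p$, and each countable model $N$ contributes countably many such branches, almost none of which appear in $\dom f_p$. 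If your node $t$ happens to lie on such a $c$ with $\Lev(t)\geq N\cap\omega_1$, then adding $t$ forces $c$ into the domain with $f(c)=f(t)$, and closure under projections puts $\proj_{N\cap\omega_1}(c)\in c\setminus N$ into the domain with $f(\proj_{N\cap\omega_1}(c))=f(c)=f(t)$ as well; if any $w\in\dom f_p$ lies above $\proj_{N\cap\omega_1}(c)$ (which is entirely possible even when nothing in $\dom f_p$ lies above $t$), weak monotonicity then bounds $f(t)$ by $f_p(w)$. So the value of $t$ can be frozen by a branch you never excluded.

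The repair is what the paper does: take $B=(\dom f_{p}\cap[T])\cup\bigcup\{N\cap[T]\mid N\in\vec{M}_{p}\}$, a \emph{countable} set of ground-model branches, and choose the level $\alpha^*$ above $\sup\{\Lev(\dot b\wedge c)\mid c\in B\}$ (and above the levels $N\cap\omega_1$ and the levels of $\dom f_p\cap T$). Since this is now a countable supremum computed in the $\mathbb{Q}$-extension, you must either assume $\mathbb{Q}$ preserves the regularity of $\omega_1$ or dispose of the collapsing case separately (it is vacuous); your finitely-many-extensions argument sidesteps this issue only because it omits the problematic branches. With $t$ chosen at such a level, no branch recorded anywhere in the condition passes through $t$, and only then does Remark \ref{remark:arbitrary-values} legitimately apply.
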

\begin{corollary}
As a simple corollary, taking $\mathbb{Q}$ be the tree $T$ with the reserved order, we conclude that the forcing $\mathbb{S}$ forces the tree $T$ to be non-distributive.
\end{corollary}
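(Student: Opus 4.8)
The plan is to pass to the extension $V[G\times H]$, where $G$ is $\mathbb{S}$-generic and $H$ is $\mathbb{Q}$-generic, and to play the generic function $f_G = \bigcup_{\langle f,\vec M\rangle\in G} f$ against the new branch $b = \dot b^H$. Since $b$ is a cofinal branch through the $\omega_1$-tree $T$, for each level $\alpha < \omega_1$ let $x_\alpha$ be the unique node of $b$ on level $\alpha$; by Weak Monotonicity the map $\alpha \mapsto f_G(x_\alpha)$ is a weakly increasing function from $\omega_1$ into $\omega$. The entire argument reduces to a single claim: this map is \emph{unbounded} in $\omega$. Granting it, I would define $h \colon \omega \to \omega_1$ by letting $h(n)$ be the least $\alpha$ with $f_G(x_\alpha) > n$; unboundedness makes $h$ total, and if $\sup_n h(n)$ were some $\gamma < \omega_1$ then monotonicity would give $f_G(x_\gamma) > n$ for every $n$, which is impossible. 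Hence $h$ is cofinal in $\omega_1^V$, so $\mathrm{cf}(\omega_1^V) = \omega$ holds in $V[G\times H]$ and $\omega_1^V$ becomes a countable ordinal, which is exactly $\mathbb{S}\times\mathbb{Q}\Vdash |\check\omega_1^V| = \aleph_0$.

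So the real content is to show that for every $n < \omega$ the set
\[E_n = \bigl\{(p,q)\in\mathbb{S}\times\mathbb{Q}\ \big|\ \exists x\,(q\Vdash_{\mathbb{Q}}\check x\in\dot b)\wedge x\in\dom f_p\wedge f_p(x) > n\bigr\}\]
is dense. Fix $(p_0,q_0)$ and $n$. First I would exploit that $q_0\Vdash\dot b$ is a \emph{new} branch, hence $q_0\Vdash\dot b\neq\check c$ for each of the finitely many branches $c\in\dom f_{p_0}\cap[T]$. For each such $c$ I extend to decide a level $\gamma_c$ with $\dot b$ already diverged from $c$, i.e. $q\Vdash\dot b(\check\gamma_c)\neq\check c(\check\gamma_c)$; doing this for all $c$ simultaneously yields $q_1\leq q_0$. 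Then, using that $\dot b$ is cofinal, I extend $q_1$ to $q_2$ deciding $\dot b(\check\alpha) = \check x$ for a single node $x$ whose level $\alpha$ exceeds every $\gamma_c$, every $\Lev(y)$ for $y\in\dom f_{p_0}\cap T$, and every $N\cap\omega_1$ for $N\in\vec M_{p_0}$.

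The purpose of this choice of $\alpha$ is that the node $x$, an element of $V$, lies \emph{above} nothing in $\dom f_{p_0}$: no node of $\dom f_{p_0}\cap T$ is $\geq_T x$, since all of them have level $<\alpha$; and $x$ lies on no branch $c\in\dom f_{p_0}\cap[T]$, since $q_2$ forces $\proj_{\gamma_c}(\check x) = \dot b(\check\gamma_c)\neq\check c(\check\gamma_c)$, and this being forced about concrete objects of $V$ means $\proj_{\gamma_c}(x)\neq c(\gamma_c)$ already holds in $V$. I then invoke Remark \ref{remark:arbitrary-values} together with the construction of Proposition \ref{prop:dom_f_G}: as no element of $\dom f_{p_0}$ lies $\geq_T x$, I can extend $p_0$ to a condition $p_1$ with $x\in\dom f_{p_1}$ and $f_{p_1}(x)$ chosen arbitrarily large, in particular $>n$. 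Then $(p_1,q_2)\leq(p_0,q_0)$ lies in $E_n$, establishing density.

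I expect the main obstacle to be exactly this density step: pinning down the level $\alpha$ and verifying that $x$ sits at the very top of $\dom f_{p_0}$ in the tree order, which is what licenses the freedom of Remark \ref{remark:arbitrary-values} to push the value of $f$ along $b$ as high as we wish. Once this is in place the rest is routine — the product structure does not interfere, since the extension of $q$ uses only that $\dot b$ is forced new and cofinal, and an unbounded weakly monotone map $\omega_1\to\omega$ manufactures a countable cofinal sequence in $\omega_1^V$ by the elementary argument of the first paragraph.
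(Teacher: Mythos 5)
Your overall strategy is the same as the paper's: reduce everything to the density of the sets $D_n$ (your $E_n$), and then read off the collapse of $\omega_1^V$ from the unbounded weakly monotone function $f_G\restriction b$. The first and last paragraphs are fine. The problem is in the density step, and it is exactly at the point you flag as the main obstacle. You choose $\alpha$ so that $x=\dot b(\alpha)$ avoids the finitely many branches in $\dom f_{p_0}\cap[T]$ and lies above the levels of all old nodes and all ordinals $N\cap\omega_1$ for $N\in\vec M_{p_0}$. This verifies the literal hypothesis of Remark \ref{remark:arbitrary-values} (no $w\in\dom f_{p_0}$ with $x\leq w$), but it does not rule out that $x$ lies on some branch $c\in N\cap[T]$ for a model $N\in\vec M_{p_0}$ with $c\notin\dom f_{p_0}$. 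Since $\Lev(x)>N\cap\omega_1$, we then have $x\in c\setminus N$, and closure under branches (condition (\ref{condition:2}) of Definition \ref{def:forcing_poset}) forces $c$ into the domain with $f(c)=f(x)$. Now closure under meets (or, in the $E$-closed setting you invoke, the projection $u=\proj_{N\cap\omega_1}(x)\in c\setminus N$) ties $f(c)$ to the old part of the condition: if there is an old node $v\in\dom f_{p_0}$ with $u\leq_T v$ and $v\notin c$, then $c\wedge v\in c\setminus N$ must enter the domain with $f(c\wedge v)\leq f_{p_0}(v)$ by weak monotonicity, and condition (\ref{condition:2}) then forces $f(x)=f(c)=f(c\wedge v)\leq f_{p_0}(v)$. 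So $f(x)$ cannot be pushed above $n$ in this configuration, and no condition in $E_n$ extends $(p_0,q_2)$ with this choice of $x$. Nothing in your constraints on $\alpha$ excludes this scenario.

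The repair is what the paper's proof does: take the set of branches to be avoided to be $B=(\dom f_{p_1}\cap[T])\cup\bigcup\{N\cap[T]\mid N\in\vec M_{p_1}\}$, which is countable (each $N$ is countable). Since $\dot b$ is forced to be new, it diverges from each $c\in B$ at some countable level, and to bound these countably many divergence levels by a single $\beta<\omega_1$ you need that $\mathbb{Q}$ preserves the regularity of $\omega_1$ --- which can be assumed without loss of generality, since otherwise there is nothing to prove. With $z=\dot b(\beta)$ for such a $\beta$, no element of the ($E$-closed) extension of $p_1$ lies above $z$, including the branches imported from the side-condition models, and only then does Remark \ref{remark:arbitrary-values} legitimately allow setting $f(z)>n$. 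Your finitely-many-$\gamma_c$ bookkeeping avoids the appeal to $\omega_1$-preservation, but only because it omits the countably many branches that actually cause the trouble.
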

\begin{proof}
    Let $\mathbb{Q}$ be a forcing such that 
    \begin{equation*}
        \mathbb{Q}\Vdash \dot{b}\subset \check{T}\text{ is a new branch,}
    \end{equation*}
    and let us assume that $\mathbb{Q}$ does not collapse $\omega_1$ in $V$, as otherwise we have nothing to do. 
    Consider the set
    \begin{equation*}
        D_n = \left\{ (p,q)\in \mathbb{S}\times\mathbb{Q}\, \Big|\, q\Vdash \check{x}\in \dot{b}\wedge p\Vdash \dot{f}_G(\check{x}) >\check{n} \right\}.
    \end{equation*}
    Let us show that it is dense in $\mathbb{S}\times \mathbb{Q}$. 
    Let $(p_1,q_1)\in \mathbb{S}\times \mathbb{Q}$ be a condition. 

	Let $B = (\dom f_{p_1} \cap [T]) \cup \bigcup \{N \cap [T] \mid N \in \vec{M}_{p_1}\}$ --- this is a countable collection of branches. 
    Since $q_1$ forces that $\dot{b}$ is a new branch and that $\omega_1$ is still regular, there is an ordinal $\beta < \omega_1$ (depending on the generic filter) such that for every $b \in B$, $\level(\dot{b} \wedge b) < \beta$ and moreover $\beta > \level(x)$ for every $x \in \dom f_{p_1}$. Let $q_2 \leq q_1$ be a condition that decides the value of the ordinal $\beta$ and decide the value of the unique $z \in \dot{b} \cap T_{\beta}$. Extend $q_2$ to a condition $q\in E$ --- the dense set of conditions with domain closed under projections. 
    
   Note that there is no element in $\dom f_q$ which is above $z$, as such an element must be either a branch from one of the models or on one of those branches. 
   
   So, by Remark \ref{remark:arbitrary-values}, we can set the value of $z$ to be any natural number which is larger than $\max \{f_{q}(y) \mid y \leq_T z\}$, and in particular larger than $n$. Let $q'$ be a strengthening of $q$ with $f_{q'}(z) > n$. The obtained condition $\langle p_2, q'\rangle \in D_n$. 

    Finally, let $G\subset \mathbb{S}\times \mathbb{Q}$ be a generic filter and let $f_G$ be the generic function and $b_G$ the generic branch. Since $D_n\cap G\neq \emptyset$ for all $n\in \mathbb{N}$ we have

    \begin{equation*}
        \sup\range(f_G\restriction b_G) = \omega
    \end{equation*}

    $f_G$ is a monotone unbounded function between $b_G$ (which is isomorphic to $\omega_1^V$) and $\omega$. Thus, $\left|\omega_1^V\right| = \aleph_0$.

\end{proof}   

\section{Open Questions}

In this work we have managed to construct a forcing-notion that forces a limitation on the ability to add branches to a Kurepa tree. We showed that no forcing from the ground model is able to add a new branch without destroying the tree. But, there can be new forcing-notions added by $\mathbb{S}$, can they add branches to $T$ without damaging it? When we have tried to prove this for an iteration $\mathbb{S} \ast \dot{\mathbb{Q}}$ instead of a product, we failed since our conditions can not determine the branch without also determining the function $f_G$.

A natural question to ask is

\begin{question}
    \label{que:seal_completely}
    Is there a forcing-notion that seals a Kurepa tree $T$ completely to all forcing notions in $V[G]$?
\end{question}

If there is a Woodin cardinal, then the stationary tower forcing with critical point $\omega_2$ will always add a new branch without collapsing $\aleph_1$ (even though $\aleph_2$ will be collapsed). So, it seems like a different method of sealing is needed in order to work in $\mathrm{ZFC}$.

If we restrict ourselves to forcing notions of cardinality $\aleph_1$, as in Section \ref{section:distributive-trees}, we might be able to get a simpler result, by iterating the forcing from this paper.
\begin{question}
Is it possible to iterate the forcing notion from Section \ref{section:sealing-trees-forcing}, without collapsing cardinals, and obtain a model in which there are Kurepa trees and no forcing of cardinality $\aleph_1$ can introduce a branch to a Kurepa tree without collapsing $\aleph_1$?
\end{question}
Note that specialization posets such as \cite{GolshaniShelah21} are incompatible with the existences of Kurepa trees.



\begin{thebibliography}{10}

\bibitem{Uri}
Uri Abraham.
\newblock {\em Proper Forcing}, pages 333--394.
\newblock Springer Netherlands, Dordrecht, 2010.

\bibitem{BaumgartnerMalitzReinhardt}
J.~Baumgartner, J.~Malitz, and W.~Reinhardt.
\newblock Embedding trees in the rationals.
\newblock {\em Proc. Nat. Acad. Sci. U.S.A.}, 67:1748--1753, 1970.

\bibitem{CummingsHandbook}
James Cummings.
\newblock Iterated forcing and elementary embeddings.
\newblock In {\em Handbook of set theory. {V}ols. 1, 2, 3}, pages 775--883.
  Springer, Dordrecht, 2010.

\bibitem{CummingsForemanMagidor}
James Cummings, Matthew Foreman, and Menachem Magidor.
\newblock Squares, scales and stationary reflection.
\newblock {\em J. Math. Log.}, 1(1):35--98, 2001.

\bibitem{Kurepa}
Kurepa Duro.
\newblock Ensembles ordonnés et ramifiés.
\newblock {\em Univ. Belgrad}, 4:1--138, 1935.

\bibitem{GolshaniShelah21}
Mohammad Golshani and Saharon Shelah.
\newblock Specializing trees and answer to a question of {W}illiams.
\newblock {\em J. Math. Log.}, 21(1):Paper No. 2050023, 20, 2021.

\bibitem{HayutMuller}
Yair Hayut and Sandra M{\"u}ller.
\newblock Perfect subtree property for weakly compact cardinals.
\newblock {\em Israel Journal of Mathematics}, pages 1--22, 2022.

\bibitem{Jech}
Thomas Jech.
\newblock {\em Set theory}.
\newblock Springer Monographs in Mathematics. Springer-Verlag, Berlin, 2003.
\newblock The third millennium edition, revised and expanded.

\bibitem{JechSolovay}
Thomas~J. Jech.
\newblock Trees.
\newblock {\em J. Symbolic Logic}, 36:1--14, 1971.

\bibitem{Krueger2}
John Krueger.
\newblock Forcing with adequate sets of models as side conditions.
\newblock {\em MLQ Math. Log. Q.}, 63(1-2):124--149, 2017.

\bibitem{Krueger}
John Krueger.
\newblock The approachability ideal without a maximal set.
\newblock {\em Ann. Pure Appl. Logic}, 170(3):297--382, 2019.

\bibitem{Kunen}
Kenneth Kunen.
\newblock {\em Set theory}, volume~34 of {\em Studies in Logic (London)}.
\newblock College Publications, London, 2011.

\bibitem{Mitchell}
William~J Mitchell.
\newblock $i[\omega_2]$ can be the nonstationary ideal on
  $\mathrm{Cof}(\omega_1)$.
\newblock {\em Trans. Amer. Math. Soc}, 361(2):561--601, 2009.

\bibitem{Neeman}
Itay Neeman.
\newblock Forcing with sequences of models of two types.
\newblock {\em Notre Dame Journal of Formal Logic}, 55(2):265 -- 298, 2014.

\bibitem{Neeman2}
Itay Neeman.
\newblock Two applications of finite side conditions at {$\omega_2$}.
\newblock {\em Arch. Math. Logic}, 56(7-8):983--1036, 2017.

\bibitem{Poor}
M\'{a}rk Po\'{o}r and Saharon Shelah.
\newblock Characterizing the spectra of cardinalities of branches of {K}urepa
  trees.
\newblock {\em Pacific J. Math.}, 311(2):423--453, 2021.

\end{thebibliography}

\end{document}